\font\smallcurly=rsfs10 scaled 800
\newcommand{\hyper}[5]
       {{}_{#1} F_{#2} \!\left[
           \begin{array}{l}
         #3;\\#4;
           \end{array}#5\right]
       } 
\newcommand{\ee}{\mathrm{e}}
\newcommand{\ii}{{\mathrm i}}
\newcommand{\DDD}{\DD{D}}
\newcommand{\EEE}{\DD{E}}
\newcommand{\PPP}{\DD{P}}
\newcommand{\PP}{\CC{P}}
\newcommand{\LL}{\CC{L}}
\newcommand{\G}{\CC{\Gamma}}
\newcommand{\HHH}[1]{{\CC{H}^{\hspace*{-6pt}\raisebox{4pt}{\footnotesize$\circ$}}_2}^{\hspace*{-4pt}\raisebox{-3pt}{$\scriptstyle #1$}}}
\newcommand{\lA}{\nearrow}
\newcommand{\rA}{\searrow}
\pgfplotsset{compat=1.12}
\title{Orthogonal systems for time-dependent spectral methods}
\author{Arieh Iserles\\Department of Applied Mathematics and Theoretical Physics\\University of Cambridge\\Centre for Mathematical Sciences\\Wilberforce Rd, Cambridge CB3 0WA\\United Kingdom\\[3pt]email: \texttt{ai@maths.cam.ac.uk}}
\begin{document}
\maketitle
\tableofcontents

\begin{abstract}
  This paper is concerned with orthonormal systems in real intervals, given with zero Dirichlet boundary conditions. More specifically, our interest is in systems with a skew-symmetric differentiation matrix (this excludes  orthonormal polynomials). We consider a simple construction of such systems and pursue its ramifications. In general, given any $\CC{C}^1(a,b)$ weight function such that $w(a)=w(b)=0$, we can generate an orthonormal system with a skew-symmetric differentiation matrix. Except for the case $a=-\infty$, $b=+\infty$, only a limited number of powers of that matrix is bounded and we establish a connection between properties of the weight function and boundedness. In particular, we examine in detail two weight functions: the Laguerre weight function $x^\alpha \ee^{-x}$ for $x>0$ and $\alpha>0$ and the ultraspherical weight function $(1-x^2)^\alpha$, $x\in(-1,1)$, $\alpha>0$, and establish their properties. Both weights share a most welcome feature of {\em separability,\/} which allows for fast computation. The quality of approximation is highly sensitive to the choice of $\alpha$ and we discuss how to choose optimally this parameter, depending on the number of zero boundary conditions.
\end{abstract}

\noindent {\bf AMS Mathematical Subject Classification:} Primary 41A30, Secondary 41A10, 65M70.

\vspace{5pt}
\noindent{\bf Keywords:} Orthogonal polynomials, differentiation matrices, special functions, spectral methods.

\section{Introduction and motivation}

This work is motivated by spectral methods for time-dependent partial differential equations (PDEs) of the form
\begin{equation}
  \label{eq:1.1}
  \frac{\partial u}{\partial t}=\mathcal{L}u+f(x,u),\qquad t\geq0,\quad x\in\Omega,
\end{equation}
where $\mathcal{L}$ is a well-posed linear operator and $\Omega\subseteq\BB{R}^d$, given with an initial condition for $u(x,0)$ and appropriate boundary conditions on $\partial\Omega$. Standard examples are $\mathcal{L}=\Delta$ with $f\equiv0$ (the {\em diffusion\/} equation) or $f$ a cubic polynomial in $u$ with real zeros (the {\em Fitzhugh--Nagumo\/} equation) and $\mathcal{L}=\ii \Delta$ with either $f=-\ii V(x)$ (the {\em linear Schr\"odinger\/} equation) or $f=-\ii\lambda|u|^2u$ (the {\em nonlinear Schr\"odinger\/} equation with standard cubic nonlinearity). 

In this paper we are concerned by spectral methods applied in tandem with a splitting approach. As an example, we commence by approximating locally the solution of \R{eq:1.1} using the Strang splitting,
\begin{equation}
  \label{eq:1.2}
  u^{n+1}=\ee^{\frac12\Delta t \mathcal{L}} \ee^{\Delta t f} \ee^{\frac12\Delta t \mathcal{L}} u^n,\qquad n\geq0,
\end{equation}
where $u^n(x)$ is an approximation to $u(x,n\Delta t)$. Here $\ee^{t\mathcal{L}}v$ is a shorthand for a numerical solution at $t$ of $\partial u/\partial t=\mathcal{L}u$, $u(0)=v$, while $\ee^{tf}v$ denotes a numerical solution of the ordinary differential equation (ODE) $\D u/\D t=f(x,u)$, $u(0)=v$. The splitting \R{eq:1.2}, which incurs a local error of $\O{(\Delta t)^3}$, is but one example of {\em operatorial splittings\/} \cite{bader14eas,blanes20hoe,mclachlan02sm} and is intended here to illustrate a general point, namely that the solution of `complicated' PDEs can be reduced to the solution of `simple' PDEs and  ODEs. Once done correctly, this procedure is consistent with  eventual quality of the solution,  concerning accuracy and stability alike.

Another benefit of \R{eq:1.2} and of similar splittings is that it is consistent with conservation of the $\CC{L}_2$ energy. Many {\em dispersive\/} equations, e.g.\ Schr\"odinger (linear or nonlinear), Gross--Pitaevskii, Dirac, Klein--Gordon and Korteveg--De Vries, conserve the $\CC{L}_2$ norm of the solution. This  often represents a highly significant physical feature and it is vital to respect it under discretisation. (Note that conservation of $\CC{L}_2$ norm automatically implies numerical stability.) Because of the special form of \R{eq:1.2} (and of similar splittings) the overall numerical scheme preserves $\CC{L}_2$ energy if both the discretisations of $\ee^{ t\mathcal{L}}$ and $\ee^{tf}$ do so. Insofar as $\ee^{tf}$ is concerned, we can use the very extensive and robust existing theory \cite{hairer06gni}, e.g.\ use a symplectic method (which automatically also preserves the $\CC{L}_2$ norm). It is more challenging to ensure that $\|\ee^{t\mathcal{L}}v\|_2=\|v\|_2$ for every $v$, in other words that the discretisation of $\ee^{t\mathcal{L}}$ is {\em unitary.\/} 

In this paper we are concerned with spectral methods for time-dependent problems \cite{canuto06sm,hesthaven07std,trefethen00smm}. In a nutshell, we commence from a set $\Phi=\{\varphi_n\}_{n\in\bb{Z}_+}$, where each $\varphi_n$ is defined in $\Omega$ and endowed with appropriate regularity, which are  orthonormal in the standard $\CC{L}_2$ inner product,
\begin{displaymath}
  \int_\Omega \varphi_m(x)\varphi_n(x) \D x=\delta_{m,n},\qquad m,n\in\BB{Z}_+,
\end{displaymath}
and complete in $\CC{L}_2(\Omega)$, and expand a solution in the basis $\Phi$,
\begin{displaymath}
  u(x,t)=\sum_{n=0}^\infty u_n(t) \varphi_n(x),
\end{displaymath}
where the $u_n(0)$s are determined by expanding the initial condition $u_0$, while the $u_n(t)$s, $t>0$ are typically evolved by  Galerkin conditions, which for \R{eq:1.1} read
\begin{displaymath}
  u_m'(t)=\sum_{n=0}^\infty u_n(t) \langle \mathcal{L}\varphi_n+f(\,\cdot\,,\varphi_n),\varphi_m\rangle,\qquad m\in\BB{Z}_+.
\end{displaymath}
In a practical method we truncate the expansion and the range of $m$, thereby obtaining a finite-dimensional linear system of ODEs. 

Substantive advantage of spectral methods is that expansions in orthonormal bases typically converge very rapidly indeed: for example, orthogonal polynomials converge in a finite interval to analytic (in the interval and its neighbourhood) functions at an exponential rate. Therefore, the number of degrees of freedom, compared to the more usual finite difference or finite element methods, is substantially small. While this is not the entire truth -- finite differences and finite elements produce sparse linear algebraic systems while spectral elements yield dense matrices which sometimes can be also ill conditioned and , moreover, expanding a function in an orthonormal basis can be potentially costly -- spectral methods are often the approach of choice in numerical computations.  In the specific context of time-dependent problems, however, naive spectral methods are unstable \cite{hesthaven07std}. This motivates us to consider the major concept of a {\em differentiation matrix.\/}

In the sequel we restrict our narrative to the univariate case, $\Omega\subseteq\BB{R}$, for a number of reasons. Firstly, surprisingly, even the univariate case  (as we hope to persuade the reader) is dramatically incomplete. Secondly, it lays the foundations to a multivariate case, whether by tensorial extension to parallelepipeds or by more advanced means which we intend to explore in a future paper. 

The set $\Phi$ being a basis of $\CC{L}_2(\Omega)\cap\CC{C}^1(\Omega)$, any function therein can be expressed as a linear combination of the $\varphi_n$s, and this is particularly true with regards to the derivatives $\varphi_m'$. This yields a linear map represented by the infinite-dimensional matrix $\DDD$ such that
\begin{displaymath}
  \DDD_{m,n}=\int_\Omega \varphi_m'(x) \varphi_n(x)\D x,\qquad m,n\in\BB{Z}_+.
\end{displaymath}

It is very simple to prove by integration by parts that the differentiation operator $\MM{D}=\partial/\partial x$ is skew Hermitian in the following three configurations of boundary conditions:
\begin{enumerate}
\item The {\em torus} $\Omega=\BB{T}$ (i.e.\ periodic boundary conditions); 
\item The {\em Cauchy problem:\/} $\Omega=\BB{R}$; and
\item Zero Dirichlet boundary conditions on the boundary of $\Omega\subset\BB{R}$.
\end{enumerate}
In that case $\|\ee^{t\Mm{D}}\|=1$ and, $\MM{D}^2$ being Hermitian and negative semidefinite, $\|\ee^{t\Mm{D}^2}\|\leq1$. More generally, $\MM{D}^{2\ell+1}$ is skew Hermitian and $(-1)^{\ell-1} \MM{D}^{2\ell}$ negative semidefinite for all $\ell\in\BB{Z}_+$. Consequently, once $\mathcal{L}=\sum_{\ell=1}^M a_\ell \MM{D}^{\ell}$, where $(-1)^{\ell-1} a_{2\ell}\geq0$, it is trivial to prove that  $\langle \mathcal{L}u,u\rangle\leq0$ for every $u$ in the underlying Hilbert space. 

This feature is retained by a spectral method, provided that $\DDD$ is skew Hermitian, as is its $(N+1)\times(N+1)$ section $\DDD_N$. In the context of the PDE \R{eq:1.1}, it thus follows that, letting  $\mathcal{L}_N=\sum_{\ell=1}^M a_\ell \DDD_N^{\,\ell}$, we have $w^*\mathcal{L}_N w\leq0$ for all $w\in\BB{C}^{M+1}$. It follows, for example, that the $\CC{L}_2$ energy is conserved for $\mathcal{L}=\MM{D}$ (the Schr\"odinger case) and it dissipates for $\mathcal{L}=\MM{D}^2$ (the diffusion equation case). In both cases numerical stability comes in the wash.

The obvious choice of an orthonormal system is a set of orthogonal polynomials -- unless we use (possibly shifted) Legendre polynomials, this means replacing the $\CC{L}_2$ inner product by another, defined by the orthogonality weight function -- but it is clear that this produces a lower-triangular $\DDD$. While there are nontrivial way round it \cite{olver20fau}, there is strong motivation to consider alternative orthonormal systems. 

The periodic case -- without loss of generality, $\Omega=[-\pi,\pi]$ with periodic boundary conditions -- is obvious: we let $\Phi=\{\ee^{\ii nx}\}_{n\in\bb{Z}}$, the Fourier basis. An added bonus is fast expansion by means of a Fast Fourier Transform of any $\CC{L}_2[-\pi,\pi]\cap\CC{C}_{\CC{per}}[-\pi,\pi]$ function in the underlying basis. This is the paradigmatic case whereby a spectral method has few competitors. 

The Cauchy case $\Omega=(-\infty,\infty)$ has been a subject for an extensive recent study \cite{iserles19oss,iserles20for,iserles21daf,iserles21fco}. In particular, all orthonormal systems $\Phi$ such that $\DDD$ is skew Hermitian {\em and\/} tridiagonal have been completely characterised. Specifically, they are in a one-to-one relationship with Borel measures, supported on the entire real line. Let $\D\mu(x)=w(x)\D x$ be such a measure ($w$ might be a generalised function) and $\PPP=\{p_n\}_{n\in\bb{Z}_+}$ the underlying set of orthonormal polynomials. It is elementary that $\PPP$ obeys a three-term recurrence relation
\begin{equation}
  \label{eq:1.3}
  \beta_n p_{n+1}(x)=(x+\alpha_n)p_n(x)-\beta_{n-1}p_{n-1}(x),\qquad n\in\BB{Z}_+,
\end{equation}
where $\beta_{-1}=0$, $\alpha_n\in\BB{R}$ and $\beta_n>0$ for $n\in\BB{Z}_+$. Inverse Fourier transforming $\{w^{1/2} p_n\}_{n\in\bb{Z}_+}$ and multiplying the $n$ term by $\ii^n$, we obtain an orthonormal set $\Psi$, dense in $\CC{L}_2(\BB{R})$ and such that
\begin{displaymath}
  \psi_n'=-\beta_{n-1}\psi_{n-1}+\ii\alpha_n \psi_n+\beta_n\psi_{n+1},\qquad n\in\BB{Z}_+.
\end{displaymath}
Therefore $\DDD$ is skew Hermitian (skew symmetric if $\alpha_n\equiv0$, which is the case once $w$ is an even function) and tridiagonal, while  orthonormality follows by the Plancherel theorem. Tridiagonality is a valuable feature because it is easy to manipulate $\DDD$ (e.g.\ multiply $\DDD_N$ by a vector or approximate $\exp(t\DDD_N)$) and the powers of the infinite-dimensional matrix $\DDD$ (approximating higher derivatives) remain bounded.

This leaves us with the third -- and most difficult -- case, namely zero Dirichlet boundary conditions.\footnote{It is elementary to reduce nonzero Dirichlet conditions to zero ones by reformulating the PDE for $u^{\CC{new}}=u-\rho$, where $\rho$ is any sufficiently regular function that obeys the right Dirichlet boundary conditions on the boundary.} This is the subject of this paper.

A natural inclination is to extend the Fourier-transform-based theory from $(-\infty,\infty)$ to, say, $(-1,1)$. This can be done in one of two obvious ways and, unfortunately, both fail. The first is to choose a measure $\D\mu$ supported by $(-1,1)$, but this leads again to $\Phi$ supported on the entire real line, the only difference being that in this case the closure of $\Phi$ is not $\CC{L}_2(-\infty,\infty)$ but a Paley--Wiener space \cite{iserles19oss}. Another possibility is to abandon altogether the Fourier route and alternatively commence by specifying a $\varphi_0$,  subsequently determining the $\varphi_n$s for $n\in\BB{N}$ and the matrix $\DDD$ consistently with both orthogonality and tridiagonality. A forthcoming paper demonstrates how to do this algorithmically. However, given $\varphi'=\DDD\,\varphi$, it follows by induction that $\varphi^{(s)}=\DDD^{\,s}\varphi$ for all $s\in\BB{Z}_+$. Consistency with zero Dirichlet boundary conditions, though, requires $\varphi_n(\pm1)\equiv0$, and this implies that $\varphi_n^{(s)}(\pm1)\equiv0$ for all $n,s\in\BB{Z}_+$. If $\varphi_0$ is analytic in $(-1,1)$, this means that it necessarily must have an essential singularity at the endpoints. Intuitively, this is bad news, and this is confirmed by numerical experiments that indicate that the $\varphi_n$s develop boundary layers and wild oscillations near $\pm1$ and their approximation power is nil. 

Both ideas above fall short and the current paper embarks on an altogether different approach, abandoning tridiagonality and the Fourier route altogether. Note that the existence of an essential singularity at the endpoints hinged on the fact that all powers of the infinite matrix $\DDD$ are bounded. This is obvious once $\DDD$ is tridigonal (or, more generally, bounded), hence our main idea is to choose an orthonormal set $\DDD$ such that $\DDD^{\,s+1}$ blows up for some $s\in\BB{N}$. At the same time, we wish to retain a major blessing of tridiagonality, namely that $\DDD_N w$ can be computed in $\O{N}$ operations for any $w\in\BB{C}^{N+1}$.  

The main idea underlying this paper is exceedingly simple: given a measure $\D\mu=w\D x$, where $w\in\BB{C}^1(a,b)$, and an underlying set $\PPP=\{p_n\}_{n\in\bb{Z}_+}$ of orthonormal polynomials, we set
\begin{equation}
  \label{eq:1.4}
  \varphi_n(x)=\sqrt{w(x)}p_n(x),\qquad x\in(a,b).
\end{equation}
It follows at once that $\Phi$ is orthonormal with respect to $\CC{L}_2(a,b)$ and it is easy to determine conditions so that $\varphi_n(a)=\varphi_n(b)=0$ for all $n\in\BB{Z}_+$. It is not difficult to specify the conditions on $w$ for skew symmetry of $\DDD$. However, the narrative becomes more complicated once we seek a system such that $\DDD^{\,k}$ is bounded for $k=1,\ldots,s$ and blows up for $k=s+1$. Likewise, it is considerably more challenging to identify systems $\Phi$ that allow for fast computation of $\DDD_N w$ for $w\in\BB{C}^{N+1}$.  Note in passing that $\varphi_n=(-\ii)^n \hat{\psi}_n$: up to rescaling, $\Phi$ consists of Fourier transforms of $\Psi$: we will further elaborate this point in Subsection~4.3.

In Section 2 we introduce the functions \R{eq:1.4} in a more rigorous setting of Sobolev spaces and explore general properties of their differentiation matrices. Section~3 is devoted to two families of weight functions, namely the {\em Laguerre family\/} $w_\alpha(x)=x^\alpha \ee^{-x}\chi_{(0,\infty)}(x)$ and the {\em ultraspherical family\/} $w_\alpha(x)=(1-x^2)^\alpha \chi_{(-1,1)}(x)$. We prove that both families have a {\em separable differentiation matrix.\/} This feature (put to a good use in Section~4) is very special -- indeed, there are good reasons to conjecture that these two families are the only weights with this feature. We present a detailed example of two orthogonal families, {\em generalised Hermite\/} weights $w_\mu(x)=|x|^{2\mu}\ee^{-x^2}$ and {\em Konoplev weights\/} $w_{\alpha,\beta}(x)=|x|^{2\beta+1}(1-x^2)^\alpha \chi_{(-1,1)}(x)$, and prove that their differentiation matrices cannot be separable unless (for Konoplev weights) $\beta=-\frac12$ and the weight reduces to ultraspherical. Finally, in Section~4 we demonstrate how separability of the differentiation matrix can be utilised for fast multiplication of $\DDD_N w$, $w\in\BB{R}^{N+1}$, in $\O{N}$ operations.

The original idea to consider functions of the form \R{eq:1.4} in the specific case of {\em Freud weights\/} has been considered first by \citeasnoun{luong23apw}, who demonstrated that in this specific case $\DDD$ is a skew-symmetric, banded matrix with bandwidth seven. This was a serendipiteous choice: in Section~2 we prove that the only weights that produce a banded matrix in the setting of \R{eq:1.4} are generalised Freud weights!

\setcounter{equation}{0}
\setcounter{figure}{0}
\section{W-functions}

\subsection{The definition and few of its consequences}

Let $(a,b)$ be a non-empty real interval, $-\infty\leq a<b\leq \infty$, and $s\in\BB{N}\cup\{\infty\}$. We denote by $\HHH{s}[a,b]$ the Sobolev space of $\CC{H}_2^s[a,b]$ functions $f$ such that
\begin{displaymath}
  f^{(k)}(a)=f^{(k)}(b)=0,\qquad k=0,\ldots,s-1,
\end{displaymath}
(note that $\CC{C}^{s-1}[a,b]\subset \CC{H}_2^s[a,b]$, therefore the derivatives are well defined) equipped with the inner product
\begin{displaymath}
  \langle f,g\rangle_s=\sum_{k=0}^{s} \int_a^b f^{(k)}(x) g^{(k)}(x)\D x.
\end{displaymath}

A {\em weight function\/} $w\in\CC{L}_2(a,b)\cap\CC{C}^1(a,b)$ is a positive function with all its moments
\begin{displaymath}
  \mu_k=\int_a^b x^k w(x)\D x,\qquad k\in\BB{Z}_+,
\end{displaymath}
bounded. Given a weight functions, we can define (e.g.\ using a Gram--Schmidt process) a set of orthonormal polynomials $\PPP=\{p_n\}_{n\in\bb{Z}_+}$ such that
\begin{equation}
  \label{eq:2.1}
  \int_a^b p_m(x)p_n(x) w(x)\D x=\delta_{m,n},\qquad m,n\in\BB{Z}_+.
\end{equation}
Such a set is unique once we require, for example, that the coefficient of $x^n$ in $p_n$ is always positive. We say that $\varphi_n$ is the $n$th {\em W-function\/} once
\begin{displaymath}
  \varphi_n(x)=\sqrt{w(x)} p_n(x),\qquad n\in\BB{Z}_+,
\end{displaymath}
and let $\Phi=\{\varphi_n\}_{n\in\bb{Z}_+}$. It follows at once from \R{eq:2.1} that $\Phi$ is an orthonormal set with respect to the standard $\CC{L}_2$ inner product. 

\vspace{6pt}
\noindent{\bf Remark 1} The functions $\varphi_n$ inherit some features of orthonormal polynomials, in particular they obey the same three-term recurrence relation. However, the expansion coefficients of an arbitrary function are different:
\begin{Eqnarray*}
  f&\sim& \sum_{n=0}^\infty \hat{f}_n^P p_n,\quad \hat{f}_n^P=\int_a^b f(x) p_n(x) w(x)\D x,\qquad f\in\CC{L}_2((a,b),w\D x),\\
  f&\sim&\sum_{n=0}^\infty \hat{f}_n^\Phi \varphi_n,\quad \hat{f}_n^\Phi=\int_a^b f(x) p_n(x)\sqrt{w(x)}\D x,\qquad f\in\CC{L}_2(a,b).
\end{Eqnarray*}
Moreover, while convergence theory of orthogonal polynomials is well understood, at any rate in compact intervals, the condition for convergence of the $\hat{f}_n^\Phi$s are currently a matter for further research.

\vspace{6pt}
\noindent{\bf Remark 2} An important difference between $\PPP$ and $\Phi$ is that, while the $p_n$ are polynomials, hence analytic functions, the W-functions carry over potential singularities of the weight function. For example, for the Chebyshev weight function $w(x)=(1-x^2)^{-1/2}\chi_{(-1,1)}(x)$ the $\varphi_n$s have weak singularity at the endpoints $\pm1$, while their derivatives possess strong singularity there.

\vspace{6pt}
We let $\DDD$ stand for the infinite-dimensional {\em differentiation matrix\/}
\begin{equation}
  \label{eq:2.2}
  \DDD_{m,n}=\int_a^b \varphi_m'(x)\varphi_n(x)\D x,\qquad m,n\in\BB{Z}_+.
\end{equation}
We say that $w$ is of {\em index\/} $s\in\BB{N}\cup\{\infty\}$ and denote this by $\CC{ind}\, w=s$ if $\DDD^{\,k}$ is bounded for $k=1,\ldots,s$, while $\DDD^{\,s+1}$ is unbounded.

\begin{lemma}
  $\DDD$ is skew-symmetric if and only if $w(a)=w(b)=0$.
\end{lemma}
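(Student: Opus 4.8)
The plan is to reduce the whole statement to a single integration-by-parts identity. Writing out \R{eq:2.2} and integrating by parts once, I would obtain
$$\DDD_{m,n}+\DDD_{n,m}=\int_a^b\left(\varphi_m'\varphi_n+\varphi_m\varphi_n'\right)\D x=\bigl[\varphi_m(x)\varphi_n(x)\bigr]_{x=a}^{x=b}.$$
Since $\varphi_m\varphi_n=\sqrt{w}\,p_m\cdot\sqrt{w}\,p_n=w\,p_mp_n$, the right-hand side equals $w(b)p_m(b)p_n(b)-w(a)p_m(a)p_n(a)$. Hence $\DDD$ is skew-symmetric if and only if this boundary term vanishes for every pair $(m,n)$, and the lemma becomes entirely a statement about the boundary behaviour of $w\,p_mp_n$.

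The forward implication is the easy half. If $w(a)=w(b)=0$ then $\varphi_n(a)=\sqrt{w(a)}\,p_n(a)=0$, and likewise at $b$: at a finite endpoint the $p_n$ are bounded, so the product vanishes outright; at an infinite endpoint I would instead invoke $\varphi_m,\varphi_n\in\CC{H}_2^1$, so that $\varphi_m\varphi_n$ is absolutely continuous and integrable, forcing its limits to be zero. Either way the boundary term vanishes and $\DDD_{m,n}=-\DDD_{n,m}$.

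The converse is where the real work lies. Assuming $\DDD$ skew-symmetric, the identity forces $w(b)\,p_m(b)p_n(b)=w(a)\,p_m(a)p_n(a)$ for all $m,n\in\BB{Z}_+$. Taking $m=n=0$ and using that $p_0\equiv1/\sqrt{\mu_0}$ is a nonzero constant gives $w(a)=w(b)=:W$. Suppose, for contradiction, that $W\neq0$; then I may cancel $W$ and set $n=0$ to deduce $p_m(b)=p_m(a)$ for every $m$. The hard part is to exclude this coincidence of the two boundary vectors, and the clean device is the three-term recurrence \R{eq:1.3}. Evaluating it at $x=b$ and at $x=a$ and substituting $p_k(b)=p_k(a)$ throughout, the two relations differ only in the coefficient multiplying $p_n(a)$, so subtracting them yields $0=(b-a)p_n(a)$ for every $n$. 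As $b>a$ this forces $p_n(a)\equiv0$, contradicting $p_0(a)\neq0$. Therefore $W=0$, i.e.\ $w(a)=w(b)=0$.

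The only genuine obstacle is thus the converse, and specifically ruling out the possibility that $(p_n(a))_{n}$ and $(p_n(b))_{n}$ are proportional; the recurrence \R{eq:1.3} disposes of this in one line. I expect the remaining care to be merely cosmetic, namely interpreting $w(a)$ and $w(b)$ as one-sided limits and justifying the vanishing of the boundary terms when $a$ or $b$ is infinite, which follows from membership in the Sobolev space underlying the construction.
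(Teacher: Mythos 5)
Your proposal is correct, and while the reduction to the boundary identity $\DDD_{m,n}+\DDD_{n,m}=w(b)p_m(b)p_n(b)-w(a)p_m(a)p_n(a)$ coincides with the paper's (the paper performs the same integration by parts, just written out through the product rule on $\sqrt{w}\,p_m$), your treatment of the converse is genuinely different. The paper invokes the classical fact that all zeros of orthogonal polynomials lie in the open interval $(a,b)$, which gives the endpoint sign pattern $(-1)^kp_k(a)p_k(b)>0$: for $m+n$ odd the quantities $w(a)p_m(a)p_n(a)\leq0$ and $w(b)p_m(b)p_n(b)\geq0$ can then only agree if both weights vanish. You instead normalise via the $(0,0)$ entry to get $w(a)=w(b)=W$, use the $(m,0)$ entries to force the boundary vectors to coincide, $p_m(a)=p_m(b)$, and kill this with the three-term recurrence \R{eq:1.3} evaluated at both endpoints, which yields $(b-a)p_n(a)=0$. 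Both arguments are sound; the paper's is a one-liner once the zero-location theorem is granted and shows moreover that skew-symmetry fails simultaneously at every odd-parity pair, whereas yours is more self-contained -- it needs only that $p_0$ is a nonzero constant and that the recurrence holds with $\beta_n\neq0$, so it would survive in settings (signed measures, say) where the zero-location argument is unavailable. Your explicit handling of the infinite-endpoint boundary terms via absolute continuity and integrability of $\varphi_m\varphi_n$ is in fact slightly more careful than the paper's remark that the infinite cases are ``similar -- in fact, somewhat simpler''; note only that the existence of the entries of $\DDD$ already forces $w\,p_mp_n$ to have limits at the endpoints, which legitimises reading $w(a)$, $w(b)$ as one-sided limits, the cosmetic point you flagged at the end.
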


\begin{proof}
  Assume first that $-\infty<a<b<\infty$ and note that $\DDD$ is skew-symmetric if and only if $\DDD_{m,n}+\DDD_{n,m}=0$, $m,n\in\BB{Z}_+$. Since
  \begin{displaymath}
     \varphi_n'=\sum_{k=0}^\infty \DDD_{n,k} \varphi_k,\qquad n\in\BB{Z}_+,
  \end{displaymath}
  it follows from \R{eq:2.2} and the orthonormality of $\Phi$ that $\DDD$ is skew symmetric if
  \begin{displaymath}
    \int_a^b \frac{\D \sqrt{w(x)} p_m(x)}{\D x} \sqrt{w(x)}p_n(x) \D x+\int_a^b \sqrt{w(x)}p_m(x)\frac{\D \sqrt{w(x)}p_n(x)}{\D x} \D x=0
  \end{displaymath}
  for $m,n\in\BB{Z}_+$.   The latter is equivalent, for every $m,n\in\BB{Z}_+$, to
  \begin{Eqnarray*}
    &&\int_a^b \left[ \frac{w'(x)}{2\sqrt{w(x)}} p_m(x)+\sqrt{w(x)}p_m'(x)\right]\! \sqrt{w(x)}p_n(x)\D x\\
    &&\mbox{}+\int_a^b \sqrt{w(x)}p_m(x)\!\left[\frac{w'(x)}{2\sqrt{w(x)}} p_n(x)+\sqrt{w(x)}p_n'(x)\right]\!\D x=0\\
    \Leftrightarrow&&\int_a^b [w'(x)p_m(x)p_n(x)+w(x)p_m'(x)p_n(x)+w(x)p_m(x)p_n'(x)]\D x=0\\
    \Leftrightarrow&&\int_a^b \left[w'(x)p_m(x)p_n(x)+w(x)\frac{\D p_m(x)p_n(x)}{\D x}\right]\D x\\
    \Leftrightarrow&&\int_a^b w'(x)p_m(x)p_n(x)\D x +w(x)p_m(x)p_n(x)\,\rule[-6pt]{0.75pt}{20pt}_{\,a}^{\,b}-\int_a^b w'(x)p_m(x)p_n(x)\D x\\
    \Leftrightarrow&&w(b)p_m(b)p_n(b)=w(a)p_m(a)p_n(a).
  \end{Eqnarray*}
  All the zeros of orthogonal polynomials reside in $(a,b)$. Therefore they cannot vanish at the endpoints and $(-1)^k p_k(a)p_k(b)>0$, $k\in\BB{N}$, hence $p_m(a)p_n(a)$ cannot equal $p_m(b)p_n(b)$ for all $m,n\in\BB{N}$. We deduce that $\DDD$ is skew-symmetric if and only if $w(a)=w(b)=0$. 
  
  The proof is similar -- in fact, somewhat simpler -- once either $b=\infty$ or $a=-\infty$. If $(a,b)=(-\infty,\infty)$ then $\CC{L}_2$ boundedness and continuity imply  $w(\pm\infty)=0$, $\DDD$ is skew symmetric and there is nothing to prove.
\end{proof}

Consequently, we impose an additional condition on the weight function, namely that it vanishes at the endpoints. Note that this is automatically true once an endpoint is infinite.

A quintessential example of a W-function are {\em Hermite functions\/}
\begin{displaymath}
  \varphi_n(x)=\frac{\ee^{-x^2/2}}{\sqrt{2^nn!\sqrt{\pi}}} \CC{H}_n(x),\qquad n\in\BB{Z}_+,
\end{displaymath}
where the $\CC{H}_n$s are standard Hermite polynomials. Hermite functions are well known in mathematical physics, because they are eigenfunctions of the free Schr\"odinger operator. They can be derived from orthonormalised Hermite polynomials via the Fourier transform route, as mentioned in the introduction and e.g.\ in \cite{iserles19oss}, hence their differentiation matrix is tridiagonal. On the other hand they are W-functions with $w(x)=\ee^{-x^2}$. Tridiagonality implies that $\CC{ind}\,w=\infty$. More generally, $\CC{ind}\,w=\infty$ once $\DDD$ is a banded matrix and it is interesting to characterise all weight functions with this feature.

\begin{theorem}
  The differentiation matrix $\DDD$ of a system of  W-functions is banded if and only if $w(x)=\ee^{-c(x)}$, $x\in\BB{R}$, where $c$ is an even-degree polynomial whose highest degree coefficient is strictly positive.
\end{theorem}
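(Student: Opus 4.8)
The plan is to reduce the whole statement to a single transparent computation of the entries $\DDD_{m,n}$ and then read off both implications from it. Writing $q=w'/(2w)$, the product rule gives $\varphi_m'=\sqrt{w}\,(q\,p_m+p_m')$, so that by the definition \R{eq:2.2} and $\varphi_n=\sqrt{w}\,p_n$,
\[
  \DDD_{m,n}=\int_a^b (q\,p_m+p_m')\,p_n\,w\,\D x=\langle q\,p_m+p_m',p_n\rangle_w ,
\]
where $\langle\,\cdot\,,\cdot\,\rangle_w$ is the inner product of $\CC{L}_2((a,b),w\,\D x)$. This identity carries both directions of the theorem, and in each case the crux is the interplay between the polynomial degree of $q\,p_m+p_m'$ and the orthogonality \R{eq:2.1}.

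For the $\Leftarrow$ direction I would start from $w=\ee^{-c}$ with $c$ of even degree $d\ge2$ and positive leading coefficient. Then $w>0$ on $\BB{R}$, $w\to0$ at $\pm\infty$, and the super-polynomial decay of $w$ guarantees $w\in\CC{L}_2(\BB{R})$ with all moments finite, so $w$ is a genuine weight function vanishing at the (infinite) endpoints; by the lemma above $\DDD$ is therefore skew-symmetric. Here $q=w'/(2w)=-\tfrac12 c'$ is a polynomial of degree $d-1$, whence $q\,p_m+p_m'$ has degree $m+d-1$. Orthogonality then forces $\DDD_{m,n}=\langle q\,p_m+p_m',p_n\rangle_w=0$ whenever $n\ge m+d$, and skew-symmetry propagates this to $m\ge n+d$. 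Thus $\DDD_{m,n}=0$ for $|m-n|\ge d$, i.e.\ $\DDD$ is banded with half-bandwidth $d-1$ (for the quartic Freud weight $c(x)=x^4$ this reproduces the bandwidth seven of \citeasnoun{luong23apw}).

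The substantive direction is $\Rightarrow$. Suppose $\DDD$ is banded, say $\DDD_{m,n}=0$ for $|m-n|>r$. Since $\Phi$ is a complete orthonormal system and $\varphi_m'\in\CC{L}_2(a,b)$, the Parseval expansion $\varphi_m'=\sum_n\DDD_{m,n}\varphi_n$ collapses to the finite sum $\varphi_m'=\sum_{|n-m|\le r}\DDD_{m,n}\varphi_n$. Dividing through by $\sqrt{w}$ (legitimate on the open interval, where $w>0$) converts this into the pointwise polynomial identity
\[
  q\,p_m+p_m'=\sum_{n=m-r}^{m+r}\DDD_{m,n}\,p_n .
\]
Taking $m=0$, where $p_0$ is a nonzero constant and $p_0'=0$, shows that $q=w'/(2w)$ is itself a polynomial; integrating $(\log w)'=2q$ then yields $w=\ee^{-c}$ for some polynomial $c$. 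It remains to pin down $c$. Because $\ee^{-c}$ never vanishes at a finite point, the standing requirement $w(a)=w(b)=0$ forces $a=-\infty$, $b=+\infty$, and $w\to0$ at both $\pm\infty$ forces $c\to+\infty$ there, which for a polynomial means precisely that $c$ has even degree with strictly positive leading coefficient (degree $0$ is excluded, since a constant weight is not in $\CC{L}_2(\BB{R})$). This recovers exactly the asserted class of weights.

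I expect the only delicate point to be the justification of the division by $\sqrt{w}$ in the $\Rightarrow$ direction: this step relies on the finite expansion holding as a genuine pointwise, not merely $\CC{L}_2$, identity, which in turn rests on the completeness of $\Phi$ in $\CC{L}_2(a,b)$ — equivalently, of $\PPP$ in $\CC{L}_2((a,b),w\,\D x)$ — so that $\varphi_m'$ lies in the closed linear span of $\Phi$ and equals its own Fourier series. Everything else is bookkeeping with polynomial degrees and orthogonality.
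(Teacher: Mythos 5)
Your proof is correct and follows essentially the same route as the paper: bandedness is translated, via $\DDD_{m,n}=\langle (w'/(2w))\,p_m+p_m',\,p_n\rangle_w$, into $w'/w$ being a polynomial, whence $w=\ee^{-c}$, with the standing endpoint condition $w(a)=w(b)=0$ and integrability then forcing $a=-\infty$, $b=\infty$ and $c$ of even degree with strictly positive leading coefficient. Your only departures are cosmetic: taking $m=0$ lets you bypass the paper's divisibility-in-$m$ argument for extracting the polynomial $c$, and you make explicit both the $\Leftarrow$ direction and the completeness assumption on $\PPP$ in $\CC{L}_2((a,b),w\,\D x)$, which the paper uses implicitly.
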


\begin{proof}
  Letting $m\leq n-1$, orthogonality implies that
  \begin{displaymath}
    \DDD_{m,n}=\int_a^b w\!\left(\frac12 \frac{w'}{w}p_m+p_m'\right)\!p_n\D x=\frac12 \int_a^b w'p_m p_n\D x
  \end{displaymath}
  while for $m\geq n+1$ skew symmetry yields
  \begin{equation}
    \label{eq:2.3}
    \DDD_{m,n}=-\frac12 \int_a^b w'p_m p_n\D x.
  \end{equation}
  Assume that $\DDD$ has bandwidth $2L+1$, in other words that $\DDD_{m,n}=0$ for $|m-n|\geq L+1$. It follows for $m\leq n-1$ that
  \begin{displaymath}
    \int_a^b w X_m  p_n\D x=0,\quad n\geq m+L+1\qquad\mbox{where}\qquad X_m=\frac{w'}{w}p_m. 
  \end{displaymath}
  If in addition $n\geq L$, expanding $X_m$ in the basis $\PPP$ it follows that $X_m$ is a polynomial of degree $m+L+1$. However,
  \begin{displaymath}
    w'=\frac{X_m}{p_m}w\qquad\rightarrow\qquad w(x)=w(x_0)\exp\left(\int_{x_0}^x \frac{X_m(y)}{p_m(y)}\D y\right)
  \end{displaymath}
  for some $x_0$. Since $w$ is independent of $m$, necessarily $p_m$ divides $X_m$ and the remainder $c$ is a polynomial independent of $m$. Therefore without loss of generality $w(x)=\ee^{-c(x)}$ where $c$ is a polynomial of degree $L+1$.  $w$ being integrable and $w(a)=w(b)=0$, necessarily $a=-\infty$, $b=\infty$,\footnote{In a finite interval we would have had an essential singularity at an endpoint.} $L$ is odd and $c$ is an even-degree polynomial with strictly positive  leading-degree coefficient.
\end{proof}

We have recovered {\em precisely\/} the W-functions associated with generalised Freud polynomials that have been originally considered in \cite{luong23apw}. However, such W-functions are of little interest within the context of this paper, since we seek weight functions of finite index. 

This is the point to note the expression \R{eq:2.3} for the elements of $\DDD$ such that $m\geq n+1$. (If $m\leq n-1$ we need to flip the sign.) We will make much use of it in the sequel. 

\subsection{The boundedness of $\DDD^{\,s}$}

We assume in this section that the weight $w$ is strictly positive in $(a,b)$, as smooth in $[a,b]$ as needed in our construction, and set
\begin{displaymath}
  q_j(x)=\frac{\D^j \sqrt{w(x)}}{\D x^j},\qquad j\in\BB{Z}_+.
\end{displaymath}
Therefore
\begin{displaymath}
  \varphi_m^{(\ell)}=\sum_{j=0}^\ell {\ell\choose j} q_j p_m^{(\ell-j)},\qquad \ell,m\in\BB{Z}_+.
\end{displaymath}
As long as $\DDD^{\,s}$ is bounded, we have
\begin{equation}
  \label{eq:2.4}
  (\DDD^{\,s})_{m,n}=\int_a^b \varphi_m^{(s)}\varphi_n\D x=\sum_{j=0}^s {s\choose j} \int_a^b \sqrt{w} q_j p_m^{(s-j)}\D x.
\end{equation}
It is trivial to prove that
\begin{displaymath}
  q_r=\sum_{j=1}^r \frac{U_{r,j}}{w^{j-\frac12}},
\end{displaymath}
where each $U_{r,j}$ is a linear combination of products of the form $\prod_i w^{(\ell_i)}$ such that $\ell_i\geq1$ and $\sum_i \ell_i=r$: for example 
\begin{displaymath}
  U_{4,1}=\frac12 w^{(4)},\qquad U_{4,2}=-\frac34 {w''}^2-w'w''',\qquad U_{4,3}=\frac94 {w'}^2w'',\qquad U_{4,4}=-\frac{15}{16}{w'}^4.
\end{displaymath}
In general, $U_{r,r}=(-1)^{r-1} (2r)! {w'}^r/(4^rr!)$, $r\in\BB{N}$.

Since $w(x)>0$ in $(a,b)$, the only possible source of singularity in \R{eq:2.4} is that $\varphi_m^{(s)}$ is non-integrable at an endpoint. Recalling that $w(a)=w(b)=0$, the source of any integrability is a division by a power of $w$ and, the larger the power, the more significant the singularity. In other words,  $\varphi_m^{(s)}$ is bounded for all $m\in\BB{Z}_+$ only if the integral
\begin{displaymath}
  \int_a^b \sqrt{w} q_s p\D x
\end{displaymath}
is bounded for any polynomial $p$, and this is contingent on $\tilde{w}_s={w'}^s/w^{s-1}$ being a {\em signed\/} weight function, i.e.\ all its moments exist and $\tilde{w}_s\not\equiv0$.\footnote{$\tilde{w}_1$ cannot be a `true' weight function because $w(a)=w(b)=0$, hence $w'$ changes sign in $(a,b)$.} 
The following theorem is thereby true.

\begin{theorem}
  A necessary condition for $\CC{ind}\,w\geq s$ is that $\tilde{w}_r$, $r=2,\ldots,s$, are signed weights.
\end{theorem}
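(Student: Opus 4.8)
The statement is a level-by-level corollary of the singularity analysis that produced \R{eq:2.4}, so the plan is to fix one index $r$ with $2\le r\le s$, rerun that analysis with $s$ replaced by $r$, and read off that $\tilde w_r$ is a signed weight; letting $r$ range over $2,\ldots,s$ then gives the theorem. The only structural input is the definition of the index: $\CC{ind}\,w\ge s$ means precisely that each of $\DDD,\DDD^{\,2},\ldots,\DDD^{\,s}$ is bounded, so in particular $\DDD^{\,r}$ is bounded for every such $r$.

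The first, soft observation is that boundedness of $\DDD^{\,r}$ makes every entry $(\DDD^{\,r})_{m,n}$ finite, whence every integral on the right-hand side of \R{eq:2.4} (with $s$ replaced by $r$) converges. The substance is then to decide which of the Leibniz terms $\int_a^b\sqrt w\,q_j\,p_m^{(r-j)}p_n\D x$, $j=0,\ldots,r$, carries the binding constraint. Localising at a finite endpoint where $w$ vanishes and inserting $q_j=\sum_{i=1}^j U_{j,i}/w^{i-\frac12}$, an order count shows that $\sqrt w\,q_j$ blows up there at the same rate as $\tilde w_j={w'}^j/w^{j-1}$, and that this rate is strictly milder for smaller $j$; hence the critical term is $j=r$, whose endpoint behaviour matches that of $\tilde w_r$. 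Since singularities of different orders $j$ cannot cancel, finiteness of $(\DDD^{\,r})_{m,n}$ forces finiteness of the term $\int_a^b\sqrt w\,q_r\,p_m p_n\D x$, hence of $\int_a^b\tilde w_r\,p_m p_n\D x$, and as the products $p_m p_n$ span all polynomials while $w'\not\equiv0$ gives $\tilde w_r\not\equiv0$, this says exactly that $\tilde w_r$ is a signed weight. I would also record why the range starts at $r=2$: for $r=1$ one has $\sqrt w\,q_1=\tfrac12 w'$, and since $w(a)=w(b)=0$ integration by parts rewrites $\int_a^b w'p\D x$ as $-\int_a^b w\,p'\D x$, a finite combination of the moments of $w$, so $\tilde w_1$ imposes no condition and is excluded.

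The hard part is making rigorous the non-cancellation within the critical term. Writing $v=\sqrt w$ and $w\sim C(b-x)^\gamma$ at the endpoint, one finds $\sqrt w\,q_r=v\,v^{(r)}\sim C(-1)^r\big[\prod_{k=0}^{r-1}(\tfrac\gamma2-k)\big](b-x)^{\gamma-r}$, which shares the exponent $\gamma-r$ of $\tilde w_r$ but carries a falling-factorial coefficient. That coefficient vanishes exactly when $\gamma$ is an even integer with $\gamma\le 2r-2$, which are the cases in which $\sqrt w$ degenerates to a smooth factor and the singularity of the critical term drops a full order; there, finiteness of the entry no longer forces the moments of $\tilde w_r$, and the clean correspondence between boundedness and ``$\tilde w_r$ is a signed weight'' must instead be recovered from the finer boundary behaviour of $\varphi_m^{(r)}$ — the mechanism flagged in the introduction, whereby $\varphi^{(r)}=\DDD^{\,r}\varphi$ need not reproduce nonzero boundary values. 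For the families of Section~3, with $\gamma=\alpha$ at the finite endpoints, this degenerate set is just the even integers and is avoided for generic $\alpha$, so the argument closes; isolating and dispatching those exceptional exponents is the delicate point I expect to cost the most work.
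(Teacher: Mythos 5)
Your proposal follows essentially the same route as the paper. In fact the paper gives this theorem no formal proof at all: it is declared ``thereby true'' on the strength of the informal singularity analysis preceding it, namely that the worst endpoint behaviour in the Leibniz expansion \R{eq:2.4} sits in the $j=s$ term, whose endpoint profile is that of $\tilde{w}_s$, so that existence of the moments of $\tilde{w}_s$ is forced; your plan of rerunning this once for each $r\in\{2,\ldots,s\}$ (legitimate, since $\CC{ind}\,w\geq s$ bounds every $\DDD^{\,r}$, $r\leq s$) is exactly how the statement for all $r$ is meant to be read. At that level of rigour your argument is correct, and indeed more careful than the paper's.

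Two of your refinements deserve note, and one caution. First, your reason for excluding $r=1$ --- integration by parts with $w(a)=w(b)=0$ turns $\int_a^b w'p\,\D x$ into moments of $w$, so $\tilde{w}_1$ is automatically a signed weight --- is the right one; the paper's footnote only observes that $\tilde{w}_1$ cannot be a \emph{positive} weight, which is a different point. Second, your computation $\sqrt{w}\,q_r=v\,v^{(r)}\sim C(-1)^r\prod_{k=0}^{r-1}(\frac{\gamma}{2}-k)\,(b-x)^{\gamma-r}$ actually corrects the paper's heuristic ``the larger the power of $w$ in the denominator, the worse the singularity'': with $w\sim C(b-x)^\gamma$ every term $U_{r,i}/w^{i-\frac12}$ carries the \emph{same} exponent $\frac{\gamma}{2}-r$ (since $U_{r,i}\sim(b-x)^{i\gamma-r}$), and these contributions partially cancel down to the falling factorial, which vanishes precisely at your exceptional even integers $\gamma\leq 2r-2$. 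The gap you flag there is genuine, but it is a gap in the paper's argument too, not only in yours: take $w=(1-x^2)^2$ and $r=3$, so that $\sqrt{w}$ is a polynomial, every integral in \R{eq:2.4} is finite, yet $\tilde{w}_3\sim(1-x^2)^{-1}$ has no moments; the theorem survives only because $\DDD^{\,3}$ diverges as a matrix product (consistent with Theorem~8 at $\alpha=2$), i.e.\ in the degenerate range the obstruction lives in the infinite sums defining $\DDD^{\,r}$, not in the entrywise integrals --- exactly the ``finer boundary behaviour'' mechanism you point to as the necessary repair, so the entrywise analysis alone cannot close that case. Finally, a small logical caution: your opening inference ``boundedness of $\DDD^{\,r}$ makes every entry finite, whence every integral on the right of \R{eq:2.4} converges'' is backwards as stated, since the identity \R{eq:2.4} itself presupposes that convergence (the paper's ``as long as $\DDD^{\,s}$ is bounded'' shares this looseness); your subsequent non-cancellation argument across the distinct exponents $\gamma-j$ repairs it to the standard the paper itself adopts.
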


Let $-\infty<a<b<\infty$. Regularity and $w(a)=w(b)=0$ imply that
\begin{equation}
  \label{eq:2.5}
  w(x)=(x-a)^\alpha (b-x)^\beta v(x),\qquad x\in[a,b],\quad v(a),v(b)\neq0.
\end{equation}
Therefore, after elementary algebra,
\begin{displaymath}
  \tilde{w}_s=(x-a)^{\alpha-s}(b-x)^{s-\beta} v \left[(\alpha b+\beta a)-(\alpha+\beta)x+(x-a)(b-x)\frac{v'}{v}\right]^{\!s}\!.
\end{displaymath}

\begin{theorem}
  A necessary condition for $\CC{ind}\, w\geq s$ in a finite interval $(a,b)$ is that $\alpha,\beta>s-1$. Likewise, once $b=\infty$, we need $\alpha>s-1$ and for $a=-\infty$ the condition is $\beta>s-1$.
\end{theorem}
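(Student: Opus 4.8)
The plan is to reduce the statement to the preceding theorem, which already guarantees that $\CC{ind}\,w\geq s$ forces each $\tilde{w}_r$, $r=2,\ldots,s$, to be a signed weight, and then to read off the admissible endpoint exponents from the explicit factorisation of $\tilde{w}_s$ displayed above. Since a signed weight has finite moments, and since the integrand $x^k\tilde{w}_s$ is continuous and bounded on every compact subinterval of $(a,b)$, convergence of $\int_a^b x^k\tilde{w}_s\,\D x$ is entirely an endpoint matter; the whole argument therefore localises at $a$ and at $b$.

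First I would determine the leading behaviour of $\tilde{w}_s$ at each endpoint from the displayed expression. The bracketed factor $(\alpha b+\beta a)-(\alpha+\beta)x+(x-a)(b-x)v'/v$ reduces to $\alpha(b-a)$ at $x=a$ and to $-\beta(b-a)$ at $x=b$. The standing hypothesis $w(a)=w(b)=0$ together with $v(a),v(b)\neq0$ forces $\alpha,\beta>0$, and $b>a$, so both values are nonzero; hence the bracket, raised to the power $s$, is bounded away from $0$ near each endpoint, as is $v$. Reading off the remaining pure powers of $(x-a)$ and $(b-x)$ then gives $\tilde{w}_s(x)\sim C_a(x-a)^{\alpha-s}$ as $x\to a^+$ and $\tilde{w}_s(x)\sim C_b(b-x)^{\beta-s}$ as $x\to b^-$, with $C_a,C_b\neq0$.

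With the leading powers identified, the remainder is the elementary convergence criterion for $\int(x-a)^{\gamma}\,\D x$: the zeroth moment of $\tilde{w}_s$ — and hence, since $x^k$ is bounded at a finite endpoint, all its moments — converges at $a$ exactly when $\alpha-s>-1$ and at $b$ exactly when $\beta-s>-1$. Thus $\tilde{w}_s$ is a signed weight if and only if $\alpha,\beta>s-1$. Among the requirements supplied by the preceding theorem the case $r=s$ is the most stringent, since $\tilde{w}_r$ signed needs only $\alpha,\beta>r-1$; it alone therefore yields $\alpha,\beta>s-1$ and subsumes the cases $r<s$. This settles the finite-interval claim.

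For the half-infinite cases the same localisation applies, but only one endpoint is finite. When $b=\infty$ the factorisation \R{eq:2.5} keeps only the $(x-a)^\alpha$ factor near $a$; the analysis at $a$ is unchanged and delivers $\alpha>s-1$, whereas integrability at $+\infty$ is governed by the decay of $w$ rather than by any endpoint exponent and imposes no analogous constraint. The case $a=-\infty$ is symmetric and gives $\beta>s-1$. The one point demanding genuine care — and the main obstacle — is to confirm that the bracketed factor does not vanish at either endpoint: a zero there would raise the effective power of $(x-a)$ or $(b-x)$ in $\tilde{w}_s$, so that the signed-weight property would follow from a strictly weaker bound than $\alpha,\beta>s-1$ and the implication would break down. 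This is exactly where the hypotheses $\alpha,\beta>0$ and $v(a),v(b)\neq0$ are needed.
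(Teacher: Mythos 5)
Your proposal is correct and follows essentially the same route as the paper: invoke the preceding theorem to force $\tilde{w}_s={w'}^s/w^{s-1}$ to be a signed weight, then read off the endpoint exponents $(x-a)^{\alpha-s}$ and $(b-x)^{\beta-s}$ from the displayed factorisation and apply the elementary integrability criterion, treating the semi-infinite cases at the finite endpoint only. The paper's proof is merely terser --- your verification that the bracketed factor equals $\alpha(b-a)$ at $x=a$ and $-\beta(b-a)$ at $x=b$, hence is nonvanishing there, is exactly the detail compressed into its remark that $v\neq0$ leaves those two powers as the only sources of singularity.
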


\begin{proof}
  Consistently with our assumptions, $v\neq0$ in $[a,b]$, therefore the only source of singularity may come from $(x-a)^{\alpha-s}$ and $(b-x)^{\beta-s}$. We conclude that, for $\DDD^{\,s}$ to be bounded, we need $\alpha,\beta>s-1$. The semi-infinite cases follow in an identical (and simpler!) manner.
\end{proof}

In the special case $s=2$ we can complement Theorem~3 with a sufficient condition.

\begin{theorem}
  A necessary and sufficient condition for $\CC{ind}\,w\geq2$ is that $\tilde{w}_2={w'}^2/w$ is a signed measure.
\end{theorem}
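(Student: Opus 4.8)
\emph{Plan.} The statement is an equivalence, and one half is already in hand: taking $s=2$ in the necessary-condition theorem proved above shows that $\CC{ind}\,w\geq2$ forces $\tilde w_2={w'}^2/w$ to be a signed weight, so only sufficiency needs work. (Since $\tilde w_2=(w')^2/w\geq0$, ``signed measure'' here simply means that all its moments are finite.) Thus the plan is to assume that $\tilde w_2$ has finite moments of every order and to deduce that every entry of $\DDD$ and of $\DDD^2$ is finite, i.e.\ that neither matrix blows up.

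The entries of $\DDD$ give no trouble. By \R{eq:2.3} they equal $\pm\frac12\int_a^b w'p_mp_n\D x$, and the standing hypothesis $w(a)=w(b)=0$ already forces $\alpha,\beta>0$ in \R{eq:2.5}, so that $w'$ behaves like $(x-a)^{\alpha-1}$ and $(b-x)^{\beta-1}$ at the endpoints and is integrable against every polynomial. Hence $\DDD$ is bounded unconditionally and the whole question is about $\DDD^2$. I would next record that the signed-weight hypothesis pins down the endpoint exponents: since $\tilde w_2\sim(x-a)^{\alpha-2}$ near $a$ and $\tilde w_2\sim(b-x)^{\beta-2}$ near $b$, finiteness of all its moments is equivalent to $\alpha,\beta>1$.

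For $\DDD^2$ the plan is to integrate by parts once. Because $\alpha,\beta>1$, the product $\varphi_m'\varphi_n$ decays like $(x-a)^{\alpha-1}$ and $(b-x)^{\beta-1}$, so the boundary term $[\varphi_m'\varphi_n]_a^b$ vanishes and $(\DDD^2)_{m,n}=\int_a^b\varphi_m''\varphi_n\D x=-\int_a^b\varphi_m'\varphi_n'\D x$. Substituting $\varphi_k'=\frac{w'}{2\sqrt w}p_k+\sqrt w\,p_k'$ and expanding the product isolates the only dangerous contribution:
\begin{displaymath}
  \int_a^b\varphi_m'\varphi_n'\D x=\frac14\int_a^b\tilde w_2\,p_mp_n\D x+\frac12\int_a^b w'(p_mp_n)'\D x+\int_a^b w\,p_m'p_n'\D x.
\end{displaymath}
The first integral is finite precisely because $\tilde w_2$ is a signed weight; the second is integrable since $\alpha,\beta>0$, and the third because $w$ has all moments. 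Hence every $(\DDD^2)_{m,n}$ is finite, $\DDD^2$ is bounded, and therefore $\CC{ind}\,w\geq2$, which is the sufficiency.

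The main obstacle I anticipate is the bookkeeping that guarantees no singularity has been overlooked or silently cancelled: one must be sure that, after integrating by parts, the sole term carrying a negative power of $w$ is $\frac14\int\tilde w_2 p_mp_n$, and that the remaining pieces survive under the weaker condition $\alpha,\beta>0$ that holds automatically. The cross terms collapse to the exact derivative $\frac12 w'(p_mp_n)'$, which is what makes this clean; the same conclusion can be reached without integrating by parts by using \R{eq:2.4} together with the identity $\sqrt w\,q_2=\frac12 w''-\frac14\tilde w_2$, but then one must separately check that $\int_a^b w''\cdot\mathrm{poly}\,\D x$ converges, which again needs exactly $\alpha,\beta>1$ and shows the criterion is sharp. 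A secondary point requiring care is the vanishing of the boundary term, which is precisely where the strict inequalities $\alpha,\beta>1$---hence the signed-weight hypothesis---are consumed; the semi-infinite and doubly-infinite cases are handled identically but more easily, the decay of $w$ at an infinite endpoint removing the boundary contribution outright.
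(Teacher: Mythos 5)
Your necessity half and your three-integral decomposition are sound, but there is a genuine gap at the pivot of the sufficiency argument: you take $(\DDD^{\,2})_{m,n}=\int_a^b\varphi_m''\varphi_n\,\D x$ as if it were the definition of the entries of $\DDD^{\,2}$. It is not: $\DDD^{\,2}$ is the square of the infinite matrix $\DDD$, so $(\DDD^{\,2})_{m,n}=\sum_{\ell=0}^\infty\DDD_{m,\ell}\DDD_{\ell,n}$, and unboundedness of $\DDD^{\,2}$ means precisely that this series blows up. The identity you invoke is \R{eq:2.4}, which the paper states only under the proviso ``as long as $\DDD^{\,s}$ is bounded''; using it to \emph{prove} boundedness is circular, and the same objection applies to your alternative route via $\sqrt{w}\,q_2=\frac12 w''-\frac14\tilde{w}_2$. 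Finiteness of $-\int_a^b\varphi_m'\varphi_n'\,\D x$ does not by itself preclude divergence of the matrix-product series, so as written your argument establishes the finiteness of a quantity that has not been identified with $(\DDD^{\,2})_{m,n}$.

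The gap is repairable, and the repair is in substance what the paper's proof actually does. By skew-symmetry (Lemma 1), $\DDD_{\ell,n}=-\DDD_{n,\ell}=-\langle\varphi_n',\varphi_\ell\rangle$, while your own expansion $(\varphi_k')^2=\frac14\tilde{w}_2p_k^2+w'p_kp_k'+w(p_k')^2$ shows that the signed-weight hypothesis guarantees $\varphi_k'\in\CC{L}_2(a,b)$ for every $k$. Since $\Phi$ is complete in $\CC{L}_2(a,b)$ (inherited from completeness of $\PPP$ in $\CC{L}_2((a,b),w\,\D x)$: if $f$ is orthogonal to every $\sqrt{w}\,p_n$ then $f/\sqrt{w}\in\CC{L}_2((a,b),w\,\D x)$ is orthogonal to every $p_n$), the Parseval theorem yields
\begin{displaymath}
  (\DDD^{\,2})_{m,n}=\sum_{\ell=0}^\infty\langle\varphi_m',\varphi_\ell\rangle\,\DDD_{\ell,n}
  =-\sum_{\ell=0}^\infty\langle\varphi_m',\varphi_\ell\rangle\langle\varphi_n',\varphi_\ell\rangle
  =-\langle\varphi_m',\varphi_n'\rangle,
\end{displaymath}
with the series converging absolutely by Cauchy--Schwarz; only after this bridge does your decomposition into $-\frac14\int\tilde{w}_2\,p_mp_n-\frac12\int w'(p_mp_n)'-\int w\,p_m'p_n'$ legitimately finish sufficiency. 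This completeness/Parseval step is exactly the role played in the paper's proof by the Christoffel--Darboux reproducing kernel \R{eq:2.8}, which is used there to collapse the series $\sum_\ell\DDD_{m,\ell}\DDD_{\ell,n}$ into a single integral against $\tilde{w}_2$; your route, once patched, reaches the same conclusion while avoiding the double-integral kernel manipulation, at the cost of carrying two additional (harmlessly finite) integrals.
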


\begin{proof}
  We compute $\DDD^{\,2}$ directly. Using skew symmetry,
  \begin{displaymath}
  \DDD_{m,n}^{\,2}=\sum_{\ell=0}^\infty \DDD_{m,\ell}\DDD_{\ell,n}=-\sum_{\ell=0}^{n-1} \DDD_{m,\ell}\DDD_{n,\ell} +\sum_{\ell=n+1}^{m-1} \DDD_{m,\ell}\DDD_{\ell,n} -\sum_{\ell=m+1}^\infty \DDD_{\ell,m}\DDD_{\ell,n}.
\end{displaymath}

Recalling \R{eq:2.3}, let us consider the infinite sum
\begin{Eqnarray*}
  -\sum_{\ell=m+1}^\infty \DDD_{\ell,m}\DDD_{\ell,n}&=&-\frac14 \int_a^b \int_a^b w'(x)w'(y)p_m(x)p_n(y)\sum_{\ell=m+1}^\infty p_\ell(x)p_\ell(y) \D x\D y\\
  &=&-\frac14 \int_a^b \int_a^b w'(x)w'(y)p_m(x)p_n(y)\sum_{\ell=0}^\infty p_\ell(x)p_\ell(y) \D x\D y\\
  &&\mbox{}+\frac14 \sum_{\ell=0}^m  \int_a^b w'(x)p_m(x) p_\ell(x)\D x\int_a^b w'(y)p_n(y) p_\ell(y) \D y\\
  &=&-\frac14 \int_a^b \int_a^b w'(x)w'(y)p_m(x)p_n(y)\sum_{\ell=0}^\infty p_\ell(x)p_\ell(y) \D x\D y\\
  &&\mbox{}+ \sum_{\ell=0}^{n-1} \DDD_{m,\ell}\DDD_{n,\ell} -\sum_{\ell=m+1}^{n-1} \DDD_{m,\ell}\DDD_{\ell,n}.
\end{Eqnarray*}
Therefore,
\begin{equation}
  \label{eq:2.6}
  \DDD_{m,n}^{\,2}=-\frac14 \int_a^b\! \int_a^b w'(x)w'(y)p_m(x)p_n(y)\sum_{\ell=0}^\infty p_\ell(x)p_\ell(y) \D x\D y.
\end{equation}

Let $\PPP$ be orthonormal and complete in $\CC{L}_2((a,b),w\D x)$ and  $f\in\CC{L}_2((a,b),w\D x)$. Then
\begin{displaymath}
  f(x)=\sum_{m=0}^\infty \hat{f}_m p_m(x),\qquad \mbox{where}\qquad \hat{f}_m=\int_a^b w(x)f(x)p_m(x)\D x.
\end{displaymath}
Moreover, by the Parseval theorem,
\begin{equation}
  \label{eq:2.7}
  \int_a^b w(x)|f(x)|^2\D x=\|f\|^2=\sum_{m=0}^\infty |\hat{f}_m|^2.
\end{equation}

Since 
\begin{displaymath}
  |\hat{f}_m|^2=\int_a^b \!\int_a^b w(x)w(y)f(x)\overline{f(y)} p_m(x)p_m(y)\D x\D y,
\end{displaymath}
exchanging summation and integration we have 
\begin{displaymath}
  \|f\|^2=\int_a^b\!\int_a^b w(x)w(y)f(x)\overline{f(y)} \sum_{m=0}^\infty p_m(x) p_m(y) \D x\D y.
\end{displaymath}
Let
\begin{displaymath}
  K(x,y)=\sqrt{w(x)w(y)} \sum_{m=0}^\infty p_m(x)p_m(y),
\end{displaymath}
the {\em Christoffel--Darboux kernel.\/} It now follows from \R{eq:2.7} that for every $f\in\CC{L}_2((a,b),w\D x)$ it is true that
\begin{displaymath}
  \int_a^b w(x) |f(x)|^2\D x=\int_a^b\int_a^b \sqrt{w(x)w(y)} f(x)\overline{f(y)} K(x,y)\D x\D y
\end{displaymath}
and we deduce that
\begin{equation}
  \label{eq:2.8}
  K(x,y)=\delta_{x-y}.
\end{equation}
In other words, $K$ is a reproducing kernel.\footnote{While this is probably known, the author failed to find this result in literature, even in the encyclopaedic review of the Christoffel--Darboux kernel in \cite{simon08cdk}, see also \cite{ismail05cqo,lasserre22cdk} -- the reason might well be that the emphasis is usually on general Borel measures, rather than on $\D\mu=w\D x$ with $w\in\CC{C}^1(a,b)$. One way or the other, the proof is included for completeness.}

We now return to \R{eq:2.6}, deducing that 
\begin{Eqnarray*}
  \DDD_{m,n}^{\,2}&=&-\frac14 \int_a^b \int_a^b \frac{w'(x)w'(y)}{\sqrt{w(x)w(y)}} p_m(x)p_n(y) K(x,y)\D x\D y\\
  &=&-\frac14 \int_a^b \frac{{w'}^2(x)}{w(x)} p_m(x)p_n(x) \D x=-\frac14 \int_a^b \tilde{w}_2(x) p_m(x)p_n(x) \D x.
\end{Eqnarray*}
This is bounded because $\tilde{w}_2$ is a signed measure and $p_mp_n$ a polynomial, and we deduce that $\CC{ind}\,w\geq2$. The necessity of $\tilde{w}_2$ being a signed measure is obvious from the argument that led to Theorem~4.
\end{proof}

\setcounter{equation}{0}
\setcounter{figure}{0}
\section{Separable systems}

In this section we consider two families of weight functions that share a hugely beneficial feature of {\em separability\/}, and we also provide two examples of weights that lack this feature.

We say that a weight function $w$ is {\em separable\/} if there exist real sequences $\GG{a}=\{\GG{a}_n\}_{n\in\bb{Z}_+}$ and $\GG{b}=\{\GG{b}_n\}_{n\in\bb{Z}_+}$ such that
\begin{equation}
  \label{eq:3.1}
  \DDD_{m,n}=
  \begin{case}
    \GG{a}_m\GG{b}_n, & m\geq n+1,\\[4pt]
    0, & m=n,\\[4pt]
    -\GG{a}_n\GG{b}_m, & m\leq n-1,
  \end{case}\qquad m,n\in\BB{Z}_+
\end{equation}
and it is {\em symmetrically separable\/} subject to the existence of real sequences $\GG{a}=\{\GG{a}_n\}_{n\in\bb{Z}_+}$ and $\GG{b}=\{\GG{b}_n\}_{n\in\bb{Z}_+}$ such that
\begin{equation}
  \label{eq:3.2}
  \DDD_{m,n}=
  \begin{case}
    -\GG{a}_m\GG{b}_n & m+n\mbox{\ odd,}\; m\geq n+1,\\[4pt]
    0, & m+n\mbox{\ even},\\[4pt]
    \GG{a}_m\GG{b}_n, & m+n\mbox{\ odd,}\; m\leq n-1,
  \end{case}\qquad m,n\in\BB{Z}_+
\end{equation}
It will be demonstrated in Section~4 that separability or symmetric separability allow for very rapid computation of products of the form $\DDD_N\MM{v}$ for $\MM{v}\in\BB{R}^{N+1}$. 

In this section we consider two families of measures, one separable and the other symmetrically separable: the {\em Laguerre weight\/} $w(x)=x^\alpha \ee^{-x}\chi_{(0,\infty)}(x)$ and the {\em ultraspherical weight\/} $(1-x^2)^\alpha\chi_{(-1,1)}(x)$ respectively. In a way, they are the most obvious measures in intervals of the form $(0,\infty)$ and $(-1,1)$ respectively. Yet, interestingly, separability appears to be a very rare feature and we provide counterexamples further in this section. 

In both Laguerre and ultraspherical cases we are able to present comprehensive analysis, deriving the sequences $\GG{a},\GG{b}$ explicitly, determining $\CC{ind}\,w$ and (in Section~4) discussing the optimal choice of the parameter $\alpha$. 

\subsection{The Laguerre family}

Laguerre polynomials are orthogonal with respect to the Laguerre weight,
\begin{displaymath}
  \int_0^\infty x^\alpha \ee^{-x} \LL_m^{(\alpha)}(x)\LL_n^{(\alpha)}(x)\D x=\frac{\G(n+1+\alpha)}{n!}\delta_{m,n},\qquad m,n\in\BB{Z}_+,\quad \alpha>-1.
\end{displaymath}
\cite[p.~206]{rainville60sf}. In our case we consider just the case $\alpha>0$, so that the weight function vanishes at the origin. We have
\begin{Eqnarray*}
  p_n(x)&=&\sqrt{\frac{n!}{\G(n+1+\alpha)}} \LL_n^{(\alpha)}(x),\\
  \varphi_n(x)&=&\sqrt{\frac{n!}{\G(n+1+\alpha)}} x^{\alpha/2}\ee^{-x/2}\LL_n^{(\alpha)}(x),\qquad n\in\BB{Z}_+.
\end{Eqnarray*}

In Theorem \ref{th:Laguerre} we determine that the Laguerre weight is separable -- the proof requires a fair bit of algebraic computation and is relegated to Appendix~A. The separability coefficients are given in \R{eq:A.3}, which we repeat here for clarity,
\begin{equation}
    \label{eq:3.3}
    \GG{a}_m=\sqrt{\frac{m!}{2\G(m+1+\alpha)}}\sim \frac{1}{m^{\alpha/2}},\quad \GG{b}_n=\sqrt{\frac{\G(n+1+\alpha)}{2n!}}\sim n^{\alpha/2},\qquad m,n\in\BB{Z}_+.
\end{equation}
Note that
\begin{equation}
  \label{eq:3.4}
  \GG{a}_m\GG{b}_m\equiv \frac12,\qquad m\in\BB{Z}_+
\end{equation}
-- this will be important in the sequel. 

Theorem~4 presents a necessary condition for $\CC{ind}\, w\geq s$ for $s\geq2$: for a Laguerre weight $w=w_\alpha$ it translates to $\alpha>s-1$. In the remainder of this subsection we wish to prove that for the Laguerre weight function this condition is also sufficient. 

The matrix $\DDD^{\,s}$ is {\em absolutely bounded\/} for $s\geq0$ if 
\begin{equation}
    \label{eq:3.5}
    \sum_{k_1=0}^\infty \sum_{k_2=0}^\infty \cdots \sum_{k_{s-1}=0}^\infty |\DDD_{m,k_1}\DDD_{k_1,k_2}\cdots \DDD_{k_{s-2},k_{s-1}} \DDD_{k_{s-1},n}|<\infty, \qquad m,n\in\BB{Z}_+.
\end{equation}
It is clear that absolute boundedness implies boundedness. 

We assume that $m\geq n+1$ and observe that everything depends on the interplay of the relative sizes of  $k_0=m,k_1,k_2,\ldots,k_{s-1},k_s=n$ because, for example,
\begin{displaymath}
  k_j>k_{j+1}\quad\Rightarrow\quad |\DDD_{k_j,k_{j+1}}|=\GG{a}_{k_j}\GG{b}_{k_{j+1}},\qquad k_j<k_{j+1}\quad\Rightarrow\quad |\DDD_{k_j,k_{j+1}}|=\GG{a}_{k_{j+1}}\GG{b}_{k_j}.
\end{displaymath}
We can disregard the case $k_j=k_{j+1}$ because then $\DDD_{k_j,k_{j+1}}=0$ and the entire product vanishes, hence we assume that always $k_j\neq k_{j+1}$. We use the shorthand $\rA$ for $k_j>k_{j+1}$ and $\lA$ for $k_j<k_{j+1}$. Note that, once $s$ is even then $\DDD^{\,s}$ is symmetric and diagonal elements no longer vanish: in that case we need to consider also the case $m=n$ but the proof is identical.

To illustrate our argument, for $s=4$ we have eight options:\\[3pt]
\begin{tabular}{llcll}
    $\rA\rA\rA=\rA^3$:& $k_0>k_1>k_2>k_3,$ &\qquad\qquad &$\rA\lA\rA$:& $k_0>k_1<k_2>k_3,$\\
    $\rA\rA\lA=\rA^2\lA$:& $k_0>k_1>k_2<k_3,$ && $\rA\lA\lA=\rA\lA^2$:& $k_0>k_1<k_2<k_3,$\\
    $\lA\rA\rA=\lA\rA^2$:& $k_0<k_1>k_2>k_3,$ && $\lA\lA\rA=\lA^2\rA$:& $k_0<k_1<k_2>k_3,$\\
    $\lA\rA\lA$: & $k_0<k_1>k_2<k_3,$ && $\lA\lA\lA=\lA^3$: & $k_0<k_1<k_2<k_3,$
\end{tabular}\\[3pt]
except that $\lA^3$ is impossible because $k_0=m> n=k_2$.

We let $\mathcal{Q}_N$ stand for a generic polynomial of degree exactly $N$ and note for further use the following technical result with a straightforward proof.

\begin{proposition}
  The sum
  \begin{displaymath}
    \sum_{k=1}^K \frac{\mathcal{Q}_N(k)}{k^\alpha}\sim c K^{N-\alpha+1},\qquad K\gg1,\quad N\neq \alpha-1,
  \end{displaymath}
  converges as $K\rightarrow\infty$ if and only if $\alpha>N+1$. Here $c$ is a constant.
\end{proposition}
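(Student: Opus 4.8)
The plan is to analyze the finite sum $S_K = \sum_{k=1}^K \mathcal{Q}_N(k)/k^\alpha$ by comparing it term-by-term with a pure power sum, and then to invoke the elementary integral-comparison (Euler--Maclaurin) estimate for sums of powers. First I would write $\mathcal{Q}_N(k) = c_N k^N + c_{N-1}k^{N-1}+\cdots+c_0$ with $c_N\neq0$ (this is where the hypothesis \emph{degree exactly $N$} is used), so that
\begin{displaymath}
  \frac{\mathcal{Q}_N(k)}{k^\alpha} = \sum_{j=0}^N c_j k^{j-\alpha}.
\end{displaymath}
By linearity it therefore suffices to understand each sum $\sum_{k=1}^K k^{\gamma}$ for a real exponent $\gamma = j-\alpha$, and to track which term dominates as $K\to\infty$.

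Next I would establish the single-exponent asymptotics: for $\gamma\neq-1$ one has $\sum_{k=1}^K k^\gamma \sim K^{\gamma+1}/(\gamma+1)$ when $\gamma>-1$ (the sum diverges, governed by the integral $\int_1^K x^\gamma\,\D x$), and $\sum_{k=1}^K k^\gamma$ converges to a finite constant (a value of the Riemann zeta function) when $\gamma<-1$. The borderline $\gamma=-1$ gives the harmonic sum with logarithmic growth, and this is precisely the case the hypothesis $N\neq\alpha-1$ excludes, since $\gamma=-1$ would mean $j-\alpha=-1$ for the top term $j=N$, i.e.\ $N=\alpha-1$. Applying this to the leading term $j=N$, the exponent is $N-\alpha$: if $N-\alpha>-1$, that is $\alpha<N+1$, the leading term contributes growth of order $K^{N-\alpha+1}$; if $\alpha>N+1$, every exponent $j-\alpha$ (for $j\le N$) is strictly less than $-1$, so every sub-sum converges and $S_K$ converges to a finite constant.

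To finish I would observe that the leading behaviour is dominated by the top exponent: for $\alpha<N+1$ the $j=N$ term grows like $c_N K^{N-\alpha+1}/(N-\alpha+1)$, which is asymptotically larger than every lower term (whose exponent $j-\alpha+1<N-\alpha+1$ gives either slower growth or convergence to a constant), so $S_K\sim c K^{N-\alpha+1}$ with $c=c_N/(N-\alpha+1)$, matching the stated asymptotic. Conversely, for $\alpha>N+1$ the sum is bounded, giving convergence, and for $N-1<\alpha<N+1$ (with $\alpha\neq N+1$, and $N\neq\alpha-1$ excluding the log case) we get divergence; this establishes the claimed \emph{if and only if}.

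The main obstacle is not any single estimate but the careful bookkeeping of the dividing line: one must verify that the leading term's exponent $N-\alpha$ is the relevant one for convergence (rather than some lower-order term), and that the excluded case $N=\alpha-1$ is exactly the logarithmic boundary where the clean power asymptotic $\sim cK^{N-\alpha+1}$ fails. Everything else reduces to the standard integral comparison $\sum_{k=1}^K k^\gamma \asymp \int_1^K x^\gamma\,\D x$, which I would cite rather than reprove.
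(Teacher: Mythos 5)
Your argument is correct and is precisely the ``straightforward proof'' the paper alludes to but omits: expand $\mathcal{Q}_N$ into monomials, reduce to the power sums $\sum_{k=1}^K k^{j-\alpha}$ via integral comparison, and observe that the top exponent $N-\alpha$ governs both the growth rate and the convergence threshold, with $N=\alpha-1$ as the excluded logarithmic boundary. Nothing is missing; your bookkeeping (including noting that lower-order terms grow strictly slower, so they cannot disturb the leading asymptotic or rescue divergence) covers exactly the points the paper's one-line dismissal takes for granted.
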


In general, the main idea is to write a sequence of $\lA$s and $\rA$s in the form 
\begin{displaymath}
  \lA^{i_1}\rA^{j_1}\lA^{i_2}\rA^{j_2}\cdots \lA^{i_t}\rA^{j_t},
\end{displaymath}
where $i_k,j_k\geq0$ and $\sum_{k=1}^t (i_k+j_k)=s$. We call $\lA^r$ a {$\lA$-pre-chain of length $r$\/} and $\rA^r$ a  {$\rA$-pre-chain of length $r$,\/} in other words we decompose each product in \R{eq:3.5} into a sequence of pre-chains. 

Consider first an $\lA$-pre-chain of length $r\geq1$. Because of \R{eq:3.4}, it equals 
\begin{Eqnarray*}
    &&\sum_{k_\ell=0}^{k_{\ell-1}-1} \sum_{k_{\ell+1}=0}^{k_\ell-1} \cdots \sum_{k_{\ell+r-1}=0}^{k_{\ell+r-2}-1} |\DDD_{k_{\ell-1},k_\ell} \DDD_{k_\ell,k_{\ell+1}}\cdots \DDD_{k_{\ell+r-2},k_{\ell+r-1}}|\\
    &=&\sum_{k_\ell=0}^{k_{\ell-1}-1} \sum_{k_{\ell+1}=0}^{k_\ell-1} \cdots \sum_{k_{\ell+r-1}=0}^{k_{\ell+r-2}-1} \prod_{j=\ell-1}^{\ell+r-2} \GG{a}_{k_j}\GG{b}_{k_{j+1}}=\frac{\GG{a}_{k_{\ell-1}}}{2^{r-1}} \sum_{k_\ell=0}^{k_{\ell-1}-1} \sum_{k_{\ell+1}=0}^{k_\ell-1} \cdots \sum_{k_{\ell+r-1}=0}^{k_{\ell+r-2}-1} \GG{b}_{k_{\ell+r-1}}.
\end{Eqnarray*}
We say that $\GG{a}_{k_{\ell-1}}$ and $\GG{b}_{k_{\ell+r-1}}$ are the {\em head\/} and the {\em tail\/} of the pre-chain, respectively.

Likewise, for an $\rA$-pre-chain of length $r\geq1$ we have 
 \begin{Eqnarray*}
    &&\sum_{\scriptscriptstyle k_\ell=k_{\ell-1}+1}^\infty \sum_{\scriptscriptstyle k_{\ell+1}=k_\ell+1}^\infty \cdots \hspace*{-6pt}\sum_{\scriptscriptstyle k_{\ell+r-1}=k_{\ell+r-2}+1}^\infty \hspace*{-16pt}  |\DDD_{k_{\ell-1},k_\ell} \DDD_{k_\ell,k_{\ell+1}}\cdots \DDD_{k_{\ell+r-2},k_{\ell+r-1}}|\\
    &=&\frac{\GG{b}_{k_{\ell-1}}}{2^{r-1}} \sum_{\scriptscriptstyle k_\ell=k_{\ell-1}+1}^\infty \sum_{\scriptscriptstyle k_{\ell+1}=k_\ell+1}^\infty \cdots \hspace*{-6pt}\sum_{\scriptscriptstyle k_{\ell+r-1}=k_{\ell+r-2}+1}^\infty \GG{a}_{k_{\ell+r-1}}.
\end{Eqnarray*}
Now $\GG{b}_{k_{\ell-1}}$ and $\GG{a}_{k_{\ell+r-1}}$ are the head and the tail of the pre-chain, respectively. 

Except for $\ell=0$ and $\ell+r=s$, we join the tail of a pre-chain to the head of the succeeding pre-chain. The outcome are $\lA$-chains and $\rA$-chains. Note thus that a chain has no head, while its tail is multiplied by the head of its successor pre-chain. (In this procedure we lose the head of the leading pre-chain and the tail of the last pre-chain but this makes no difference to the finiteness -- or otherwise -- of the sum)

An $\lA$-chain of length $r$ is of the form
\begin{displaymath}
  \frac{1}{2^{r-1}}\sum_{k_\ell=0}^{k_{\ell-1}-1} \sum_{k_{\ell+1}=0}^{k_\ell-1} \cdots \sum_{k_{\ell+r-1}=0}^{k_{\ell+r-2}-1} \GG{b}_{k_{\ell+r-1}}^2=\frac{1}{2^{r}}\sum_{k_\ell=0}^{k_{\ell-1}-1} \sum_{k_{\ell+1}=0}^{k_\ell-1} \cdots \sum_{k_{\ell+r-1}=0}^{k_{\ell+r-2}-1} \frac{\G(k_\ell+r+\alpha)}{(k_\ell+r-1)!},
\end{displaymath}
a finite sum. Hence, it cannot be a source for unboundedness of the sum \R{eq:3.5}. Matters are different, though, with an $\rA$-chain of length $r$: straightforward algebra and Proposition~6 imply that
\begin{Eqnarray*}
    &&\sum_{k_\ell=k_{\ell-1}+1}^\infty \sum_{k_{\ell+1}=k_\ell+1}^\infty \cdots \sum_{k_{\ell+r-1}=k_{\ell-r-2}+1}^\infty \GG{a}^2_{k_{\ell+r-1}}\\
    &=&\sum_{k_{\ell+r-1}=k_{\ell-1}+r}^\infty \GG{a}^2_{k_{\ell+r-1}} \sum_{k_\ell=k_{\ell-1}+r-1}^{k_{\ell+r-1}-r+1} \sum_{k_{\ell+1}=k_{\ell-1}+r-2}^{k_{\ell+r-1}-r+2}\cdots \sum_{k_{\ell+r-2}=k_{\ell-1}+1}^{k_{\ell+r-1}-1} \!\!1\\
    &=&\sum_{k_{\ell+r-1}=k_{\ell-1}+r}^\infty \GG{a}_{k_{\ell+r-1}}^2 \mathcal{Q}_{r-1}(k_{\ell+r-1}) \sim \sum_{\ell=k_{\ell-1}+r}^\infty \frac{1}{\ell^{\alpha-r}}.
\end{Eqnarray*}
Therefore boundedness takes place if $\alpha-r>1$.

Since the length of any chain is at most $s-1$ and $\rA^{s-1}$ is impossible (recall, $k_0>k_s$),  the maximal length of an $\rA$-chain is $s-2$. We thus deduce that $\alpha>s-1$. 

\begin{theorem}
   $\CC{ind}\,w_\alpha\geq s$ for the Laguerre weight if and only if $\alpha>s-1$.
\end{theorem}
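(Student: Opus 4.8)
The assertion is a biconditional, and the two directions call for genuinely different ingredients, so I would separate them at the outset. The \emph{necessity} of $\alpha>s-1$ is essentially free: the Laguerre weight $w_\alpha(x)=x^\alpha\ee^{-x}\chi_{(0,\infty)}(x)$ lives on the semi-infinite interval $(0,\infty)$, its only finite endpoint is $a=0$, and there $w_\alpha(x)\sim x^\alpha$, so $\alpha$ is precisely the endpoint exponent that Theorem~4 constrains. In its $b=\infty$ form, Theorem~4 states that $\CC{ind}\,w\geq s$ forces $\alpha>s-1$; I would dispatch the ``only if'' direction by this single citation and concentrate all effort on sufficiency.

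For sufficiency I would prove the stronger statement that $\alpha>s-1$ makes $\DDD^{\,k}$ \emph{absolutely} bounded, in the sense of \R{eq:3.5}, for every $k=1,\ldots,s$; since absolute boundedness implies boundedness, this yields $\CC{ind}\,w_\alpha\geq s$ at once. The computation rests on three facts already in hand: the separability \R{eq:3.1}, the asymptotics $\GG{a}_m\sim m^{-\alpha/2}$ and $\GG{b}_n\sim n^{\alpha/2}$ of \R{eq:3.3}, and above all the collapsing identity $\GG{a}_m\GG{b}_m\equiv\frac12$ of \R{eq:3.4}. Taking $m\geq n+1$ without loss of generality -- the diagonal case $m=n$, relevant only for even $k$ when $\DDD^{\,k}$ is symmetric, being handled identically -- I would classify each summand of \R{eq:3.5} by its signature of $\lA$ and $\rA$ steps and invoke the pre-chain/chain decomposition developed above.

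The decisive point is the convergence dichotomy already extracted from that decomposition, now read in the reverse direction. Because \R{eq:3.4} telescopes every interior factor to $\frac12$, each $\lA$-chain collapses to a genuinely \emph{finite} nested sum and can never threaten convergence, whereas each $\rA$-chain of length $r$ reduces, via Proposition~6, to a tail series that converges if and only if $\alpha>r+1$. The hypothesis $m>n$ forbids the fully monotone signature that would be needed for an $\rA$-chain to span the whole profile, so the preceding analysis caps the length of any $\rA$-chain at $s-2$. Hence the hypothesis $\alpha>s-1$, exactly the threshold $\alpha>(s-2)+1$, already secures convergence of every chain occurring in $\DDD^{\,s}$; for the lower powers $k<s$ the maximal $\rA$-chain is shorter still, so the one hypothesis $\alpha>s-1\geq k-1$ simultaneously covers all $k\leq s$, which is exactly what sufficiency demands.

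The step I expect to demand the most care is the combinatorial bookkeeping of the chain decomposition. I would need to verify that, when the tail of one pre-chain is glued to the head of its successor, the identity \R{eq:3.4} really does cancel the linking factors, so that each turning point of the signature contributes a clean squared coefficient -- an $\GG{a}^2$ decaying like $k^{-\alpha}$ at a valley, a $\GG{b}^2$ at a peak -- leaving every chain with no stray unbounded factor; and that the convergent inner chain-sums, once nested, still produce a finite multiple sum rather than reintroducing growth through the outer indices. Establishing rigorously that $m>n$ limits the longest $\rA$-chain to $s-2$, and not merely to $s-1$, is the delicate count on which the sharp threshold $\alpha>s-1$ turns; once it is secured, the convergence criterion is a direct appeal to Proposition~6 and the remainder is routine.
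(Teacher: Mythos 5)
Your proposal is correct and takes essentially the same route as the paper: necessity is dispatched by citing Theorem~4 in its $b=\infty$ form, and sufficiency is exactly the paper's argument of absolute boundedness in \R{eq:3.5} via the pre-chain/chain decomposition, the collapsing identity \R{eq:3.4}, Proposition~6, and the observation that $k_0>k_s$ rules out the fully monotone signature and caps the dangerous chains, giving the threshold $\alpha>s-1$. The only difference is organisational: the paper carries out the chain analysis in the text preceding the theorem and leaves the formal proof as a two-line remark, whereas you fold that analysis (including the delicate gluing and chain-length counts, which you rightly flag) into the proof itself.
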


\begin{proof}
  The necessity has been already proved in Theorem~4, while sufficiency follows because absolute boundedness in \R{eq:3.5} implies boundedness.
\end{proof}

\begin{figure}[htb]
  \begin{center}
    \vspace*{25pt}
    \begin{picture}(0,0)
    \put (56,122) {\footnotesize$s=1$}
    \put (176,122) {\footnotesize$s=2$}
    \put (296,122) {\footnotesize$s=3$}
    \put (-5,50) {\rotatebox{90}{\footnotesize$\alpha=1$}}
    \put (-5,-68) {\rotatebox{90}{\footnotesize$\alpha=2$}}
    \put (-5,-186) {\rotatebox{90}{\footnotesize$\alpha=4$}}
    \put (-30,-130) {\rotatebox{90}{\footnotesize\bf Laguerre differentiation matrix}}
  \end{picture}
  \includegraphics[width=120pt]{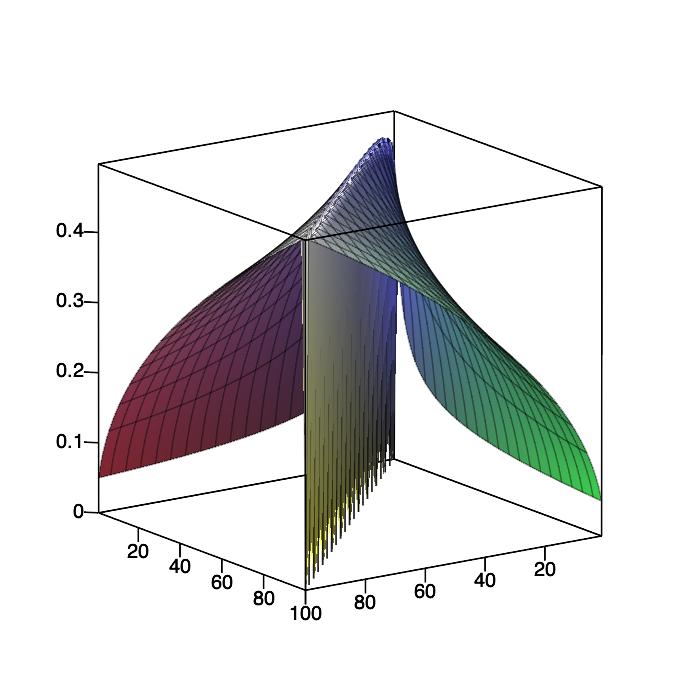}\hspace{0pt}\includegraphics[width=120pt]{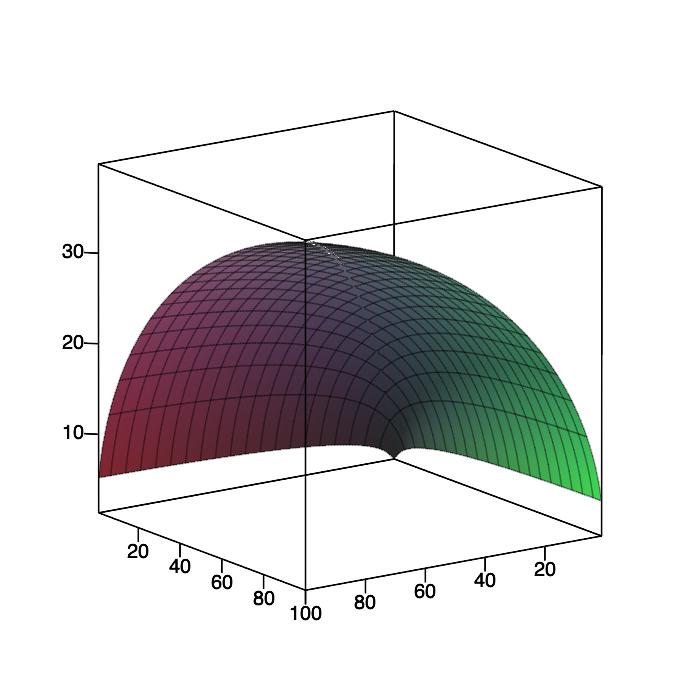}\hspace{0pt}\includegraphics[width=120pt]{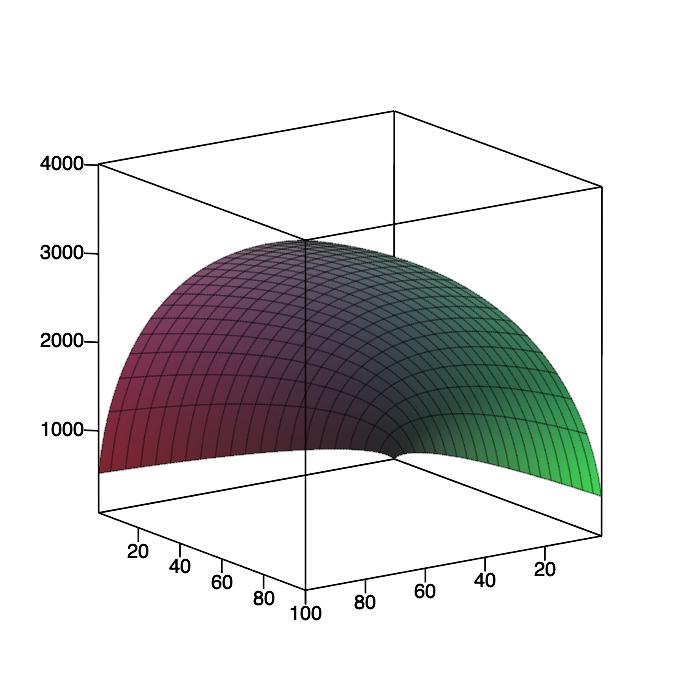} 
  \includegraphics[width=120pt]{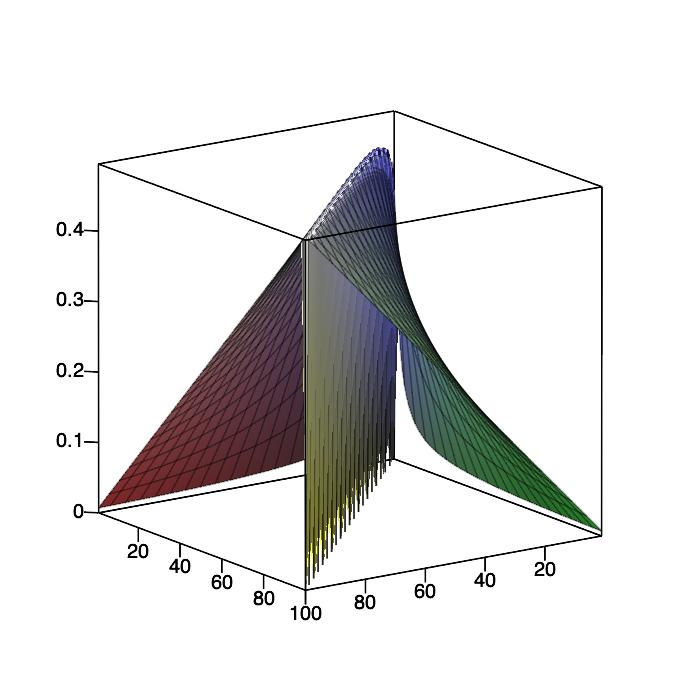}\hspace{0pt}\includegraphics[width=120pt]{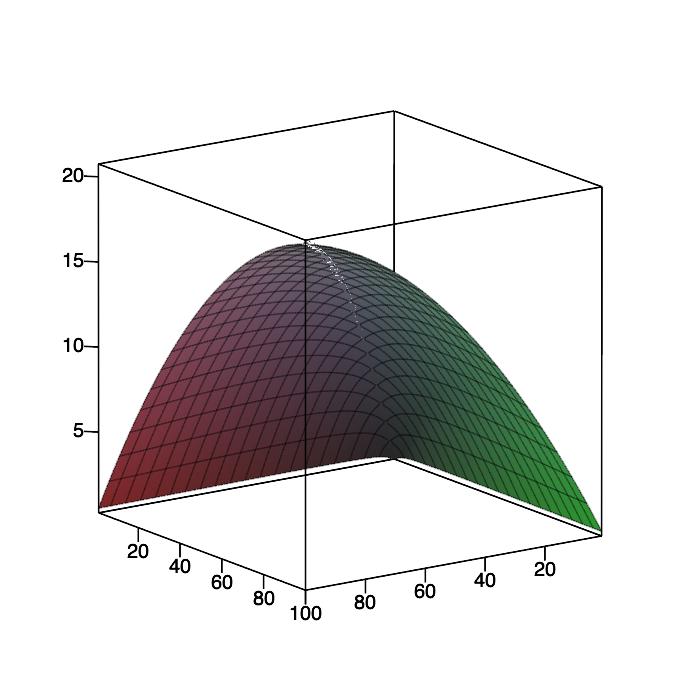}\hspace{0pt}\includegraphics[width=120pt]{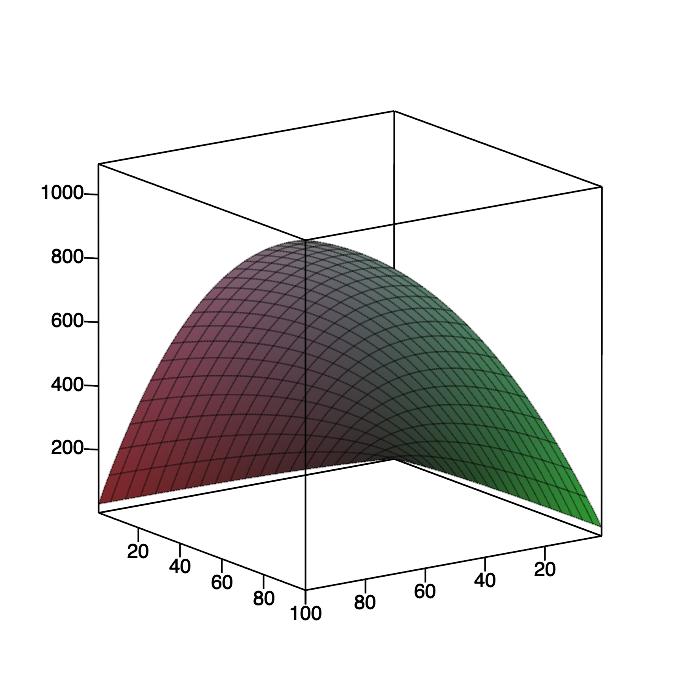} 
  \includegraphics[width=120pt]{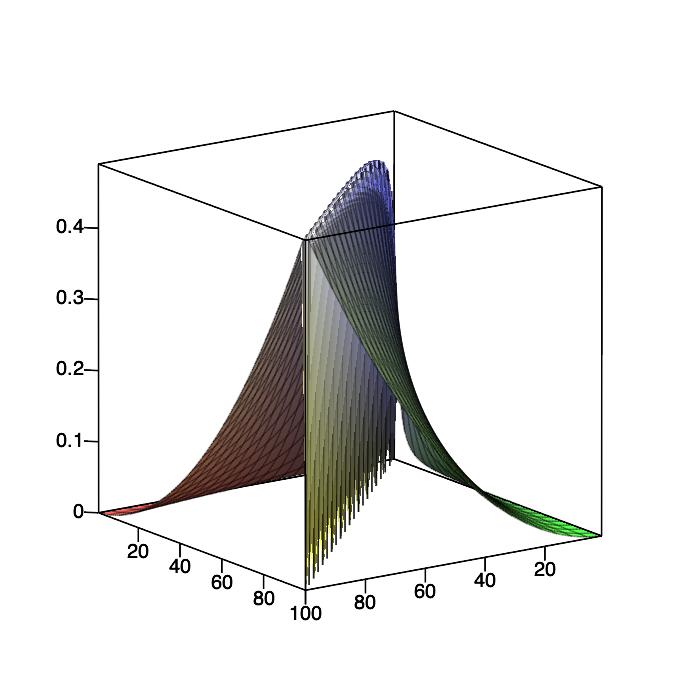}\hspace{0pt}\includegraphics[width=120pt]{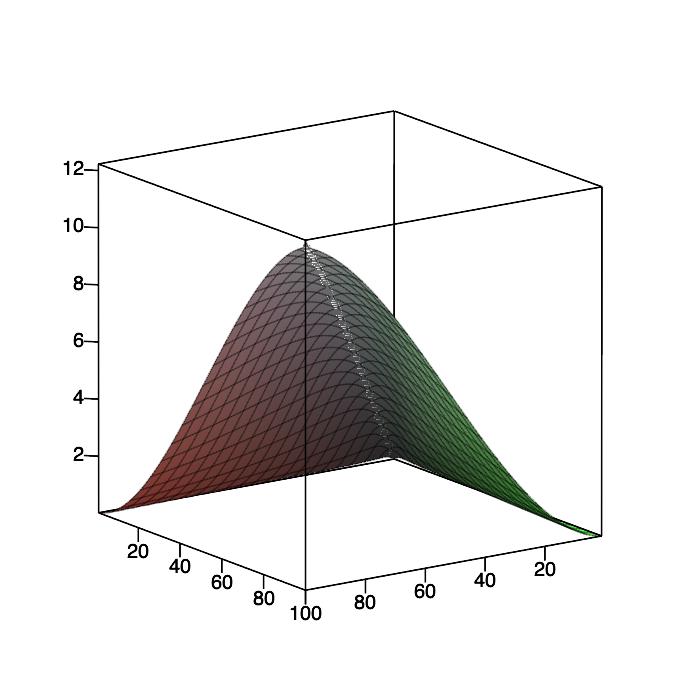}\hspace{0pt}\includegraphics[width=120pt]{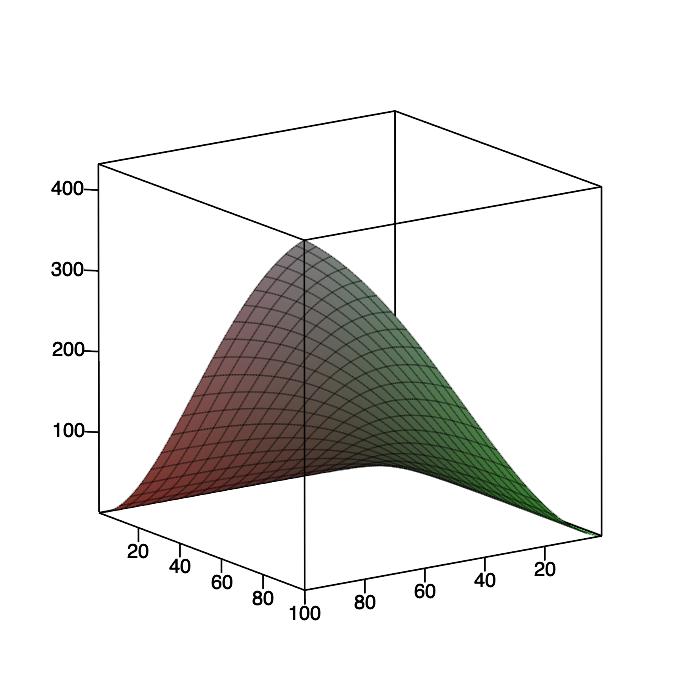}
    \caption{Laguerre W-functions: The magnitude of $\DDD^{\,s}$ for different values of $\alpha$ and $1\leq s\leq3$}
    \label{fig:3.1}
  \end{center}
\end{figure}

In Fig.~\ref{fig:3.1} we display the absolute values of the entries of $\DDD^{\,s}$ for different values of $\alpha$ and $s$. The computation involves infinite matrices, hence infinite products which need be truncated in computation. Thus, we compute $300\times300$ matrices and their powers, while displaying just their $100\times100$ section, since this minimises the truncation effects, For $s=1$ all differentiation matrices are bounded and of a moderate size, the sole difference is that, as $\alpha$ grows, the matrix becomes more `centred' about the diagonal. However, already for $s=2$ the difference is discernible. For $\alpha=1$ we are right on the boundary of $\alpha>s-1$ (on its wrong side!) and the size of $\DDD^{\,2}$ grows rapidly: had we displayed a section of an $M\times M$ matrix for $M\gg300$, the magnitude would have grown at a logarithmic rate, as indicated by the proof of absolute boundedness. Once $\alpha>1$, the magnitude grows at a slower rate and would remain bounded for $M\rightarrow\infty$. Finally, for $s=3$ the cases $\alpha=1$ and $\alpha=2$ correspond to polynomial and logarithmic growth, respectively, and this is apparent in the figure. Finally, for $\alpha=4$ the rate of growth slows down and it is persuasive that the magnitude remains bounded as $M\rightarrow\infty$. Note that even in a `good' $\alpha$ regime the magnitude, while decaying along rows and columns, grows along diagonals. We refer to the discussion following Fig.~\ref{fig:3.2} for an explanation of this behaviour, commenting here that this phenomenon follows from $\GG{a}_m\sim m^{-\alpha/2}$ and $\GG{b}_n\sim n^{\alpha/2}$.

It follows from Theorem~7 that, once we approximate functions in $\HHH{s}(0,\infty)$, we need to choose $\alpha>s-1$. However, there is much more to the choice of a good $\alpha$ and we defer its discussion to Section~4. As it turns out, the quality of approximation is exceedingly sensitive to the right choice and numerical results indicate that there exists a `sweet spot' that brings about substantially improved quality of approximation. 

\subsection{The ultraspherical family}

The ultraspherical weight\footnote{Also known, subject to different scaling, as the Gegenbauer weight \cite[p.~276]{rainville60sf}.}  a special case of the {\em Jacobi weight,\/} is $w_\alpha(x)=(1-x^2)^\alpha \chi_{(-1,1)}(x)$, $\alpha>1$ -- in our case  the requirement $w_\alpha(\pm1)=0$ restricts $\alpha$ to the range $(0,\infty)$. We have
\begin{Eqnarray*}
  p_n(x)&=&g_n^\alpha\PP_n^{(\alpha,\alpha)}(x),\\
  \varphi_n(x)&=&g_n^\alpha (1-x^2)^{\alpha/2} \PP_n^{(\alpha,\alpha)}(x),\qquad n\in\BB{Z}_+,
\end{Eqnarray*}
where the constant 
\begin{displaymath}
  g_n^\alpha=\frac{\sqrt{\frac12 n!(2n+2\alpha+1)\G(n+2\alpha+1)}}{2^\alpha\G(n+\alpha+1)}
\end{displaymath}
orthonormalises an ultraspherical polynomial \cite[p.~260]{rainville60sf}. Recalling the identity \R{eq:2.3}, we let
\begin{displaymath}
  \EEE_{m,n}=\frac{1}{\alpha g_m^\alpha g_n^\alpha}\DDD_{m,n}=\int_{-1}^1 (1-x^2)^{\alpha-1} x\PP_m^{(\alpha,\alpha)}(x)\PP_n^{(\alpha,\alpha)}(x)\D x.
\end{displaymath}
It is sufficient to derive the $\EEE_{m,n}$s explicitly and prove that $\EEE$ is symmetrically separable. We recall that our interest is in odd values of $m+n$ and  assume without loss of generality that $m\geq n+1$

Let
\begin{displaymath}
  S_{m,n}^\alpha=\int_{-1}^1 (1-x^2)^{\alpha-1}\PP_m^{(\alpha,\alpha)}(x) \PP_n^{(\alpha,\alpha)}(x)\D x,
\end{displaymath}
noting that $S_{m,n}^\alpha=0$ if $m+n$ is odd. The three-term recurrence relation for orthonormal ultraspherical polynomials is
\begin{equation}
  \label{eq:3.6}
  x\PP_m^{(\alpha,\alpha)}(x)=\frac{(m+1)(m+2\alpha+1)}{(m+1+\alpha)(2m+2\alpha+1)} \PP_{m+1}^{(\alpha,\alpha)}(x) +\frac{m+\alpha}{2m+2\alpha+1} \PP_{m-1}^{(\alpha,\alpha)}(x),
\end{equation}
as can be easily confirmed from \cite[p.~263]{rainville60sf}. Therefore 
\begin{Eqnarray}
  \nonumber
  &&\EEE_{m,n}\\
  \nonumber
  &=&\int_{-1}^1 (1-x^2)^{\alpha-1} \left[\frac{(m+1)(m+2\alpha+1)}{(m+1+\alpha)(2m+2\alpha+1)} \PP_{m+1}^{(\alpha,\alpha)} +\frac{m+\alpha}{2m+2\alpha+1} \PP_{m-1}^{(\alpha,\alpha)}\right]\! \PP_n^{(\alpha,\alpha)}\D x\\
  \label{eq:3.7}
  &=&\frac{(m+1)(m+2\alpha+1)}{(m+1+\alpha)(2m+2\alpha+1)} S_{m+1,n}^\alpha +\frac{m+\alpha}{2m+2\alpha+1} S_{m-1,n}^\alpha.
\end{Eqnarray}

Our next task is determining the explicit form of $S_{m,n}^\alpha$ for even $m+n$  and, without loss of generality, $m\geq n$. This is accomplished in Appendix~B and results in
\begin{displaymath}
  S_{m,n}^\alpha=\frac{4^\alpha}{\alpha} \frac{\G(m+1+\alpha)\G(n+1+\alpha)}{n!\G(m+1+2\alpha)},\qquad m\geq n,\; m+n\mbox{\ even}.
\end{displaymath}
We conclude from \R{eq:3.7} that 
\begin{displaymath}
  \EEE_{m,n}=\frac{4^\alpha}{\alpha} \frac{\G(m+1+\alpha)\G(n+1+\alpha)}{n!\G(m+1+2\alpha)},\qquad m\geq n,\; m+n\mbox{\ odd}
\end{displaymath}
and
\begin{equation}
  \label{eq:3.8}
  \DDD_{m,n}=\alpha g_m^\alpha g_n^\alpha \EEE_{m,n}=\frac12 \sqrt{\frac{m!(2m+2\alpha+1)(2n+2\alpha+1)\G(n+1+2\alpha)}{n!\G(m+1+2\alpha)}}
\end{equation}
is valid for all odd $m+n$, $m\geq n+1$ -- once $n\geq m+1$, we need to invert the sign. (Of course, $\DDD_{m,n}=0$ once $m+n$ is even.)

Our first conclusion is that the measure is symmetrically separable with 
\begin{equation}
  \label{eq:3.9}
  \GG{a}_m=\sqrt{\frac{m!(2m+2\alpha+1)}{2\G(m+1+2\alpha)}},\qquad \GG{b}_n=\sqrt{\frac{(2n+2\alpha+1)\G(n+1+2\alpha)}{2n!}}.
\end{equation}

The next conclusion is that the rate of growth (or decay) is dramatically different along the rows and the columns of $\DDD$. It follows from \R{eq:3.8} and the standard Stirling formula \cite[5.11.3]{dlmf} that
\begin{displaymath}
  \DDD_{m,n}\sim \frac{n^{\alpha+\frac12}}{m^{\alpha-\frac12}},\qquad m,n\gg1,\quad m\geq n+1.
\end{displaymath}
Therefore the elements of the differentiation matrix decay geometrically (at any rate, for $\alpha>\frac12$) along rows (and, because of skew symmetry, columns) and increase geometrically along diagonals. Note that, forming powers of $\DDD$, it is the decay along rows and columns that allows for  boundedness. (Incidentally, it can be proved using special functions  that $(\DDD^{\,2})_{0,0}=\alpha(2\alpha+1)/[4(\alpha-1)]$, driving home the fact, already known from Theorem~5, that $\alpha>1$ is necessary and sufficient for boundedness. We leave the proof, which plays no further role in our narrative, as an exercise for the reader.)

\begin{figure}[htb]
  \begin{center}
    \vspace*{25pt}
    \begin{picture}(0,0)
    \put (56,122) {\footnotesize$s=1$}
    \put (176,122) {\footnotesize$s=2$}
    \put (296,122) {\footnotesize$s=3$}
    \put (-5,50) {\rotatebox{90}{\footnotesize$\alpha=1$}}
    \put (-5,-68) {\rotatebox{90}{\footnotesize$\alpha=2$}}
    \put (-5,-186) {\rotatebox{90}{\footnotesize$\alpha=4$}}
    \put (-30,-130) {\rotatebox{90}{\footnotesize\bf Ultraspherical differentiation matrix}}
  \end{picture}
  \includegraphics[width=120pt]{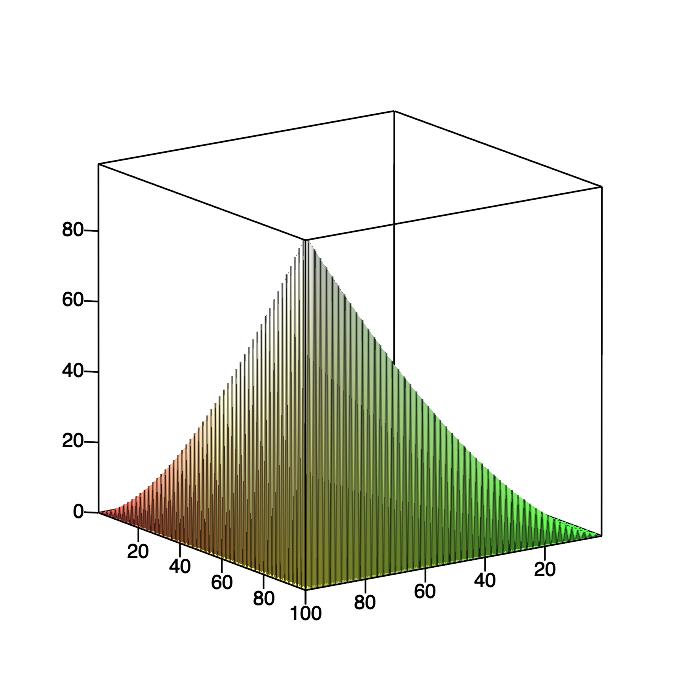}\hspace{0pt}\includegraphics[width=120pt]{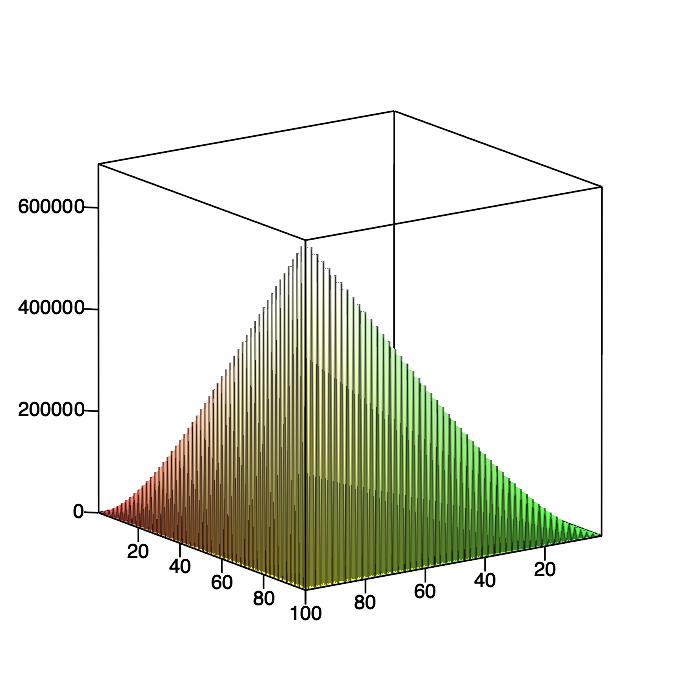}\hspace{0pt}\includegraphics[width=120pt]{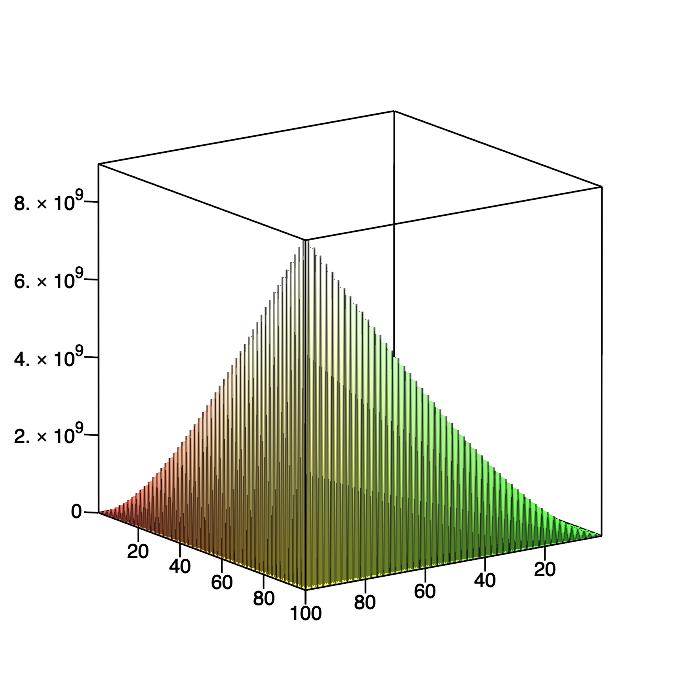} 
  \includegraphics[width=120pt]{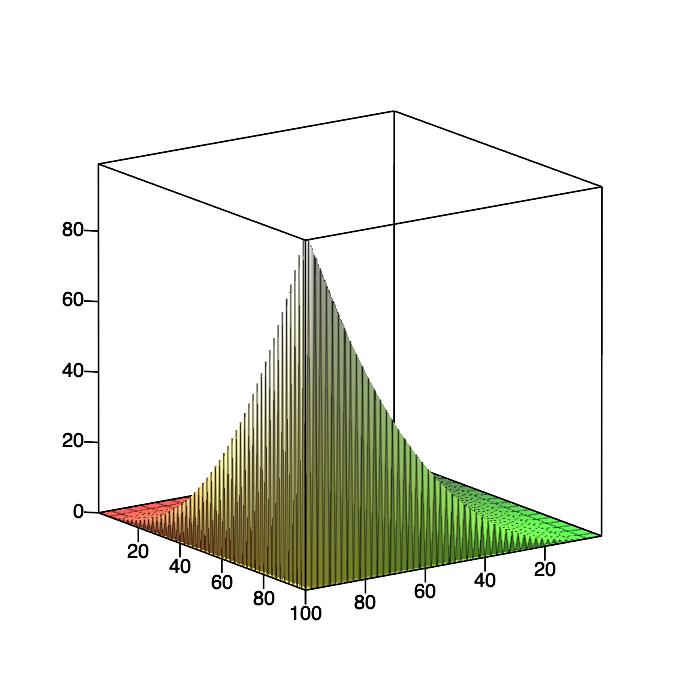}\hspace{0pt}\includegraphics[width=120pt]{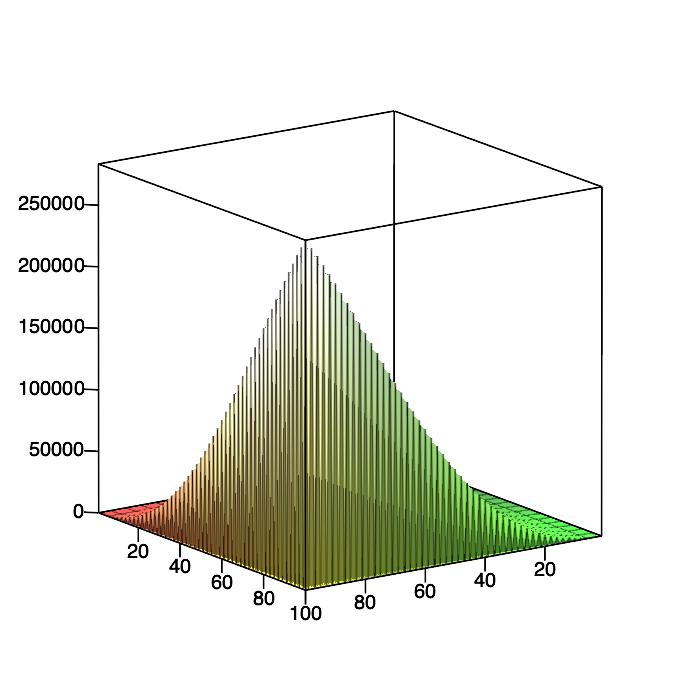}\hspace{0pt}\includegraphics[width=120pt]{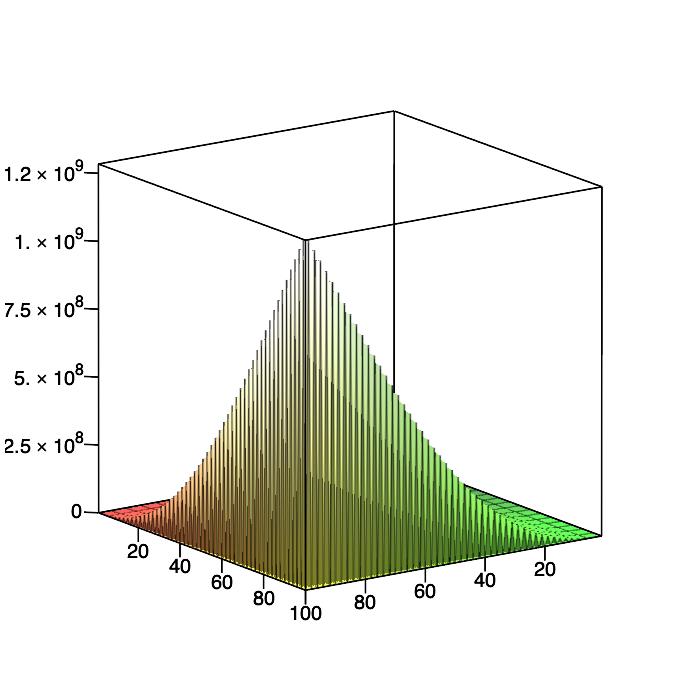} 
  \includegraphics[width=120pt]{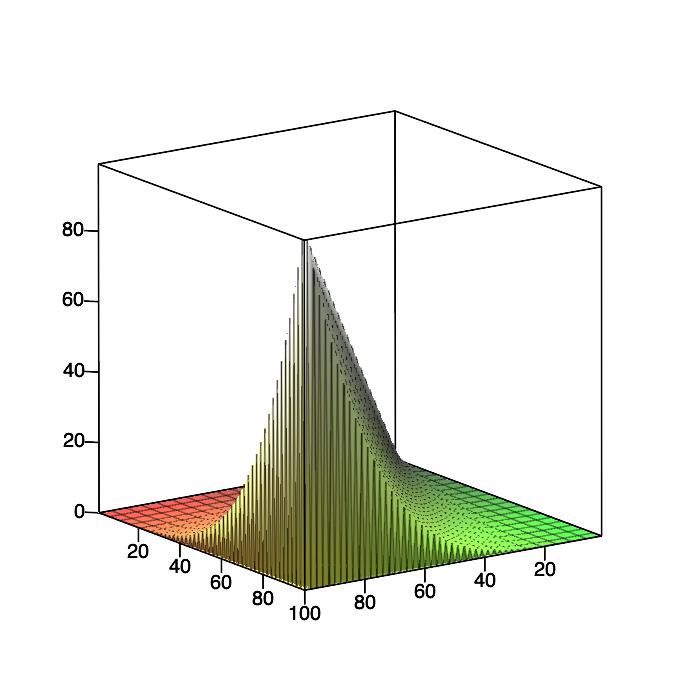}\hspace{0pt}\includegraphics[width=120pt]{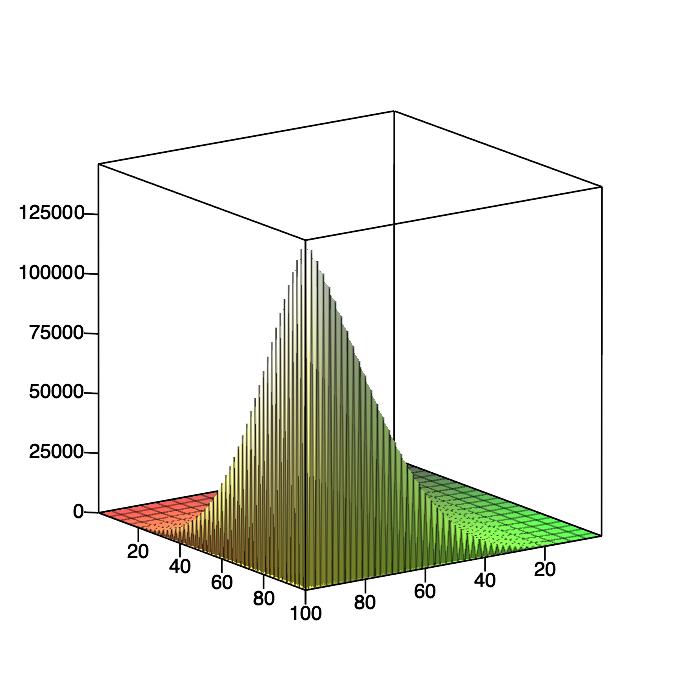}\hspace{0pt}\includegraphics[width=120pt]{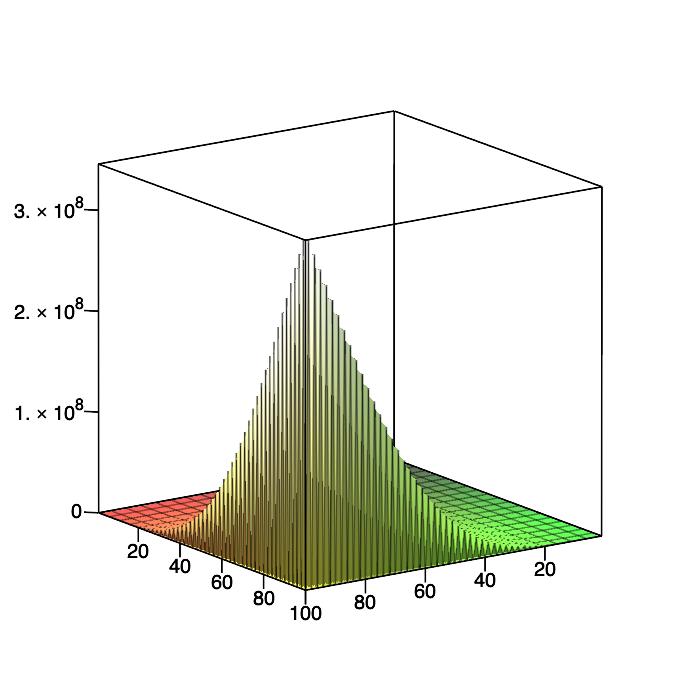}
    \caption{Ultraspherical W-functions: The magnitude of $\DDD^{\,s}$ for different values of $\alpha$ and $1\leq s\leq3$}
    \label{fig:3.2}
  \end{center}
\end{figure}

Fig.~\ref{fig:3.2} displays the magnitude of the powers of differentiation matrix for ultraspherical weights and different values of $\alpha$ and $s$, using the same rules of engagement as in Fig.~\ref{fig:3.1}. Two trends are discernible, both following from our discussion. Firstly, the decay along rows accelerates as $\alpha$ grows and $\DDD^{\,s}$ is more concentrated near the diagonal. Secondly, the terms along the diagonal (of course, with $m+n$ of the right parity) grow the fastest. Their rate of growth is rapid (and grows with $\alpha$) but this need not be a problem, at any rate once we approximate sufficiently smooth functions. In that case $\DDD$, its powers and possibly functions (e.g.\ $\exp(h\DDD)$) act on the expansion coefficients of functions in the underlying basis $\Phi$. Provided these functions are sufficiently smooth, it is plausible that these coefficients decay very rapidly and, for analytic functions, at an exponential rate. (We defer to Section~4 for more substantive discussion of convergence.) Thus,  large terms along the diagonal will multiply small terms in a vector of expansion  coefficients -- something that might conceivably cause loss of accuracy for truly huge matrices but which is probably negligible in practice.

Similarly to Laguerre weights, we now seek to prove that Theorem~4 provides also a sufficient condition for $\CC{ind}\,w_\alpha\geq s$ for ultraspherical weights, i.e.\ that $\alpha>s-1$ implies that $\DDD^{\,s}$ is bounded. Our method of proof is similar to that of Theorem~7, except that we need to account for a number of differences: firstly $\DDD_{m,n}$ can be nonzero only when $m+n$ is even, secondly, we have symmetric separability in place of separability and thirdly \R{eq:3.4} is no longer true and need be replaced by 
\begin{equation}
  \label{eq:3.10}
  \GG{a}_m\GG{b}_m=m+\alpha+\frac12.
\end{equation}

Letting again $k_0=m$, $k_s=n$, where $m\geq n+1$ (or $m\geq n$ once $s$ is even), we need to replace \R{eq:3.5} by 
\begin{displaymath}
  {\sum_{k_1=0}^\infty}^{\!\star} {\sum_{k_2=0}^\infty }^{\!\star} \cdots {\sum_{k_{s-1}=0}^\infty}^{\!\!\!\star}  |\DDD_{k_0,k_1}\DDD_{k_1,k_2}\cdots \DDD_{k_{s-2},k_{s-1}} \DDD_{k_{s-1},k_s}|<\infty,
\end{displaymath}
where the star means that we sum only over pairs $(k_{i-1},k_{i})$ such that $k_{i-1}+k_{i}$ is odd. Note that Proposition~6 remains true for the `starred sum' except that the constant (of which we care little) is different. 

We again commence with $\lA$- and $\rA$-pre-chains. Little changes for a $\rA$ chain, since the sum remains finite. The only possible challenge to boundedness may originate in a $\lA$ chain. We analyse a $\lA$-pre-chain using \R{eq:3.10},
\begin{Eqnarray*}
  &&{\sum_{k_\ell=k_{\ell-1}+1}^\infty}^{\hspace*{-10pt}\star}\hspace*{8pt}  {\sum_{k_{\ell+1}=k_{\ell}+1}^\infty}^{\hspace*{-10pt}\star} \hspace*{8pt}  \cdots {\sum_{k_{\ell+r-1}=k_{\ell+r-2}+1}^\infty}^{\hspace*{-22pt}\star}\hspace*{22pt} |\DDD_{k_\ell,k_{\ell-1}} \DDD_{k_{\ell+1},k_\ell} \cdots \DDD_{k_{\ell+r-1},k_{\ell+r-2}}|\\
  &=&{\sum_{k_\ell=k_{\ell-1}+1}^\infty}^{\hspace*{-10pt}\star}\hspace*{8pt}  {\sum_{k_{\ell+1}=k_{\ell}+1}^\infty}^{\hspace*{-10pt}\star} \hspace*{8pt}  \cdots {\sum_{k_{\ell+r-1}=k_{\ell+r-2}+1}^\infty}^{\hspace*{-22pt}\star}\hspace*{22pt} \GG{a}_{k_j}\GG{b}_{k_{j-1}}\\
  &=&\GG{b}_{k_{\ell-1}} {\sum_{k_\ell=k_{\ell-1}+1}^\infty}^{\hspace*{-10pt}\star}\hspace*{2pt} \left(k_\ell+\alpha+\frac12\right) {\sum_{k_{\ell+1}=k_{\ell}+1}^\infty}^{\hspace*{-10pt}\star} \hspace*{2pt} \left(k_{\ell+1}+\alpha+\frac12\right)  \cdots {\sum_{k_{\ell+r-1}=k_{\ell+r-2}+1}^\infty}^{\hspace*{-22pt}\star}\hspace*{18pt} \GG{a}_{k_{\ell+r-1}},
\end{Eqnarray*}
and, using Proposition~6,  transition seamlessly to a $\lA$-chain, while disregarding lower-order terms,
\begin{Eqnarray*}
  &&{\sum_{k_\ell=k_{\ell-1}+1}^\infty}^{\hspace*{-12pt}\star}\hspace*{2pt} \left(k_\ell+\alpha+\frac12\right) {\sum_{k_{\ell+1}=k_{\ell}+1}^\infty}^{\hspace*{-10pt}\star} \hspace*{2pt} \left(k_{\ell+1}+\alpha+\frac12\right)  \cdots {\sum_{k_{\ell+r-1}=k_{\ell+r-2}+1}^\infty}^{\hspace*{-22pt}\star}\hspace*{18pt} \GG{a}_{k_{\ell+r-1}}^2\\
  &=&{\sum_{k_{\ell+r-1}=k_{\ell-1}+r}^\infty}^{\hspace*{-20pt}\star} \GG{a}_{k_{\ell+r-1}}^2 {\sum_{k_\ell=k_{\ell-1}+r-1}^{k_{\ell+r-1}-r+1}}^{\hspace*{-0pt}\star} k_\ell  {\sum_{k_{\ell+1}=k_{\ell-1}+r-2}^{k_{\ell+r-1}-r+2}}^{\hspace*{-4pt}\star}\ k_{\ell+1} \cdots {\sum_{k_{\ell+r-2}=k_{\ell-1}+1}^{k_{\ell+r-1}-1}}^{\hspace*{-8pt}\star}\hspace*{6pt} k_{\ell+r-2}\\
  &\sim& {\sum_{k_{\ell+r-1}=k_{\ell-1}+r}^\infty}^{\hspace*{-20pt}\star} \GG{a}_{k_{\ell+r-1}}^2 \mathcal{Q}_{2r-2}(k_{\ell+r-1}) \sim {\sum_{k=k_{\ell-1}+r}^\infty}^{\hspace*{-10pt}\star} \hspace*{8pt}\frac{1}{k^{2\alpha-2r+1}}.
\end{Eqnarray*}
Consequently $\alpha>r$ is necessary and sufficient for convergence for each $\lA$-chain. Since the length of an $\lA$-chain is at most $s-1$, we deduce, similarly to Theorem~7 that

\begin{theorem}
  $\CC{ind}\,w_\alpha\geq s$ for the ultraspherical weight if an only if $\alpha>s-1$.
\end{theorem}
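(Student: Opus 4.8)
The plan is to mirror the proof of Theorem~7, splitting the equivalence into its two halves and borrowing the chain machinery developed immediately above. Necessity I would dispose of in one line: Theorem~4 already forces $\alpha,\beta>s-1$ whenever $\CC{ind}\,w_\alpha\geq s$ in a finite interval, and since the ultraspherical weight factorises as $(1-x)^\alpha(1+x)^\alpha$ we have $\beta=\alpha$, so the requirement collapses to $\alpha>s-1$. Everything of substance therefore lies in sufficiency: I must show that $\alpha>s-1$ renders $\DDD^{\,s}$ absolutely bounded, whence bounded.

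For sufficiency I would start from symmetric separability \R{eq:3.8}--\R{eq:3.9} and the asymptotics $\GG{a}_m\sim m^{1/2-\alpha}$, $\GG{b}_n\sim n^{1/2+\alpha}$, write out the starred absolute-boundedness sum over admissible paths $k_0=m,\ldots,k_s=n$, and decompose each path into maximal $\lA$- and $\rA$-pre-chains as above. The $\rA$-pre-chains are finite sums and are harmless, so the finiteness of the whole expression hinges entirely on the $\lA$-chains. Invoking the modified head--tail identity \R{eq:3.10} (which replaces the constant $\tfrac12$ of the Laguerre case), each of the $r$ nested summations inside a $\lA$-chain inserts a linear factor $k+\alpha+\tfrac12$; after collapsing the chain one is left with a polynomial $\mathcal{Q}_{2r-2}$ multiplying the tail $\GG{a}^2_k\sim k^{1-2\alpha}$, and Proposition~6 converts this into a tail series of order $\sum k^{-(2\alpha-2r+1)}$, which converges if and only if $\alpha>r$.

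The decisive step is the combinatorial bound on $\lA$-chain length. Because any admissible path satisfies $k_0=m\geq n=k_s$, a $\lA$-run spanning all $s$ steps would force $m<n$ and is impossible, so every $\lA$-chain has length at most $s-1$; conversely the configuration $m<k_1<\cdots<k_{s-1}>n$ realises a divergence-carrying $\lA$-chain of length exactly $s-1$ whose tail index runs to infinity. Substituting the worst case $r=s-1$ into the threshold $\alpha>r$ yields $\alpha>s-1$, matching the necessary condition and closing the argument: under $\alpha>s-1$ every chain converges, the starred sum is finite, and $\DDD^{\,s}$ is bounded.

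I expect the genuine obstacle to be bookkeeping rather than ideas. One must carry the parity constraint (the stars on the sums) through every nested summation and check that restricting to $k_{i-1}+k_i$ odd only perturbs the immaterial constants in Proposition~6, leaving the governing power of $k$ untouched. The other delicate point is the junction accounting when pre-chains are concatenated: unlike the Laguerre case, where $\GG{a}_m\GG{b}_m\equiv\tfrac12$ makes every internal contraction contribute a constant, here $\GG{a}_m\GG{b}_m=m+\alpha+\tfrac12$ grows linearly, so I would verify carefully that a length-$r$ chain really produces degree $2r-2$ (and not, say, $r-1$) in order to land on the sharp exponent $2\alpha-2r+1$ and hence the correct threshold $\alpha>s-1$.
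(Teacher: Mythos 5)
Your proposal is correct and takes essentially the same route as the paper's proof: necessity from Theorem~4 (with $\beta=\alpha$ for $(1-x)^\alpha(1+x)^\alpha$), and sufficiency via the starred absolute-boundedness sum, the pre-chain/chain decomposition in which the descending chains are finite and harmless, the modified contraction identity $\GG{a}_m\GG{b}_m=m+\alpha+\frac12$ of \R{eq:3.10} producing the polynomial $\mathcal{Q}_{2r-2}$ against the tail $\GG{a}_k^2\sim k^{1-2\alpha}$, the resulting per-chain threshold $\alpha>r$, and the maximal ascending-chain length $s-1$. The two bookkeeping points you flag are precisely the adjustments the paper itself makes relative to Theorem~7: the parity stars only change the immaterial constant in Proposition~6, and the $r-1$ internal junctions together with the nested summation ranges do yield degree $2r-2$, landing on the exponent $2\alpha-2r+1$ and hence $\alpha>s-1$.
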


Both Theorems~7 and~8 present the same inequality. This is not surprising since, for both weights, $\alpha$ measures the `strength' of zero at the endpoint(s).

\subsection{Counterexamples: generalised Hermite and Konoplev\\ weights}

Ultraspherical and Laguerre weights are the obvious and most elementary choice in the intervals $(-1,1)$ and $(0,\infty)$ respectively and they are both separable in the sense of this paper. This might lead to an impression that separability is ubiquitous: this would be highly misleading. 

\begin{lemma}
  Let
  \begin{Eqnarray}
    \label{eq:3.11}
    \iota_{m,n}&=&\DDD_{m,n}\DDD_{m+1,n+1}-\DDD_{m+1,n}\DDD_{m,n+1},\\
    \label{eq:3.12}
    \check{\iota}_{m,n}&=&\DDD_{m,n}\DDD_{m+2,n+2}-\DDD_{m+2,n}\DDD_{m,n+2}.
  \end{Eqnarray}
  Separability implies that $\iota_{m,n}=0$ for all $m\geq n+2$, while symmetric separability implies that $\check{\iota}_{m,n}=0$ for all $m+n$ odd, $m\geq n+2$.
\end{lemma}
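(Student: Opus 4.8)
The plan is to recognise that both $\iota_{m,n}$ and $\check\iota_{m,n}$ are $2\times2$ determinants formed from a block of $\DDD$, and that under (symmetric) separability the relevant block is rank one, so its determinant vanishes identically. The entire content of the lemma is therefore the index bookkeeping needed to confirm that all four entries in each determinant lie in the strictly sub-diagonal region where the product form $\pm\GG{a}_i\GG{b}_j$ applies; once this is checked, the determinant collapses by a one-line cancellation.

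For the separable case I would fix $m\geq n+2$ and verify that each of the four index pairs appearing in $\iota_{m,n}$, namely $(m,n)$, $(m+1,n+1)$, $(m+1,n)$ and $(m,n+1)$, has its row index at least its column index plus one. Indeed $m\geq n+2$ yields $m\geq n+1$, $m+1\geq(n+1)+1$, $m+1\geq n+1$ and $m\geq(n+1)+1$ respectively. Hence by \R{eq:3.1} every entry equals $\GG{a}_i\GG{b}_j$ with $i,j$ its row and column, and
\begin{displaymath}
  \iota_{m,n}=(\GG{a}_m\GG{b}_n)(\GG{a}_{m+1}\GG{b}_{n+1})-(\GG{a}_{m+1}\GG{b}_n)(\GG{a}_m\GG{b}_{n+1})=0.
\end{displaymath}

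For the symmetrically separable case two extra points need attention. First, shifting both indices by two preserves parity, so each of the pairs $(m,n)$, $(m+2,n+2)$, $(m+2,n)$ and $(m,n+2)$ has odd index-sum whenever $m+n$ is odd; thus none of the four entries is annihilated by the even-sum clause of \R{eq:3.2}. Second, and this is the only genuine subtlety, I must exclude the case in which $(m,n+2)$ lands on the diagonal. Here I would use that $m+n$ odd forces $m-n$ odd, so the hypothesis $m\geq n+2$ in fact sharpens to $m\geq n+3$; consequently $m\geq(n+2)+1$ and $\DDD_{m,n+2}$ is strictly sub-diagonal rather than diagonal. With this settled, all four entries lie in the region $m\geq n+1$, so by \R{eq:3.2} each equals $-\GG{a}_i\GG{b}_j$, and
\begin{displaymath}
  \check\iota_{m,n}=(\GG{a}_m\GG{b}_n)(\GG{a}_{m+2}\GG{b}_{n+2})-(\GG{a}_{m+2}\GG{b}_n)(\GG{a}_m\GG{b}_{n+2})=0,
\end{displaymath}
the four minus signs cancelling in pairs.

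The only place where anything could go wrong is the diagonal check just described; everything else is a mechanical cancellation of a rank-one $2\times2$ determinant. I therefore expect the parity step $m-n$ odd $\Rightarrow m\geq n+3$ to be the one deduction worth stating explicitly, since overlooking it would leave open the possibility $\DDD_{m,n+2}=0$ and break the clean factorisation.
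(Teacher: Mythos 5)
Your proposal is correct and is exactly the paper's argument -- the paper's proof reads, in full, ``Follows at once from the definition of (symmetric) separability,'' and your write-up simply supplies the rank-one determinant cancellation and the index bookkeeping implicit in that sentence. The one ``subtlety'' you flag is sound but even more immediate than your route via $m-n$ odd: the entry $\DDD_{m,n+2}$ has index sum $m+n+2$, odd by hypothesis, whereas any diagonal entry has even index sum, so the diagonal case is excluded by parity alone.
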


\begin{proof}
  Follows at once from the definition of (symmetric) separability.
\end{proof}

Note that neither \R{eq:3.11} nor \R{eq:3.12} are sufficient. Thus, a skew-symmetric $\DDD$ such that $\DDD_{2m+1,n}=0$ for all $m\in\BB{Z}_+$ and $n\leq 2m-1$ satisfies \R{eq:3.11} but in general is not separable. Likewise, a tridigonal skew-symmetric matrix obeys \R{eq:3.12} but is not symmetrically separable -- this is the case with the differentiation matrix associated with the Hermite weight, for example. Trying weights at random and computing, say, $\iota_{2,0}$ leads time and again to non-separable weights.

To explore further the (non)existence of separable weights, we examine two weights, generalisations of Hermite and ultraspherical weights respectively, but endowed with an additional parameter: the {\em generalised Hermite\/} and {\em Konoplev\/} weights. 

\subsubsection{Generalised Hermite weights}

Letting $\mu>-\frac12$, we examine the weight
\begin{equation}
  \label{eq:3.13}
  w_\mu(x)=|x|^{2\mu} \ee^{-x^2},\qquad x\in\BB{R} 
\end{equation}
\cite[p.~156]{chihara78iop}, originally considered by Szeg\H{o}.\footnote{We resist calling them ``Szeg\H{o} polynmials'' since the name is reserved for another type of polynomials, orthogonal in the complex unit circle  \cite{simon05opuc1,szego75op}.} It can be easily deduced from \cite[p.~156--7]{chihara78iop} that the underlying W-functions are
\begin{Eqnarray*}
  \varphi_{2n}(x)&=&(-1)^n 2^n \sqrt{\frac{n!}{\CC{\Gamma}(n+\mu+\frac12)}} \CC{L}_n^{(\mu-\frac12)}(x^2)|x|^\mu \ee^{-x^2/2},\\
  \varphi_{2n+1}(x)&=&(-1)^n 2^{n+1}\sqrt{\frac{n!}{2\CC{\Gamma}(n+\mu+\frac32)}} x\CC{L}_n^{(\mu+\frac12)}(x^2)|x|^\mu \ee^{-x^2/2},\qquad n\in\BB{Z}_+.
\end{Eqnarray*}

Generalised Hermite weights are of marginal importance to the work of this paper and although their differentiation matrix can be derived explicitly,
\begin{Eqnarray*}
  \DDD_{2n,2n-1}&=&\sqrt{n},\qquad \DDD_{2m,2n-1}=0,\quad m\geq n+1,\\
  \DDD_{2n+1,2n}&=&\frac{2n+1}{2\sqrt{n+\mu+\frac12}},\quad \DDD_{2m+1,2n}=(-1)^{m+n-1}\mu \sqrt{\frac{m!}{(n+\mu+\frac12)_{m+1-n}}},
\end{Eqnarray*}
with skew-symmetric complement, we will not present here a formal (and lengthy) algebra. Instead, a reader might use a symbolic algebra package to compute the first few elements, enough to evaluate $\iota_{2,0}$ and $\check{\iota}_{3,0}$ and check that they are both nonzero -- in light of Lemma~9 this is sufficient to rule out separability and symmetric separability, respectively. 

As a matter of fact, $\DDD$ has an interesting shape: its $(2m+1)$st columns (hence also the $(2n+1)$st rows) are consistent with a tridiagonal matrix, more specifically with the differentiation matrix corresponding to the standard Hermite weight (i.e.\ with $\mu=0$). More specifically, 
\begin{displaymath}
  \iota_{2n,2n-1}=\left(n+\frac12\right)\sqrt{\frac{n}{n+\mu+\frac12}}\neq0,\qquad \iota_{2n+1,2n}=\left(n+\frac12\right)\sqrt{\frac{n+1}{n+\mu+\frac12}}\neq0,
\end{displaymath}
otherwise $\iota_{m,n}=0$ for $m\geq n+2$, while
\begin{Eqnarray*}
  \check{\iota}_{2n+3,2n}&=&\frac{\mu\sqrt{(n+2)!} [\mu\sqrt{(n+1)!}+n+\frac32]}{(n+\mu+\frac32)\sqrt{(n+\mu+\frac12)(n+\mu+\frac52)}} \neq 0,\\
  \check{\iota}_{2m+4,2n+1}&=&0,\qquad m\geq n+2
\end{Eqnarray*} 
In each case the separability tests \R{eq:3.11} and \R{eq:3.12} fail only marginally -- but fail nonetheless.

\subsubsection{Konoplev weights}

Letting $\alpha,\gamma>-1$, we set
\begin{equation}
  \label{eq:3.14}
  w_{\alpha,\gamma}(x)=|x|^{2\gamma+1}(1-x^2)^\alpha,\qquad x\in(-1,1).
\end{equation}
The weight \R{eq:3.14}, which has been considered in \cite{konoplev61por,konoplev65abo} and described in \cite[p.~155]{chihara78iop}, generalises ultraspherical weights by adding the possible weakly singular factor $|x|^{2\gamma+1}$. Specifically, $w_{\alpha,\gamma}\in\CC{C}^s(-1,1)$ if and only if
\begin{displaymath}
  \mbox{either}\qquad \gamma\in\left\{ \frac{k}{2}\,:\, k\in\{-1,0,\ldots,s-2\}\right\}\qquad\mbox{or}\qquad \gamma>\frac{s-1}{2}.
\end{displaymath}
The underlying orthogonal polynomial system is
\begin{displaymath}
  \CC{S}_{2n}(x)=\CC{P}_n^{(\alpha,\gamma)}(2x^2-1),\qquad \CC{S}_{2n+1}(x)=x\CC{P}_n^{(\alpha,\gamma+1)}(2x^2-1),\qquad n\in\BB{Z}_+,
\end{displaymath}
and the  monic polynomials obey the three-term recurrence relation
\begin{displaymath}
  \hat{\CC{S}}_{n+1}(x)=x\hat{\CC{S}}_n(x)-c_n \hat{\CC{S}}_{n-1}(x),
\end{displaymath}
where
\begin{displaymath}
  c_{2n}=\frac{n(n+\alpha)}{(2n+\alpha+\gamma)(2n+1+\alpha+\gamma)},\qquad c_{2n+1}=\frac{(n+1+\gamma)(n+1+\alpha+\gamma)}{(2n+\alpha+\gamma)(2n+1+\alpha+\gamma)}.
\end{displaymath}
Replacing Jacobi polynomials by their orthonormal counterparts and using a formula from \cite[p.~260]{rainville60sf}, easy algebra confirms that
\begin{Eqnarray*}
  \kappa_{2m}^{\alpha,\gamma}&=&\int_{-1}^1 w_{\alpha,\gamma}(x) S_{2m}^2(x)\D x=\frac{(m+1+\alpha+\gamma)_m \G(m+1+\alpha)\G(m+1+\gamma)}{m!\G(2m+2+\alpha+\gamma)},\\
  \kappa_{2m+1}^{\alpha,\gamma}&=&\int_{-1}^1 w_{\alpha,\gamma}(x) S_{2m+1}^2(x)\D x=\frac{(m+2+\alpha+\gamma)_m\G(m+1+\alpha)\G(m+2+\gamma)}{m!\G(2m+3+\alpha+\gamma)},
\end{Eqnarray*}
therefore
\begin{Eqnarray*}
  \varphi_{2m}(x)&=& \frac{|x|^{\gamma+\frac12}(1-x^2)^{\alpha/2}}{\sqrt{\kappa_{2m}^{\alpha,\gamma}}} \CC{P}_m^{(\alpha,\gamma)}(2x^2-1),\\
  \varphi_{2m+1}(x)&=&\frac{x|x|^{\gamma+\frac12}(1-x^2)^{\alpha/2}}{\sqrt{\kappa_{2m+1}^{\alpha,\gamma}}}\CC{P}_m^{(\alpha,\gamma+1)}(2x^2-1).
\end{Eqnarray*}
The weights \R{eq:3.14} are symmetric, thus we examine the possibility of symmetric separability. A brute-force computation yields
\begin{displaymath}
  \check{\iota}_{3,0}=(5+\alpha+\gamma)(2\gamma+1) \sqrt{\frac{(4+\alpha+\gamma)(6+\alpha+\gamma)}{2(1+\alpha)(2+\alpha)(1+\gamma)(3+\gamma)}},
\end{displaymath}
ruling out symmetric separability except for the case $\gamma=-\frac12$, which corresponds to the ultraspherical weight. 

\subsubsection{A limiting behaviour of the $\iota_{m,n}$s}

While separability, hence $\iota_{m,n}=0$ for $m\geq n+2$, appears to be exceedingly rare, we claim that the latter holds more broadly in a much weaker, asymptotic form.

Let $w$ be a weight in $(a,b)$, $w(a)=w(b)=0$, with the underlying orthonormal polynomials $\{p_n\}_{n=0}^\infty$, where the coefficient of $x^n$ in $p_n$ is $k_n>0$. Comparing the coefficients of $x^{n+1}$ in the three-term recurrence relation \R{eq:1.3} we deduce at once that $k_{n+1}/k_n=\beta_n^{-1}$.

\begin{theorem}
  Supposing that $\beta_n\geq \beta^*>0$ and $\tilde{w}(x)=[w'(x)]^2/w(x)$ is itself a weight function in $(a,b)$, it is true that
  \begin{equation}
    \label{eq:3.15}
    \lim_{m\rightarrow\infty}\iota_{m,n}=0,\qquad n\in\BB{Z}_+.
  \end{equation}
\end{theorem}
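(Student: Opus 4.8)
The plan is to reduce the whole statement to the single fact that, for each fixed $n$, the entries $\DDD_{m,n}$ tend to zero as $m\to\infty$; once that is in hand, \R{eq:3.15} is immediate, since every entry occurring in $\iota_{m,n}=\DDD_{m,n}\DDD_{m+1,n+1}-\DDD_{m+1,n}\DDD_{m,n+1}$ lies in column $n$ or column $n+1$ and therefore vanishes in the limit, whence both products do. So first I would fix $n$ and take $m>n+1$, so that all four relevant entries are in the strictly lower-triangular regime governed by \R{eq:2.3}. Writing $\DDD_{m,n}=-\tfrac12\int_a^b w'p_mp_n\,\D x=-\tfrac12\int_a^b \big(\tfrac{w'}{w}p_n\big)p_m\,w\,\D x=-\tfrac12\langle\phi_n,p_m\rangle_w$, where $\phi_n:=\tfrac{w'}{w}p_n$ and $\langle\,\cdot\,,\cdot\,\rangle_w$ is the inner product of $\CC{L}_2((a,b),w\D x)$, exhibits $\DDD_{m,n}$ (up to the factor $-\tfrac12$) as the $m$th Fourier coefficient of $\phi_n$ in the orthonormal basis $\PPP$.

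The crucial step is to check that $\phi_n\in\CC{L}_2((a,b),w\D x)$. Indeed $\|\phi_n\|_w^2=\int_a^b (w'/w)^2p_n^2\,w\,\D x=\int_a^b \tilde w\,p_n^2\,\D x$, and this is finite precisely because $\tilde w$ is a weight function (all its moments exist) while $p_n^2$ is a polynomial. Bessel's inequality then yields $\sum_{m=0}^\infty|\langle\phi_n,p_m\rangle_w|^2\le\|\phi_n\|_w^2<\infty$, so the coefficients tend to zero and $\DDD_{m,n}\to0$ as $m\to\infty$ for every fixed $n$; no completeness of $\PPP$ is needed. Equivalently, the same square-summability of column $n$ can be read off the identity $(\DDD^{\,2})_{n,n}=-\tfrac14\int_a^b\tilde w\,p_n^2\,\D x$ established in the proof of Theorem~5, which by skew symmetry equals $-\sum_\ell\DDD_{\ell,n}^2$. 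Either way, $\DDD_{m,n}$, $\DDD_{m+1,n}$, $\DDD_{m,n+1}$ and $\DDD_{m+1,n+1}$ all tend to zero, and \R{eq:3.15} follows.

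This route uses only that $\tilde w$ is a weight and makes no appeal to $\beta_n\ge\beta^*$; I would keep that hypothesis in reserve for the sharper, constructive statement that $\DDD_{m,n}$ is \emph{asymptotically} of product form $\GG a_m\GG b_n$, so that $\iota_{m,n}$ is genuinely a small $2\times2$ determinant rather than a difference of two separately vanishing products. To pursue that I would expand $p_m'=\sum_{k<m}2\DDD_{m,k}p_k$ and feed the three-term recurrence \R{eq:1.3} into the identity $\int_a^b w\,(xp_m')p_n\,\D x=\int_a^b w\,p_m'(xp_n)\,\D x$, producing a two-index recurrence for $J_{m,n}=\langle p_m',p_n\rangle_w$; here $k_{n+1}/k_n=\beta_n^{-1}\le(\beta^*)^{-1}$ controls the growth of the leading coefficients and keeps the tail estimates uniform in $n$. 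I expect this constructive asymptotic analysis of the deep coefficients $J_{m,n}$ to be the main obstacle, since it demands honest control of the low-order Fourier coefficients of $p_m'$; for the bare limit \R{eq:3.15}, by contrast, the square-summability argument above is both shorter and self-contained.
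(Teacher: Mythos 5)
Your proof is correct, and it takes a genuinely different --- and in one respect stronger --- route than the paper's. The paper also starts from \R{eq:2.3}, but it assembles $\iota_{m,n}$ into a double integral, applies the Christoffel--Darboux formula to the pair $p_n,p_{n+1}$ --- this is precisely where $k_{n+1}/k_n=\beta_n^{-1}\leq 1/\beta^*$, i.e.\ the hypothesis $\beta_n\geq\beta^*$, enters --- and then passes to the limit so that the partial sum $\sum_{\ell=0}^n p_\ell(x)p_\ell(y)$ becomes the reproducing kernel $K(x,y)=\delta_{x-y}$ of \R{eq:2.8} from Theorem~5, whereupon the factor $(x-y)$ annihilates the double integral; note that the limit actually taken in the paper's proof is $n\rightarrow\infty$, so your reading of the statement, with $m\rightarrow\infty$ and $n$ fixed, is one the recorded proof does not literally address. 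Your route --- reading the $n$th column of $\DDD$ as the Fourier coefficients $\DDD_{m,n}=-\frac12\langle\phi_n,p_m\rangle_w$ of $\phi_n=(w'/w)p_n$, checking $\|\phi_n\|_w^2=\int_a^b\tilde{w}\,p_n^2\,\D x<\infty$ directly from the moment hypothesis on $\tilde{w}$, and invoking Bessel --- is more elementary: it needs neither completeness of $\PPP$ nor the delta identity \R{eq:2.8} (whose delicacy the paper itself flags in a footnote), it delivers square-summability of each column rather than a bare limit, and it shows that $\beta_n\geq\beta^*$ is not needed at all for \R{eq:3.15} as stated. What the paper's heavier machinery buys is the representation \R{eq:3.16} of $\iota_{m,n}$ through the Christoffel--Darboux kernel, the natural vehicle for the $n\rightarrow\infty$ regime and for linking the asymptotics to the separability structure; your closing sketch of an asymptotic product form via a recurrence for $\langle p_m',p_n\rangle_w$ is a plausible (though unverified) way to recover that finer information, but, as you say, it is not required for the theorem, and your two-paragraph Bessel argument stands on its own.
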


\begin{proof}
  Letting $m\geq n+2$, \R{eq:2.3} yields
  \begin{Eqnarray*}
  \iota_{m,n}&=&\frac14 \int_a^b w'(y) p_m(y)p_n(y)\D y \int_a^b w'(x) p_{m+1}(x) p_{n+1}(x)\D x\\
  &&\hspace*{20pt}\mbox{}-\frac14 \int_a^b w'(x) p_{m+1}(x)p_n(x)\D x\int_a^b w'(y) p_m(y) p_{n+1}(y)\D y\\
  &=&\frac14 \int_a^b \int_a^b w'(x)w'(y) p_{m+1}(x)p_m(y)[ p_{n+1}(x)p_n(y)-p_n(x)p_{n+1}(y)]\D x\D x.
\end{Eqnarray*}
We recall the {\em Christoffel--Darboux formula\/},
\begin{displaymath}
  \sum_{\ell=0}^n p_\ell(x)p_\ell(y)=\frac{k_n}{k_{n+1}} \frac{p_{n+1}(x)p_n(y)-p_n(x)p_{n+1}(y)}{x-y},
\end{displaymath}
where $k_n>0$ is the coefficient of $x^n$ in $p_n$ \cite[p.~153]{chihara78iop}. Therefore 
\begin{Eqnarray}
  \nonumber
  \iota_{m,n}&=&\frac{k_{n+1}}{k_n}\int_a^b \int_a^b w'(x)w'(y) p_{m+1}(x) p_m(y) (x-y) \sum_{\ell=0}^n p_\ell(x)p_\ell(y) \D x\D y\\
  \label{eq:3.16}
  &\leq&\frac{1}{\beta^*} \left|\int_a^b \int_a^b w'(x)w'(y) p_{m+1}(x) p_m(y) (x-y) \sum_{\ell=0}^n p_\ell(x)p_\ell(y) \D x\D y\,\right|\!,
\end{Eqnarray}
because $\beta_n^{-1}\leq {\beta^*}^{-1}$. Letting $n\rightarrow\infty$ in \R{eq:3.16}, we obtain
\begin{Eqnarray*}
  \lim_{n\rightarrow\infty}\iota_{m,n}&\leq& \frac{1}{\beta^*}\left|\int_a^b \int_a^b w'(x)w'(y)p_{m+1}(x)p_m(x) (x-y)  \sum_{\ell=0}^\infty p_\ell(x)p_\ell(y) \D x\D y\right|\\
  &=&\frac{1}{\beta^*}\left|\int_a^b \int_a^b \frac{w'(x)w'(y)}{\sqrt{w'x)w(y)}} p_{m+1}(x)p_m(x) (x-y) K(x,y)\D x\D y\right|,
\end{Eqnarray*}
where $K$ is tha Christoffel--Darbeaux kernel from the proof of Theorem~5. According to \R{eq:2.8} it is a reproducing kernel and it follows at once that the double integral vanishes.
\end{proof}

The condition $\beta_n>\beta^*$, $n\in\BB{Z}_+$, is very weak: we already know that $\beta_n>0$, all the condition says is that, in addition, the $\beta_n$s are bounded away from zero.

\setcounter{equation}{0}
\setcounter{figure}{0}
\section{Computational aspects}

\subsection{A product of $\DDD$ by a vector}

Consider a separable weight function, e.g.\ a Laguerre weight. A major task in practical implementation of the ideas of this paper to spectral methods is to form a product of the form $\MM{h}=\DDD\,\MM{f}$, where $\MM{f}$ is a (real or complex) infinite-dimensional vector. In most applications $\MM{f}$ is likely to be the vector of expansion coefficients of a function $f$ in the basis $\Phi$, which is likely to decay rapidly. For example, if $f$ is analytic in an ellipse enclosing $(a,b)$, we expect the $|f_k|$s to decay at an exponential rate. 

Forming $\MM{h}$ and, with greater generality, products of the form $\DDD^{\,r}\MM{f}$ for $r\in\BB{N}$, is important for the obvious reason that approximating partial differential equations is likely to entail forming derivatives. Perhaps less trivial reason is the formation of matrix functions. Thus, let $g$ be a (typically analytic) function and we wish to form $g(\DDD)\MM{f}$: standard examples are the exponential $g(z)=\ee^z$, the cosine $g(z)=\cos z$ and the sinc function $g(z)=z^{-1}\sin z$. Two of the most popular methods are the Krylov subspace algorithm and a quadrature of an integral representation of $g$ \cite{higham08fom}, and both require the formation of products of the kind considered in this subsection. 

Tridiagonal differentiation matrices, of the form considered in \cite{iserles19oss}, seemingly enjoy strong advantage in this context, because the formation of $\DDD_M \MM{f}_{\!M}$, where $\MM{f}\in\BB{R}^{M+1}$, entails just $\approx 3M$ floating-point operations (flops).\footnote{Products of the form $\mbox{\smallcurly{D}}_M^{\,r}\Mm{f}_{\!M}$ can thus be formed  consecutively in $\approx 3rM$ flops.} We demonstrate that this `fast product' property is shared by differentiation matrices formed by separable or symmetrically separable weights. Actually, the structure of these matrices allows for infinite-dimensional computation: the starting point is an integer $N$, typically much larger than $M$, such that $|f_m|$ is negligible (in practical terms, smaller than a user-provided error tolerance) for $m>N$, and we wish to form
\begin{equation}
  \label{eq:4.1}
  h_m=\sum_{n=0}^N \DDD_{m,n} f_n,\qquad m=0,\ldots,M.
\end{equation}
We commence by assuming that a weight is separable, whereby \R{eq:3.1} yields
\begin{displaymath}
  h_m=- \GG{a}_m \sum_{n=0}^{m-1} \GG{b}_n f_n + \GG{b}_m\!\! \sum_{n=m+1}^N \GG{a}_n f_n=\sigma_m+\rho_m,\qquad m=0,\ldots,M,
\end{displaymath}
where
\begin{displaymath}
  \sigma_m=\sum_{n=0}^{m-1} \GG{b}_n f_n,\quad \rho_m=\sum_{n=m+1}^N \GG{a}_n f_n,\qquad m=0,\ldots,M.
\end{displaymath}
Then
\begin{Eqnarray*}
  h_0&=&\GG{b}_0 \rho_0,\\
  h_m&=&-\GG{a}_m\sigma_m+\GG{b}_m\rho_m,\qquad m=1,\ldots,M,\\
  &&\mbox{where}\quad \sigma_m=\sigma_{m-1}+\GG{b}_{m-1}f_{m-1},\quad \rho_m=\rho_{m-1}-\GG{a}_m f_m.
\end{Eqnarray*}
Assuming that the $\GG{a}_m$s and $\GG{b}_n$s have been precomputed (and this need be done only once, no matter how many products are required), the calculation \R{eq:4.1}  takes just $\approx N+4M$ flops -- and by the same token, computing the first $M+1$ entries of $\DDD_N^{\,r}\!\MM{f}_{\!N}$ takes $\approx r(N+4M)$ flops. 

Similar operations count applies to symmetrically separable weight, whereby the entires of $\DDD$ obey \R{eq:3.2}. Assuming that both $M$ and $N$ are even, we have 
\begin{Eqnarray*}
  h_{2m}&=&\sum_{n=0}^{N/2} \DDD_{2m,2n+1} f_{2n+1},\\
  h_{2m+1}&=&\sum_{n=0}^{N/2} \DDD_{2m+1,2n} f_{2n},\qquad m=0,\ldots,\frac{M}{2}.
\end{Eqnarray*}
Therefore
\begin{Eqnarray*}
  h_{2m}&=&\GG{a}_{2m}\sum_{n=0}^{m-1} \GG{b}_{2n+1} f_{2n+1}-\GG{b}_{2m}\sum_{n=m+1}^{N/2} \GG{a}_{2n+1} f_{2n+1},\\
  h_{2m+1}&=&\GG{a}_{2m+1} \sum_{n=0}^{m-1} \GG{b}_{2n} f_{2n}-\GG{b}_{2m+1}\sum_{n=m+1}^{N/2} \GG{a}_{2n} f_{2n},\qquad m=0,\ldots,\frac{M}{2}.
\end{Eqnarray*}
Set
\begin{Eqnarray*}
  \sigma_m^{\CC{E}}&\displaystyle =\sum_{n=0}^{m-1} \GG{b}_{2n} f_{2n},\qquad &\sigma_m^{\CC{O}}=\sum_{n=0}^{m-1} \GG{b}_{2n+1}f_{2n+1},\\
  \rho_m^{\CC{E}}&\displaystyle =\sum_{n=m+1}^{N/2} \GG{a}_{2n} f_{2n},\qquad &\rho_m^{\CC{O}}=\sum_{n=m+1}^{N/2} \GG{a}_{2n+1} f_{2n+1},
\end{Eqnarray*}
hence
\begin{displaymath}
  h_{2m}=\GG{a}_{2m}\sigma_m^{\CC{O}}-\GG{b}_{2m} \rho_m^{\CC{O}},\qquad h_{2m+1}=\GG{a}_{2m+1} \sigma_m^{\CC{E}}-\GG{b}_{2m+1} \rho_m ^{\CC{E}},\qquad m=0,\ldots,\frac{M}{2}.
\end{displaymath}
However,
\begin{displaymath}
  \sigma_0^{\CC{E}}=\sigma_0^{\CC{O}}=0,\qquad \rho_0^{\CC{E}} =\sum_{m=1}^{N/2} \GG{a}_{2n}f_{2n},\qquad \rho_0^{\CC{O}}=\sum_{m=1}^{N/2} \GG{a}_{2n+1} f_{2n+1},
\end{displaymath}
and\\[4pt]
\begin{tabular}{lcl}
  $\sigma_m^{\CC{E}}=\sigma_{m-1}^{\CC{E}}+\GG{b}_{2m-2}f_{2m-2},$ &\qquad & $\sigma_m^{\CC{O}}=\sigma_{m-1}^{\CC{O}}+\GG{b}_{2m-1} f_{2m-1}$,\\[4pt]
  $\rho_m^{\CC{E}}=\rho_{m-1}^{\CC{E}}-\GG{a}_{2m}f_{2m},$&\qquad & $\displaystyle \rho_m^{\CC{O}}=\rho_{m-1}^{\CC{O}}-\GG{a}_{2m+1}f_{2m+1},\qquad m=1,\ldots,\frac{M}{2}.$
\end{tabular}\\[4pt]
Thus, again, we need just $\approx 4M+N$ flops to compute the first $M+1$ terms of $\DDD_N\MM{f}_{\!N}$. 

\subsection{Speed of convergence}

While the convergence of orthogonal polynomials to `nice' (in particular, analytic) functions is well understood, this is not the case for W-functions.  Comprehensive analysis of their convergence and its speed is a matter for future work, yet it is of an interest to present preliminary computational results, not least as a preamble for a discussion on the choice of the optimal value of $\alpha$ in the context of either Laguerre or ultraspherical weights, while comparing them to standard approximation by the underlying orthogonal polynomials.

It rapidly becomes apparent that we have a competition between different imperatives:
\begin{itemize}
\item {\em The number of zero boundary conditions:\/} This determines the value of $\alpha$ and, according to Theorems~7 and 8, we need $\alpha>s-1$ in $\HHH{s}(a,b)$.
\item {\em Regularity of approximating functions:\/} While $\PPP$  consists of polynomials, hence analytic functions, this is not the case with $\Phi$, whether in the context of ultraspherical or Laguerre weights: it all depends on the value of $\alpha$. If $\alpha$ is an even integer then the $\varphi_n$s are analytic, otherwise analyticity fails at the endpoints.
\item {\em The underlying function space:\/} Much depends on how the error is measured. Among the many possibilities, we single out two: the $\HHH{p}(a,b)$ norm for a suitable value of $p$ (in particular, the $\CC{L}_2(a,b)$ norm) and the $\CC{L}_\infty[a,b]$ (and, more generally, $\CC{H}_\infty^p[a,b]$) norm. The choice of a norm depends on the underlying application.
\end{itemize}
As things stand, we cannot report any `hard' results. Yet, even preliminary numerical experimentation reveals a remarkable state of affairs.

In the following figures we let $\alpha$ be in $\{1,2,3,4\}$. In this and all  figures in this paper we denote $\alpha=1$ by a red, dotted line, $\alpha=2$ by a magenta solid line, $\alpha=3$ by a green dashed line and, finally, $\alpha=4$ by a blue dash-dotted line. Because of the rapid decay of errors, we display them all in a logarithmic scale to base 10 -- in other words, the $y$-axis displays the number of decimal digits. Given a function $f$ and recalling the expansion coefficients $\hat{f}_n^P$ and $\hat{f}^\Phi_n$ from Remark~1, corresponding to  expansions in $\PPP$ and $\Phi$ respectively, we let
\begin{displaymath}
  F^P_N(x)=\sum_{n=0}^N \hat{f}_n^P p_n(x),\quad F^\Phi_N(x)=\sum_{n=0}^N \hat{f}_n^\Phi \varphi_n(x),\qquad N\in\BB{Z}_+.
\end{displaymath}
Thus, $F^P_N-f$ and $F^\Phi_N-f$ are the (pointwise) errors with respect to the polynomial and the W-function basis, respectively, and we need to measure them in an appropriate norm. We denote by ${}^d\!F_N^P$  the derivative expansion, i.e.\ with $p_n$ and $f$ replaced by $p_n'$ and $f'$ respectively,  similarly for higher derivatives and for $F^\Phi_N$.

\subsubsection{Ultraspherical W-functions}

We commence from ultraspherical weights and consider
\begin{equation}
  \label{eq:4.2}
  f(x)=(1-2x)\cos\frac{\pi x}{2}\in\HHH{1}(-1,1).
\end{equation}

\begin{figure}[htb]
  \begin{center}
    \includegraphics[width=180pt]{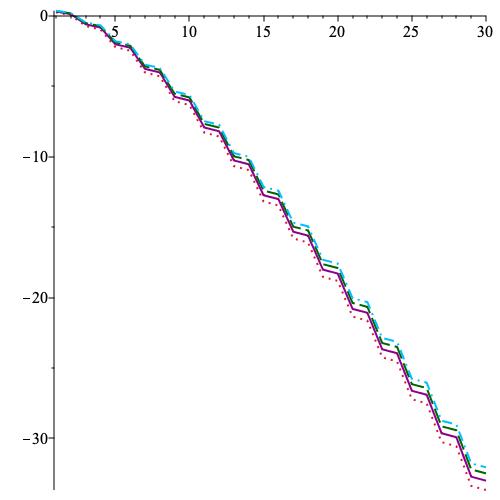}\includegraphics[width=180pt]{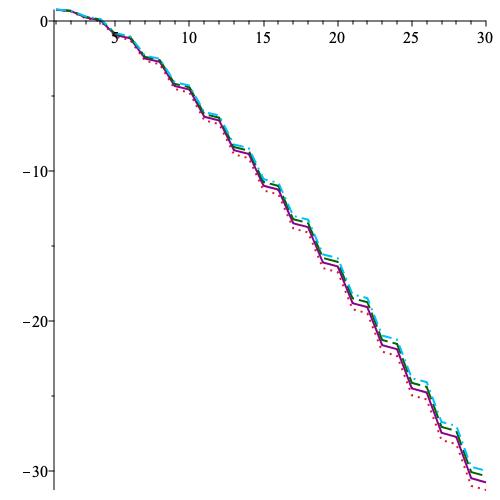}
    \includegraphics[width=180pt]{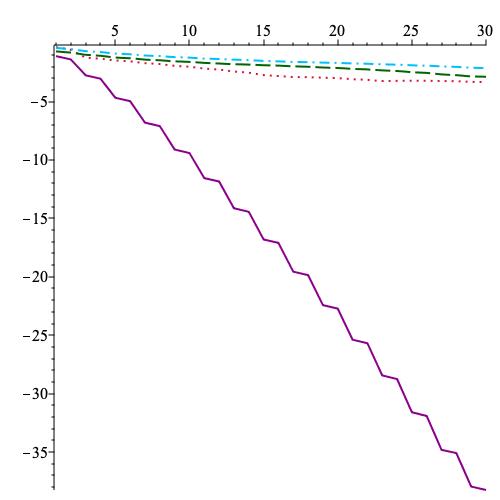}\includegraphics[width=180pt]{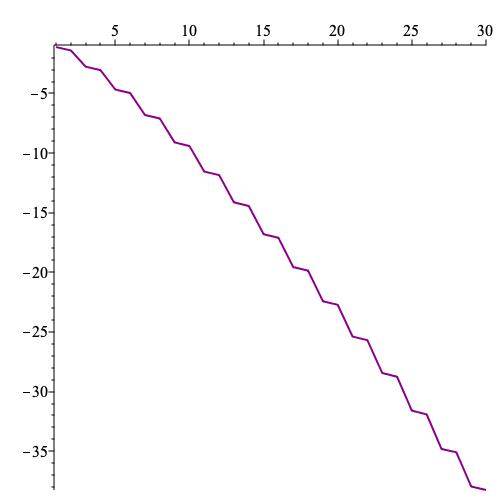}
    \caption{Ultraspherical W-functions: The errors $\log_{10}\|F_N^P-f\|_\infty$ (top left), $\log_{10}\|{}^d\!F_N^P-f'\|_\infty$ (top right), $\log_{10}\|F_N^\Phi-f\|_\infty$ (bottom left) and $\log_{10}\|{}^d\!F_N^\Phi-f'\|_\infty$ (bottom right) for $\alpha=1,2,3,4$ and $N=1,2,\ldots,30$ (except that in the bottom-right plot only $\alpha=2$ is displayed).}
    \label{fig:4.1}
  \end{center}
\end{figure}

In Fig.~\ref{fig:4.1} we display in logarithmic norm (in other words, the $y$-axis displays the number of significant digits of the error) the $\CC{L}_\infty[-1,1]$ error for polynomial approximation to $f$ and its first derivative (top row) and for W-functions for the ultraspherical weight.\footnote{Polynomial approximation, of course, leads to an unstable spectral method. Yet, its error and its comparison with the error committed by W-functions are of an independent interest.} Polynomial approximation -- as can be expected from general theory and the analyticity of $f$ -- decays at an exponential speed and, for $N=30$, we attain $\approx 32$ significant digits. This is also the case with derivatives, with a very minor degradation in accuracy. The error for W-functions, though, is radically different. The errors for $\alpha\in\{1,3,4\}$ decay very slowly, at a polynomial rate, and for $N=30$ we recover just $\approx 4$ significant digits, an unacceptably large error. On the other hand, the error for $\alpha=2$ at $N=30$ is $\approx 3\times 10^{-39}$, significantly better than polynomial approximation! 

The reason for this miraculous behaviour for $\alpha=2$ bears some attention. Little surprise perhaps that $\alpha=1$ behaves poorly because it is at the wrong end of the boundedness condition for $\DDD^{\,2}$. However, as a matter of fact, we do not consider second derivatives in this particular instance and $\alpha\in\{3,4\}$ are just as bad.   The reasons are as follows. For $\alpha\in\{1,3\}$ the $\varphi_n$s have a weak singularity along the boundary, while $\varphi_n'$ becomes singular there. For $\alpha=4$, on the other hand, $\varphi_n'(\pm1)=0$ mean that $\CC{L}_\infty$ convergence of derivatives is impossible unless also the derivatives of $f$ vanish at the endpoints. (This is the reason why $\log_{10}\|{}^d\!F_N^\Phi-f'\|_\infty$ is  displayed only for $\alpha=2$.)

\begin{figure}[htb]
  \begin{center}
    \includegraphics[width=180pt]{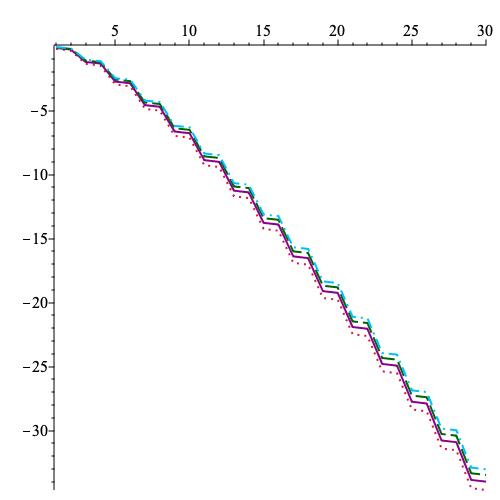}\includegraphics[width=180pt]{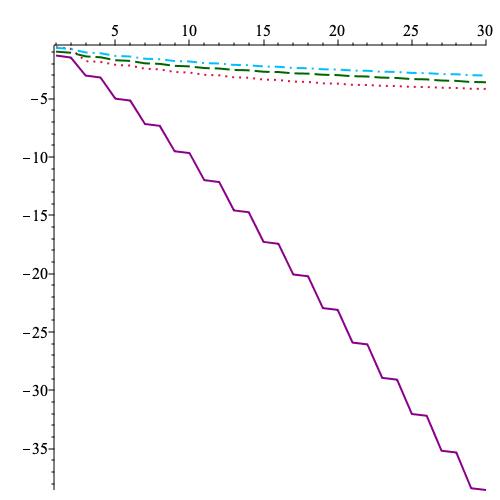}
    \caption{Ultraspherical W-functions: The errors $\log_{10}\|F_N^P-f\|_2$ (left) and $\log_{10}\|F_N^\Phi-f\|_2$ (right) for $\alpha=1,2,3,4$.}
    \label{fig:4.2}
  \end{center}
\end{figure}

Not much changes if, instead of $\CC{L}_\infty$, we compute an $\HHH{p}$ error, except that in general $\CC{L}_2$-like norms are more forgiving. In principle, neither singularities or excessive vanishing of derivatives at the endpoints need prevent convergence. Thus, in Fig.~\ref{fig:4.2} we plot the $\CC{L}_2(-1,1)$ errors for example~\ref{eq:4.2}. The overall picture remains the same: polynomial approximation decays at exponential rate and we attain, regardless of the choice of $\alpha$, about 34 significant digits for $N=30$, while W-function approximation for $\alpha\in\{1,3,4\}$ is very poor yet, for $\alpha=2$, we again hit the `sweet spot' and recover $\approx38$ significant digits. W-functions are vastly superior for $\alpha=2$, fail dismally otherwise. 

To explore further the error committed by ultraspherical W-functions we consider
\begin{equation}
  \label{eq:4.3}
  f(x)=(1-2x)\cos^2\frac{\pi x}{2}
\end{equation}
the only difference in this (not very imaginative!) choice is that now $f(\pm1)=f'(\pm1)=0$. We display the $\CC{L}_\infty$ error for $f^{(i)}$, $i=0,1,2$, in Fig.~\ref{fig:4.3} for the W-functions. The error in polynomial approximation is roughly independent of $\alpha$ and for $N=30$ we attain $\approx 24$ decimal digits for $f$, $\approx21$ for $f'$ and $\approx19$ for $f''$. By this stage we should not be surprised that $\alpha=1$ and $\alpha=3$ do badly in approximating $f$ because of the weak singularity at the endpoints and they fail altogether approximating derivatives. For $\alpha\in\{2,4\}$ the endpoints are analytic and indeed the underlying functions do very well indeed, definitely better than polynomial approximation. $\alpha=4$ is a winner, unsurprisingly because $f\in\HHH{2}(-1,1)$ and this is matched by $\Phi$. However, $\alpha=2$ does quite well, worse by perhaps two decimal digits but  still beating polynomial approximation. The reason is that too few zero Dirichlet boundary conditions do not prevent $\CC{L}_\infty$ convergence of an orthogonal sequence, although they might slow it up to a modest extent. On the other hand, excessive zero Dirichlet boundary conditions prevent $\CC{L}_\infty$ convergence at the endpoints. Thus, {\em the interplay between the number of zero boundary conditions and the choice of $\alpha$ is not symmetric!\/} It is always better to err by choosing smaller $\alpha$, as long as it is an even integer, consistent with the bound of Theorem~8.

\begin{figure}[htb]
  \begin{center}
    \includegraphics[width=125pt]{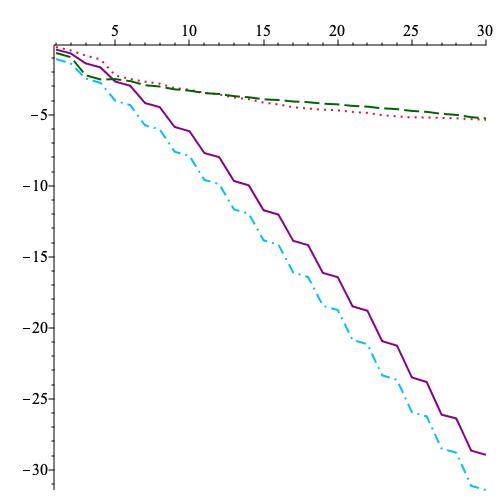}\includegraphics[width=125pt]{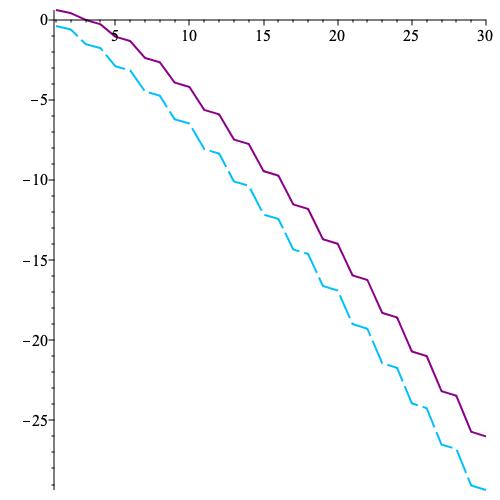}\includegraphics[width=125pt]{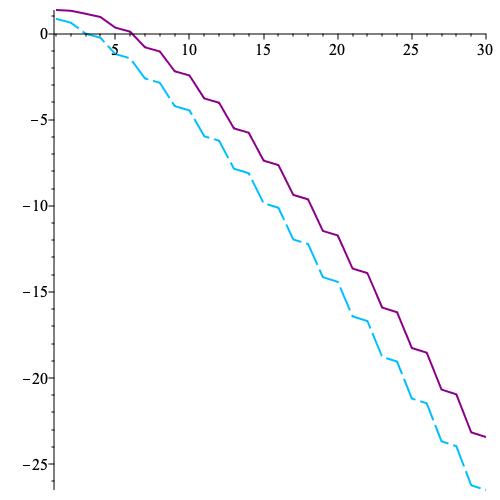}
    \caption{Ultraspherical W-functions: The errors $\log_{10}\|F_N^\Phi-f\|_2$ (left) $\log_{10}\|{}^d\!F_N^\Phi-f'\|_2$ (centre) and $\log_{10}\|{}^{dd}\!F_N^\Phi-f''\|_2$ (right) for $\alpha=1,2,3,4$ and the function \R{eq:4.3}.}
    \label{fig:4.3}
  \end{center}
\end{figure}

\subsubsection{Laguerre W-functions}

We are now concerned with the Laguerre weight and choose the model problem
\begin{equation}
  \label{eq:4.4}
  f(x)=\ee^{-x}\sin x,\qquad x\geq0.
\end{equation}
Note that $f(0)=0$, $f'(0)\neq0$. 

\begin{figure}[htb]
  \begin{center}
    \includegraphics[width=180pt]{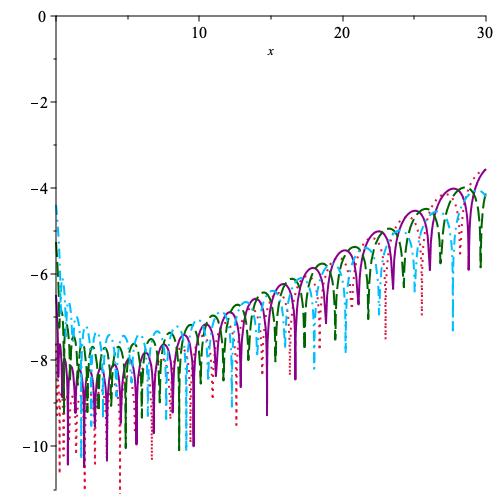}\includegraphics[width=180pt]{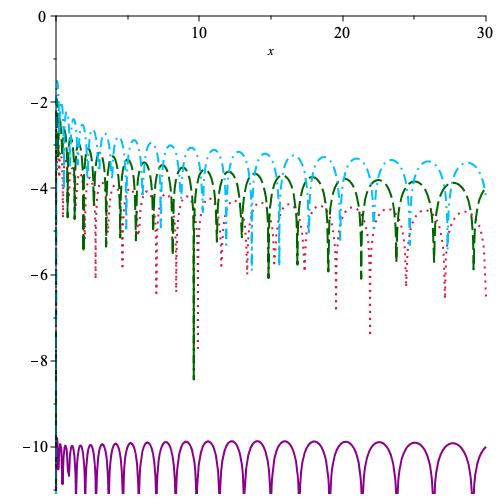}
    \caption{Laguerre W-functions: The errors $\log_{10}|F_{40}^P(x)-f(x)|$ (left) and $\log_{10}|F_{40}^\Phi(x)-f(x)|$ (right) for $x\in[0,30]$ and $\alpha=1,2,3,4$.}
    \label{fig:4.4}
  \end{center}
\end{figure}

An expansion in Laguerre (or any other) polynomials cannot be bounded in an infinite interval hence, instead of plotting $\log_{10}\|F_N^P-f\|_2$ for increasing values of $N$, we choose $N=40$ and plot the pointwise error in the interval $[0,30]$. This is evident on the left of Fig.~\ref{fig:4.4}: the error is just about fine for small $x>0$, subsequently growing rapidly (as a matter of fact, exponentially). On the other hand, as can be seen on the right of that figure, the error of W-functions is uniformly bounded. For $\alpha\in\{1,3,4\}$ it is fairly similar -- and unacceptably large -- while for $\alpha=2$ we attain $\approx10$ decimal digits of accuracy, apparently uniformly in $[0,\infty)$. Yet again we have the `sweet spot' for $\alpha=2$. This state of affairs remains true for the first few derivatives and the deterioration in accuracy using W-functions is very mild indeed.

Finally, we consider
\begin{equation}
  \label{eq:4.5}
  f(x)=\ee^{-x}\sin^2x,\qquad x\geq0.
\end{equation}
Now $f(0)=f'(0)=0$ and $f''(0)\neq0$. There is no need to display the $\CC{L}_\infty[0,\infty)$ error committed by Laguerre polynomials since, again, it is unbounded. 

\begin{figure}[htb]
  \begin{center}
    \includegraphics[width=180pt]{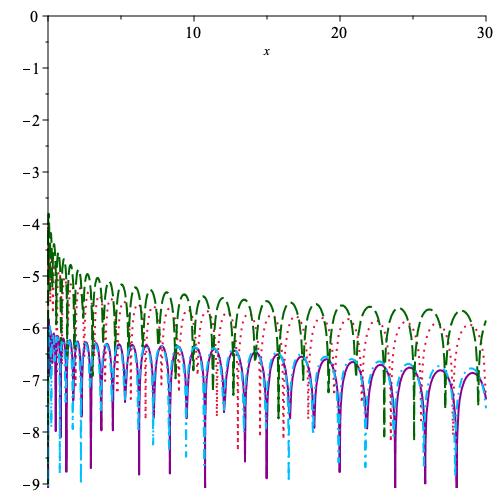}\includegraphics[width=180pt]{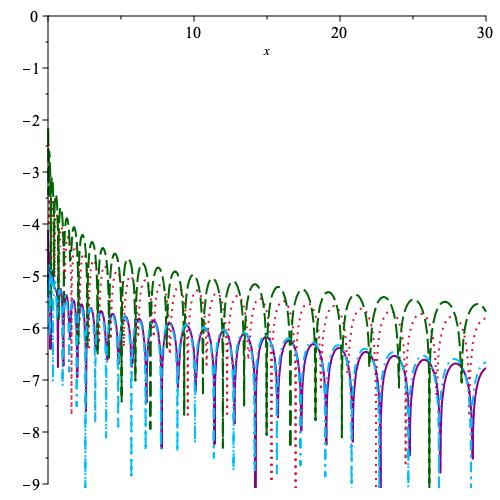}
    
    \includegraphics[width=180pt]{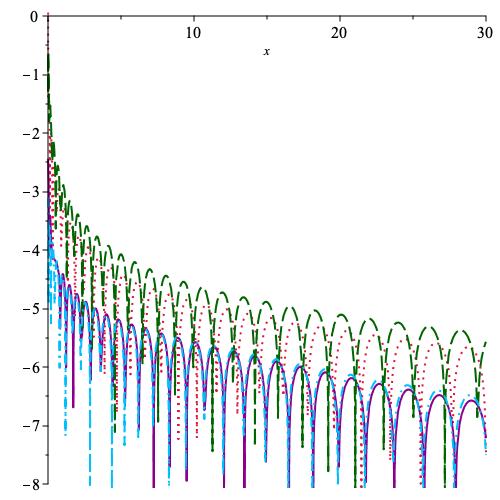}
    \caption{Laguerre W-functions: The errors $\log_{10}|F_{60}^\Phi(x)-f(x)|$ (left) $\log_{10}|{}^d\!F_{60}^\Phi(x)-f'(x)|$ (right) and $\log_{10}|{}^{dd}\!F_{60}^\Phi(x)-f''(x)|$ for the function \R{eq:4.5}, $x\in[0,30]$ and $\alpha=1,2,3,4$.}
    \label{fig:4.5}
  \end{center}
\end{figure}

In Fig.~\ref{fig:4.5} we employ the same colour and style scheme to plot the errors committed in $[0,30]$ for $f$, $f'$ and $f''$. Clearly, $\alpha=2$ and $\alpha=4$, the two values associated with analyticity at the origin, win insofar as approximating the function itself is concerned, although the margin is somewhat smaller than in our other examples. The approximation of the first and the second derivatives is more interesting: on the face of it, it is a dead heat between $\alpha=2$ and $\alpha=4$, but closer examination of the behaviour near the left endpoint unravels a crucial difference. For example. for $\eta=10^{-10}$ we have (to four significant digits)\\[6pt]
\begin{tabular}{c|cccc}
  $\alpha$ & 1 & 2 & 3 & 4\\\hline
  $|F_{60}^\Phi(\eta)-f(\eta)|$ & $2.128_{-08}$ & $9.631_{-15}$ & $1.434_{-16}$ & $7.778_{-24}$\\
  $|{}^d\!F_{60}^\Phi(\eta)-f'(\eta)|$ & $1.064_{+02}$ & $9.631_{-05}$ & $2.151_{-06}$ & $9.555_{-14}$\\
  $|{}^{dd}\!F^\Phi_{60}(\eta)-f''(\eta)|$ & $5.319_{+11}$ & $4.092_{-03}$ & $1.075_{+05}$ & $9.555_{-04}$
\end{tabular}\\[6pt]
The conclusion is clear. Once the inequality of Theorem~7 is breached, the approximation blows up at the origin: this happens with $\alpha=1$ and {\em any\/} derivative. The error for $\alpha=3$ decays for $N\gg1$ for the function value and the first derivative, but it blows up for the second derivative, while for $\alpha=2$ the progression to the correct boundary condition is considerably slower than for $\alpha=4$. This is apparent from Fig.~\ref{fig:4.6}: $\alpha=4$ wins, although by a small margin.

\begin{figure}[htb]
  \begin{center}
    \includegraphics[width=180pt]{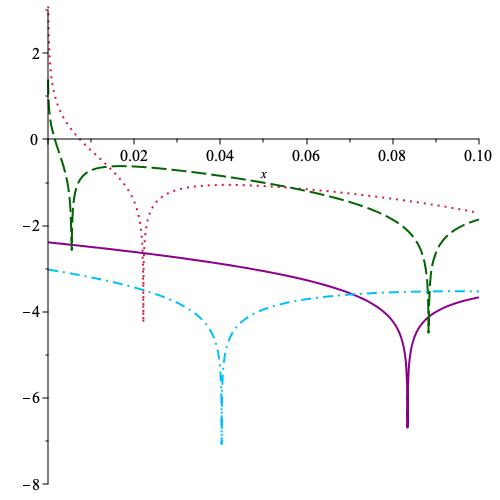}
    \caption{Laguerre W-functions: A close-up of the bottom plot in Fig.~\ref{fig:4.5} near the left endpoint.}
    \label{fig:4.6}
  \end{center}
\end{figure}

\subsection{Computational and theoretical challenges}

This is the first paper to consider W-functions in an organised way, although of course Hermite functions have been used and investigated extensively and W-functions associated with Freud weights (and which are special because of Theorem~2) have been introduced in \cite{luong23apw}. Needless to say, this work neither resolves all the mathematical and computational issues associated with W-functions nor claims to do so. While there are important theoretical questions, e.g.\ to characterise all separable or symmetrically separable weight functions, perhaps the most urgent issues are related to the applications of W-functions to spectral methods. This concerns issues in approximation theory (speed of convergence in different function classes), as well as purely computational questions. The speed of approximation points out to an imperfect duality between  W-functions and the functions $\Psi=\{\psi_n\}_{n\in\bb{Z}_+}$ from Section~1. Recalling the $\hat{f}_n^\Phi$, the $n$th expansion coefficient in $\PPP$ and letting $\check{w}(x)=w(x)\chi_{(a,b)}(x)$, the Plancherel theorem yields at once for every $n\in\BB{Z}_+$
\begin{displaymath}
  \hat{f}_n^\Phi=\int_a^b f(x) \phi_n(x)\D x=\int_{-\infty}^\infty \sqrt{\check{w}(x)} f(x) p_n(x)\D x=(-\ii)^n \int_{-\infty}^\infty \hat{f}(\xi) \overline{\psi_n(x)}\D x,
\end{displaymath}
and we recover an expansion in $\Psi$ of the Fourier transform of $f$. This duality, though, is imperfect because, unless $(a,b)=\BB{R}$, it is valid (insofar as $\Psi$ is concerned) only in the Paley--Wiener space $\mathcal{P}_{(a,b)}(\BB{R})$ rather than in $\CC{L}_2(\BB{R})$ \cite{iserles19oss}. Moreover, comprehensive convergence theory for functions of the form $\Psi$ is also lacking. Yet, even an imperfect duality might potentially lead to useful observations.

The final issue we wish to mention is fast computation. For example, while Subsection~4.1 provides a viable approach toward the computation of $g(\DDD_N)\MM{v}$ using Krylov subspaces, ideally it would have been useful to have other fast methods, in particular for $g(z)=\ee^z$. Another critical issue is rapid expansion in a W-function basis, similarly perhaps to  fast expansion algorithms in polynomial bases \cite{olver20fau}. All this is a matter for future research.

\bibliographystyle{agsm}

\appendix
\def\theequation{\Alph{section}.\arabic{equation}}
\section{Separability coefficients for Laguerre weights}

Our starting point is the generating function
\begin{displaymath}
  \sum_{n=0}^\infty \LL_m^{(\alpha)}(x)z^n=\frac{1}{(1-z)^{1+\alpha}} \exp\!\left(\frac{xz}{z-1}\right)
\end{displaymath}
\cite[p.~202]{rainville60sf}, and we recall that 
\begin{equation}
  \label{eq:A.1}
  \int_0^\infty [\LL_m^{(\alpha)}(x)]^2x^\alpha \ee^{-x}\D x=\frac{\G(m+1+\alpha)}{m!}=\frac{\G(1+\alpha)(1+\alpha)_m}{m!}.
\end{equation}

Set
\begin{displaymath}
  q_{m,n}=-\frac12\int_0^\infty \LL_m^{(\alpha)}(x)\LL_n^{(\alpha)}(x) \frac{\D x^\alpha\ee^{-x}}{\D x}\D x,\qquad m,n\in\BB{Z}_+.
\end{displaymath}
 Because of 
 \begin{displaymath}
  \tilde{p}_m(x)=\sqrt{\frac{m!}{\G(m+1+\alpha)}} \LL_m^{(\alpha)}(x)
\end{displaymath}
and \R{eq:2.3}, it follows that
\begin{displaymath}
  \DDD_{m,n}= \sqrt{\frac{m!n!}{\G(m+1+\alpha)\G(n+1+\alpha)}} q_{m,n},\qquad m\geq n+1.
\end{displaymath}

We set
\begin{Eqnarray*}
  Q(s,t)&=&\sum_{m=0}^\infty \sum_{n=0}^\infty q_{m,n} s^m t^n\\
  &=&-\frac12 \int_0^\infty \sum_{m=0}^\infty \sum_{n=0}^\infty \LL_m^{(\alpha)}(x) \LL_n^{(\alpha)}(x) s^m t^n  (-x^\alpha +\alpha x^{\alpha-1})\ee^{-x}\D x. 
\end{Eqnarray*}
Using \R{eq:A.1} it follows that  for $|s|,|t|<1$,
\begin{Eqnarray*}
  Q(s,t)&=&\frac12 \sum_{m=0}^\infty \frac{\G(m+1+\alpha)}{m!} (st)^m \\
  &&\mbox{}-\frac{\alpha}{2(1-s)^{1+\alpha}(1-t)^{1+\alpha}} \int_0^\infty x^{\alpha-1} \exp\!\left(-x+\frac{xt}{t-1}+\frac{xs}{s-1}\right)\!\D x\\
  &=&\frac12 \G(1+\alpha) \hyper{1}{0}{1+\alpha}{\mbox{---}}{st}\\
  &&\mbox{}-\frac{\alpha}{2(1-s)^{1+\alpha}(1-t)^{1+\alpha)}} \int_0^\infty x^{\alpha-1} \exp\!\left(-\frac{(1-ts)x}{(1-s)(1-t)}\right)\!\D x\\
  &=&\frac{\G(1+\alpha)}{2} \frac{1}{(1-st)^{\alpha+1}}-\frac{\G(1+\alpha)}{2(1-s)(1-t)(1-st)^\alpha}.
\end{Eqnarray*}
(Cf. for example \cite{rainville60sf} for the definition and basic facts on hypergeometric functions.) We now  expand: all it takes is elementary (but long) algebra:
\begin{Eqnarray*}
  \frac{1}{(1-st)^{1+\alpha}}&=&\sum_{m=0}^\infty \frac{(1+\alpha)_m}{m!} (st)^m,\\
  \frac{1}{(1-s)(1-t)(1-st)^\alpha}&=&\sum_{m=0}^\infty \sum_{n=0}^\infty \left[\sum_{k=0}^{\min\{m,n\}} \frac{(\alpha)_k}{k!}\right] s^m t^n.
\end{Eqnarray*}
The following proposition can be trivially proved by induction.\footnote{And it might well be already known.}

\begin{proposition}
  \begin{displaymath}
    \sum_{k=0}^j \frac{(\alpha)_k}{k!} =\frac{(1+\alpha)_j}{j!}.
  \end{displaymath}
\end{proposition}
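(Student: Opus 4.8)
The plan is to prove the identity by induction on $j$, using nothing more than the two shift relations satisfied by the rising factorial. First I would dispose of the base case $j=0$, where both sides collapse to $(\alpha)_0/0!=1=(1+\alpha)_0/0!$, so the identity holds trivially.

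For the inductive step, assuming the claim for some $j\geq0$, I would append the $(j+1)$st term to the partial sum and invoke the induction hypothesis to write
\[
  \sum_{k=0}^{j+1} \frac{(\alpha)_k}{k!}=\frac{(1+\alpha)_j}{j!}+\frac{(\alpha)_{j+1}}{(j+1)!}.
\]
The key algebraic observation is the factorisation $(\alpha)_{j+1}=\alpha\,(1+\alpha)_j$, immediate from the definition of the Pochhammer symbol. Substituting this, I would pull out the common factor $(1+\alpha)_j$ and combine the two fractions over the denominator $(j+1)!$, obtaining
\[
  (1+\alpha)_j\!\left[\frac{1}{j!}+\frac{\alpha}{(j+1)!}\right]=\frac{(1+\alpha)_j\,(j+1+\alpha)}{(j+1)!}.
\]
Finally I would recognise $(1+\alpha)_j\,(j+1+\alpha)=(1+\alpha)_{j+1}$, again straight from the definition, which delivers the right-hand side $(1+\alpha)_{j+1}/(j+1)!$ and closes the induction.

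There is no real obstacle here: the whole argument reduces to the bookkeeping of the two relations $(\alpha)_{j+1}=\alpha\,(1+\alpha)_j$ and $(1+\alpha)_{j+1}=(j+1+\alpha)\,(1+\alpha)_j$, after which the step is a one-line combination of fractions. The footnote's remark that the result ``might well be already known'' is apt, since these shift relations make the sum telescope: one can alternatively write each summand for $k\geq1$ as the difference $\frac{(\alpha)_k}{k!}=\frac{(1+\alpha)_k}{k!}-\frac{(1+\alpha)_{k-1}}{(k-1)!}$ and sum directly, the intermediate terms cancelling to leave $1+\frac{(1+\alpha)_j}{j!}-1$. I would nonetheless present the induction, as it is the cleanest and requires no separate treatment of the $k=0$ boundary term.
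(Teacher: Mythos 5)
Your induction is correct and is exactly the route the paper takes: the paper merely remarks that the proposition ``can be trivially proved by induction,'' and your argument supplies precisely that induction, with the two shift relations $(\alpha)_{j+1}=\alpha\,(1+\alpha)_j$ and $(1+\alpha)_{j+1}=(j+1+\alpha)\,(1+\alpha)_j$ correctly applied. The telescoping variant you mention is also valid and is a pleasant bonus, but it does not change the substance.
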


We thus deduce from the definition of $Q$ that $q_{n,n}=0$ and
\begin{displaymath}
  q_{m,n}=\frac{\G(\alpha)}{2} \frac{(1+\alpha)_{n}}{n!},\qquad m\geq n+1,
\end{displaymath}
and conclude that 
\begin{equation}
  \label{eq:A.2}
  \DDD_{m,n}=\frac{\G(1+\alpha)}{2} \sqrt{\frac{m!n!}{\G(m+1+\alpha)\G(n+1+\alpha)}}  \frac{(1+\alpha)_{n}}{n!}=\frac12 \sqrt{\frac{m!\G(n+1+\alpha)}{\G(m+1+\alpha)n!}}
\end{equation}
for $m\geq n+1$, with skew-symmetric completion for $m\leq n$.

We have just determined both separability and the explicit form of the sequences $\GG{a}$ and $\GG{b}$.

\begin{theorem}
  \label{th:Laguerre}
  The Laguerre weight is separable and
  \begin{equation}
    \label{eq:A.3}
    \GG{a}_m=\sqrt{\frac{m!}{2\G(m+1+\alpha)}},\qquad \GG{b}_n=\sqrt{\frac{\G(n+1+\alpha)}{2n!}},\qquad m,n\in\BB{Z}_+.
  \end{equation}
\end{theorem}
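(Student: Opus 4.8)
The plan is to obtain an explicit closed form for the subdiagonal entries $\DDD_{m,n}$ with $m\geq n+1$ and then simply read off the factorisation required by the template \R{eq:3.1}. The starting point is the formula \R{eq:2.3}, which for the Laguerre weight $w(x)=x^\alpha\ee^{-x}$ gives $\DDD_{m,n}=-\frac12\int_0^\infty w'(x)p_m(x)p_n(x)\,\D x$, with $w'(x)=(\alpha x^{\alpha-1}-x^\alpha)\ee^{-x}$. Since the $p_n$ are the normalised Laguerre polynomials, I would strip out the normalisation constants and work with the unnormalised quantities $q_{m,n}=-\frac12\int_0^\infty w'(x)\LL_m^{(\alpha)}(x)\LL_n^{(\alpha)}(x)\,\D x$, so that $\DDD_{m,n}=\sqrt{m!n!/[\G(m+1+\alpha)\G(n+1+\alpha)]}\,q_{m,n}$ for $m\geq n+1$.

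The main engine is a bivariate generating function $Q(s,t)=\sum_{m,n}q_{m,n}s^mt^n$. Substituting the classical Laguerre generating function into both sums and splitting $w'$ into its $x^\alpha$ and $x^{\alpha-1}$ parts, the $x^\alpha$ term collapses by orthogonality \R{eq:A.1} into a single diagonal sum, while the $x^{\alpha-1}$ term reduces to a single Gamma integral $\int_0^\infty x^{\alpha-1}\exp(-(1-st)x/[(1-s)(1-t)])\,\D x$. Evaluating this integral in closed form and simplifying (using $\alpha\G(\alpha)=\G(1+\alpha)$) yields a compact expression for $Q(s,t)$: a difference of the $(1-st)^{-(1+\alpha)}$ and $[(1-s)(1-t)(1-st)^\alpha]^{-1}$ terms, each scaled by $\tfrac12\G(1+\alpha)$.

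To extract $q_{m,n}$, I would expand each piece as a double power series. The first expands cleanly via the binomial series $(1-st)^{-(1+\alpha)}=\sum_m (1+\alpha)_m(st)^m/m!$; the second needs a Cauchy-product expansion whose $s^mt^n$ coefficient is the partial sum $\sum_{k=0}^{\min(m,n)}(\alpha)_k/k!$. The key simplification is the Pochhammer identity $\sum_{k=0}^j(\alpha)_k/k!=(1+\alpha)_j/j!$, which telescopes these partial sums. Comparing coefficients then gives $q_{n,n}=0$ and $q_{m,n}=\tfrac12\G(\alpha)(1+\alpha)_n/n!$ for $m\geq n+1$, whence $\DDD_{m,n}=\tfrac12\sqrt{m!\G(n+1+\alpha)/[\G(m+1+\alpha)n!]}$ as in \R{eq:A.2}. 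The final step is purely algebraic: factor this as $\GG{a}_m\GG{b}_n$ with the stated $\GG{a},\GG{b}$, and invoke skew-symmetry of $\DDD$ (Lemma~1, valid since $w(0)=w(\infty)=0$ for $\alpha>0$) to cover $m\leq n$, exactly matching the separability template \R{eq:3.1}.

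The main obstacle I anticipate is bookkeeping rather than any conceptual difficulty: one must handle the two pieces of $w'$ correctly (in particular, the $x^\alpha$ piece contributes only on the diagonal, which the off-diagonal formula must not spuriously carry), evaluate the Gamma integral with the combined exponential rate, and then perform the two series expansions and apply the Pochhammer identity without slips. Once the closed form of $Q(s,t)$ is secured, everything downstream — including the recognition that the explicit entry factorises into a product depending separately on $m$ and on $n$ — is mechanical.
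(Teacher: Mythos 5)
Your proposal is correct and follows the paper's Appendix~A proof essentially step for step: the same reduction to the unnormalised integrals $q_{m,n}$ via \R{eq:2.3}, the same bivariate generating function $Q(s,t)$ with $w'$ split into its $x^\alpha$ (diagonal, via \R{eq:A.1}) and $x^{\alpha-1}$ (Gamma-integral) parts, the same two power-series expansions, and the same Pochhammer identity $\sum_{k=0}^{j}(\alpha)_k/k!=(1+\alpha)_j/j!$ yielding $q_{n,n}=0$ and the closed form \R{eq:A.2}, from which \R{eq:A.3} is read off. There is nothing to add.
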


\setcounter{equation}{0}
\section{Symmetric separability coefficients for ultraspherical weights}

We recall that
\begin{displaymath}
  S_{m,n}^\alpha=\int_{-1}^1 (1-x^2)^{\alpha-1} \PP_m^{(\alpha,\alpha)}(x)\PP_n^{(\alpha,\alpha)}(x)\D x,\qquad \alpha>0,
\end{displaymath}
and we are concerned with $m\geq n$ ($S_{m,n}$ is symmetric) and even $m+n$. We commence by dividing $\PP_n^{(\alpha,\alpha)}$ by $(1-x^2)$ -- it follows from the Euclidean algorithm that 
\begin{displaymath}
  \PP_n^{(\alpha,\alpha)}(x)=(1-x^2) \gamma_n(x)+\delta_n(x),
\end{displaymath}
where $\gamma_n\in\BB{P}_{n-1}$ and $\delta_n(x)=\delta_{n,0}+\delta_{n,1} x$ is linear. Because of parity, if $n$ is even then $\delta_1=0$, while if it is odd then $\delta_0=0$. The description of $\delta$ can be completed by considering $x=1$,
\begin{displaymath}
  \delta_{2n,0}=\PP_{2n}^{(\alpha,\alpha)}(1)=\frac{(1+\alpha)_{2n}}{(2n)!},\qquad \delta_{2n+1,1}=\PP_{2n+1}^{(\alpha,\alpha)}(1)=\frac{(1+\alpha)_{2n+1}}{(2n+1)!},
\end{displaymath}
therefore
\begin{displaymath}
  \delta_{2n}\equiv \frac{(1+\alpha)_{2n}}{(2n)!},\qquad \delta_{2n+1}(x)=\frac{(1+\alpha)_{2n+1}}{(2n+1)!}x.
\end{displaymath}
Since $\deg\gamma\leq n-1\leq m-1$, it follows from orthogonality that
\begin{Eqnarray*}
  S_{m,n}^\alpha&=&\int_{-1}^1 (1-x^2)^{\alpha-1} \PP_m^{(\alpha,\alpha)}(x)[(1-x^2)\gamma_n(x)+\delta_n(x)]\D x\\
  &=&\int_{-1}^1 (1-x^2)^{\alpha-1} \PP_m^{(\alpha,\alpha)}(x) \delta_n(x)\D x.
\end{Eqnarray*}
Letting
\begin{displaymath}
  \GG{e}_m=\int_{-1}^1 (1-x^2)^{\alpha-1} \PP_{2m}^{(\alpha,\alpha)}(x)\D x,\qquad \GG{o}_m=\int_{-1}^1 (1-x^2)^{\alpha-1} x \PP_{2m+1}^{(\alpha,\alpha)}(x)\D x,
\end{displaymath}
we thus have
\begin{displaymath}
  S_{2m,2n}^\alpha=\frac{(1+\alpha)_{2n}}{(2n)!} \GG{e}_m,\qquad S_{2m+1,2n+1}^\alpha=\frac{(1+\alpha)_{2n+1}}{(2n+1)!} \GG{o}_m.
\end{displaymath}

We wish to prove that
\begin{equation}
  \label{eq:B.1}
  \GG{e}_m=\frac{4^\alpha}{\alpha}\frac{\G(2m+1+\alpha)\G(1+\alpha)}{\G(2m+1+2\alpha)},\qquad \GG{o}_m=\frac{4^\alpha}{\alpha} \frac{\G(2m+2+\alpha)\G(1+\alpha)}{\G(2m+2+2\alpha)}.
\end{equation}
To this end, it is helpful to rewrite \R{eq:B.1} in the form 
\begin{equation}
  \label{eq:B.2}
  \GG{e}_m=\frac{\sqrt{\pi}}{4^m}\frac{(m+1+\alpha)_m\G(\alpha)}{\G(m+\alpha+\frac12)},\qquad \GG{o}_m=\frac{\sqrt{\pi}(m+1+\alpha)_{m+1}\G(\alpha)}{2\cdot 4^m\G(\alpha+m+\frac32)}.
\end{equation}
To prove that \R{eq:B.1} is identical to \R{eq:B.2} for $\GG{e}_m$ we commence from the latter, noting that it is the same as
\begin{displaymath}
  \GG{e}_m=\frac{\sqrt{\pi}}{4^m}\frac{\G(2m+1+\alpha)\G(\alpha)}{\G(m+\alpha+\frac12)\G(m+1+\alpha)}
\end{displaymath}
and use the Gamma duplication formula 
\begin{displaymath}
  \G(2z)=\pi^{-1/2} 2^{2z-1} \G(z)\G(z+\Frac12),\qquad z\in\BB{C}\setminus-\BB{Z}_+
\end{displaymath}
\cite[5.5.5]{dlmf}. Letting $z=m+\alpha+\frac12$, we have
\begin{displaymath}
  \G(m+\alpha+\Frac12)\G(m+1+\alpha)=\frac{\sqrt{\pi}\G(2m+1+2\alpha)}{4^{m+\alpha}}.
\end{displaymath}
and obtain \R{eq:B.1} following elementary manipulation. An identical procedure applies to $\GG{o}_m$.

Replacing $m$ by $2m$ in \R{eq:3.6} results in the recursion
\begin{equation}
  \label{eq:B.3}
  \GG{e}_{m}=\frac12 \frac{(2m+\alpha)(4m-1+2\alpha)}{m+\alpha} \GG{o}_{m-1}-\frac12 \frac{(2m-1+\alpha)(2m+\alpha)}{m+\alpha} \GG{e}_{m-1},
\end{equation}
while replacing $m$ by $2m+1$ results in 
\begin{Eqnarray*}
  \GG{o}_m&=&\frac{(2m+1+\alpha)(4m+1+2\alpha)}{2m+1+2\alpha} \int_{-1}^1 (1-x^2)^{\alpha-1} x^2\PP_{2m}^{(\alpha,\alpha)}(x)\D x\\
  &&\mbox{}-\frac{(2m+\alpha)(2m+1+\alpha)}{2m+1+2\alpha} \GG{o}_{m-1}.
\end{Eqnarray*}
Replacing $x^2=1-(1-x^2)$ and using orthogonality,
\begin{Eqnarray*}
  \int_{-1}^1 (1-x^2)^{\alpha-1} x^2\PP_{2m}^{(\alpha,\alpha)}(x)\D x&=&\int_{-1}^1 (1-x^2)^{\alpha-1}\PP_{2m}^{(\alpha,\alpha)}(x)\D x\\
  &&\mbox{}-\int_{-1}^1 (1-x^2)^\alpha \PP_{2m}^{(\alpha,\alpha)}(x)\D x=\GG{e}_m
\end{Eqnarray*}
for $m\in\BB{N}$.  Thus,
\begin{equation}
  \label{eq:B.4}
  \GG{o}_m=\frac{(2m+1+\alpha)(4m+1+2\alpha)}{2m+1+2\alpha}\GG{e}_m-\frac{(2m+\alpha)(2m+1+\alpha)}{2m+1+2\alpha}\GG{o}_{m-1}.
\end{equation}

We compute directly
\begin{displaymath}
  \GG{e}_0=\frac{\sqrt{\pi}(1+\alpha)\G(\alpha)}{\G(\alpha+\frac12)},\qquad \GG{o}_0=\frac{\sqrt{\pi}(1+\alpha)\G(\alpha)}{2\G(\alpha+\frac32)}
\end{displaymath}
(this is consistent with \R{eq:B.2} for $m=0$), whereby \R{eq:B.2} follows from \R{eq:B.3} and \R{eq:B.4} by easy induction.  We deduce that
\begin{equation}
  \label{eq:B.5}
  S_{m,n}^\alpha=\frac{4^\alpha}{\alpha} \frac{\G(m+1+\alpha)\G(n+1+\alpha)}{n!\G(m+1+2\alpha)},\qquad m\geq n,\quad m+n\mbox{ even}.
\end{equation}

\end{document}